\newcommand{\g}{\mathfrak{g}}
\newcommand{\ra}{\rightarrow}
\newcommand{\ve}{\varepsilon}
\newcommand{\vp}{\varphi}
\newcommand{\ts}{\otimes}
\newcommand{\s}{\sigma}
\newcommand{\mc}{\mathbb{K}}
\newcommand{\fb}{\mathfrak{B}}
\newcommand{\p}{\partial}
\newcommand{\wt}{\widetilde}
\newcommand{\id}{\operatorname*{id}}
\newcommand{\dd}{\operatorname*{d}}
\newcommand{\Rel}{\operatorname*{Rel}}
\newcommand{\Con}{\operatorname*{Con}}
\newcommand{\Span}{\operatorname*{span}}
\newcommand{\CL}{\operatorname*{(}}
\newcommand{\im}{\operatorname*{im}}
\newcommand{\CR}{\operatorname*{)}}
\newcommand{\ideal}{\operatorname*{ideal}}
\newcommand{\End}{\operatorname*{End}}
\newtheorem{definition}{Definition}
\newtheorem{proposition}{Proposition}
\newtheorem{theorem}{Theorem}
\newtheorem{lemma}{Lemma}
\newtheorem{corollary}{Corollary}
\newtheorem{remark}{Remark}
\newtheorem{example}{Example}
\title{Non-symmetrizable quantum groups: defining ideals and specialization}
\author{Xin FANG}
\address{D\'epartement de Math\'ematiques, B\^atiment 425, Facult\'e des Sciences d'Orsay, Universit\'e Paris-Sud, F-91405 Orsay Cedex, France.}
\email{xinfang.math@gmail.com}
\begin{document}
\maketitle
\begin{abstract}
Two generating sets of the defining ideal of a Nichols algebra of diagonal type are proposed, which are then applied to study the bar involution and the specialization problem of quantum groups associated to non-symmetrizable generalized Cartan matrices.
\end{abstract}

\section{Introduction}

\subsection{Motivations}
Quantized enveloping algebras (quantum groups) $U_q(\g)$ are constructed by V. Drinfeld and M. Jimbo in the eighties of the last century by deforming the usual enveloping algebras associated to symmetrizable Kac-Moody Lie algebras $\g$ in aim of finding solutions of the Yang-Baxter equation. They motivate numerous work in the last three decades such as pointed Hopf algebras, canonical (crystal) bases, quantum knot invariants, quiver representations and Hall algebras, (quantum) cluster algebras, Hecke algebras, quantum affine and toroidal algebras, and so on.
\par
In the original definition of a quantum group in generators and relations, the symmetrizable condition on the Cartan matrix is essential in writing down explicitly the quantized Serre relations. With this explicit expression, it is not difficult to construct a specialization map \cite{Lus88} sending the quantum parameter $q$ to $1$ to recover the enveloping algebra $U(\g)$, which is shown to be an isomorphism of Hopf algebra. It should be remarked that the well-definedness of the specialization map depends on the knowledge of the quantized Serre relations and the Gabber-Kac theorem \cite{GK81} in Kac-Moody Lie algebras.
\par
In a survey article \cite{Kas95}, M. Kashiwara asked the following question: has a crystal graph for non-symmetrizable $\g$ a meaning? He also remarked that the definition of the quantum group $U_q(\g)$ associated to an arbitrary Kac-Moody Lie algebra $\g$ is not known at that time.
\par
This problem was recently solved in the combinatorial level by Joseph and Lamprou \cite{Joseph}: they constructed the abstract crystal $\mathcal{B}(\infty)$ associated to a generalized Cartan-Borcherds matrix (not necessary symmetrizable) without passing to the quantized enveloping algebra but adopting the path model construction after Littelmann \cite{Lit95} by using the action of root operators on a sort of good paths. This construction is combinatorial and it is natural to ask for a true algebra bearing it and the globalization of these local crystals. This is the main motivation of our study on the non-symmetrizable quantum groups. This project is divided into three steps:
\begin{enumerate}
\item define the quantum group associated to a non-symmetrizable generalized Cartan matrix, study their structures, specializations and the existence of the bar involutions;
\item define the $q$-Boson algebra, its action on the negative part of this quantum group and its semi-simplicity; then define the Kashiwara operators associated to simple roots;
\item establish the local crystal structure and its globalization, compare the former with the construction of Joseph-Lamprou.
\end{enumerate}
In this paper we will tackle the first step. The second step is almost achieved, where the main tool is the construction in \cite{Fang10}, details will be given in a consecutive paper.
\par
The first functorial (coordinate-free) construction of (the positive or negative part of) the quantum group appears in the work of M. Rosso \cite{Ros95}, \cite{Ros98} with the name "quantum shuffle algebras" and then interpreted in a dual language by Andruskiewitsch and Schneider \cite{AS02} named "Nichols algebras". These constructions largely generalize the definition of the usual quantum group and can be applied in particular to the non-symmetrizable case to obtain a half of the quantum group. The quantum double construction can be then applied to combine the positive and the negative parts to yield the whole quantum group.
\par
As a summary, we can associate a Hopf algebra (the quantum double of the bosonized Nichols algebra) to a generalized Cartan matrix $C$ which is not necessary symmetrizable. It is natural to ask whether there exists a specialization map from this Hopf algebra to the enveloping algebra of the Kac-Moody Lie algebra $\g(C)$ associated to $C$: this is not easy since in general, both the Nichols algebra and the Kac-Moody Lie algebra do not admit explicit presentations by generators and relations.
\par
The goal of this paper is twofold: on one hand, tackling the specialization problem in the non-symmetrizable case by the study of the defining ideal of the corresponding Nichols algebra; on the other side, defining the bar involution in the non-symmetrizable case. As a byproduct, we get an estimation on the size of the defining ideal.

\subsection{Defining ideals in Nichols algebras}
Let $(V,\s)$ be a braided vector space. The tensor algebra $T(V)$ admits a braided Hopf algebra structure by imposing a coproduct making elements in $V$ primitive; it can be then extended to the entire $T(V)$ in replacing the usual flip by the braiding.
\par
If the braiding $\s$ arises from an $H$-Yetter-Drinfel'd module structure on $V$ over a Hopf algebra $H$, the Nichols algebra can be defined as the quotient of $T(V)$ by some maximal ideal and coideal $\mathfrak{I}(V)$ contained in the set of elements of degree no less than $2$. We call $\mathfrak{I}(V)$ the defining ideal of the Nichols algebra $\mathfrak{B}(V)$.
\par
As an example, for a symmetrizable Kac-Moody Lie algebra $\g$, the negative part $U_q^-(\g)$ of the corresponding quantum group is a Nichols algebra, in which case the defining ideal $\mathfrak{I}(V)$ is generated as a Hopf ideal by quantized Serre relations. In general, it is very difficult to find out a minimal generating set of $\mathfrak{I}(V)$ as a Hopf ideal in $T(V)$. 
\par
In \cite{A03}, Andruskiewitsch asked some questions which guide the researches of this domain and the following ones concerning defining ideals appear therein:
\begin{enumerate}
\item For those $\mathfrak{B}(V)$ having finite Gelfan'd-Kirillov dimension, decide a minimal generating set of $\mathfrak{I}(V)$.
\item When is the ideal $\mathfrak{I}(V)$ finitely generated?
\end{enumerate}

The first general result on the study of the defining ideal is due to M. Rosso \cite{Ros98} and P. Schauenburg \cite{Sbg96}: they characterize it as the kernel of the total symmetrization operator. Recently, for Nichols algebras of diagonal type with finite root system, a minimal generating set of the defining ideal is found by I. Angiono. In this case, the corresponding Lyndon words and their symmetries (Lusztig's isomorphisms) \cite{Hec06} play an essential role.
\par
In \cite{Fang11}, we proposed the notion of "level $n$" elements with the help of a decomposition of the total symmetrization operators in the braid groups and proved their primitivity. These elements could be easily computed and the degrees where they appear are strongly restricted. This construction demands no concrete restriction on the braiding hence quite general, but we must pay the price that they may not generate the defining ideal.
\par
Once restricted to the diagonal case where the braiding is a twist by scalars of the usual flip, with some modifications on the conditions posed on "level $n$" elements, we obtain a generating set formed by some "pre-relations".

\subsection{Main ideas and results}
The main part of this paper is devoted to propose some methods to study a slightly modified version of the above problems. First, we will restrict ourselves to the infinite dimensional Nichols algebras of diagonal type having not necessarily finite Gelfan'd-Kirillov dimensions. Second, our principle has a pragmatic feature: we do not always desire a minimal generating set of the defining ideal but are satisfied with finding generating subsets fitting for solving concrete problems.
\par
We propose four subsets of the defining ideal $\mathfrak{I}(V)$: left and right constants, left and right pre-relations. The first two sets are defined as the intersection of kernels of left and right differential operators and the last two are their subsets obtained by selecting elements which are contained into the images of the Dynkin operators. Two main results (Theorem \ref{Thm:1} and Theorem \ref{Relr}) of this paper state that both are generators of the defining ideal.
\par
These results are then applied to the study of the specialization problem. In general, if the generalized Cartan matrix $C$ is not symmetrizable, we show in a counterexample that the natural specialization map may not be well-defined. Therefore in our approach, the first step is to pass to a symmetric matrix $\overline{C}$ by taking the average of the Cartan matrix. A result due to Andruskiewitsch and Schneider ensures that this procedure does not lose too much information.
\par
Once passed to the averaged matrix, we prove in Theorem \ref{Thm:3} that the specialization map $U_q(\overline{C})\ra U(\g(\overline{C}))$ is well-defined and is surjective. 
\par
As another application, we relate the degrees where pre-relations may appear with integral points of some quadratic forms arising from the action of the centre of the braid group. This allows us 
\begin{enumerate}
\item to reprove some well-known results in a completely different way which we hope could shed light on the finite generation problem of $\mathfrak{I}(V)$;
\item to explain that the set of left and right pre-relations are not too large.
\end{enumerate}

\subsection{Constitution of this paper}

After giving some recollections on Nichols algebras and braid groups in Section \ref{Sec5.2} and \ref{Sec5.3}, we define the constants and pre-relations in Section \ref{Sec5.4} and \ref{Sec5.5} and show that they are indeed generating sets. These results are then applied to study the specialization problem in Section \ref{Sec5.6} and \ref{Sec5.7}. Another application to the finitely generating property is given in Section \ref{Sec5.8}.

\subsection{Acknowledgements}
This paper is extracted (with revision) from the author's Ph.D thesis, supervised by Professor Marc Rosso, to whom I would like to express my sincere gratitudes. I would like to thank the referee for the valuable comments and suggestions to improve this paper.

\section{Recollections on Nichols algebras}\label{Sec5.2}

Let $\mc$ be a field of characteristic $0$ and $\mc^\times=\mc\backslash\{0\}$. All algebras and vector spaces, if not specified otherwise, are over the field $\mc$.

\subsection{Nichols algebras}

Let $H$ be a Hopf algebra and ${}^H_H\mathcal{YD}$ be the category of $H$-Yetter-Drinfel'd modules. The category ${}^H_H\mathcal{YD}$ is a braided category: for any $V,W\in{}^H_H\mathcal{YD}$, we let $\s_{V,W}:V\ts W\ra W\ts V$ denote the braiding. With this notation, $(V,\s_{V,V})$ is a braided vector space. Readers unfamiliar with these constructions are sent to \cite{AS02} for a survey.

\begin{definition}[\cite{AS02}]
A graded braided Hopf algebra $R=\bigoplus_{n=0}^\infty R(n)$ is called a Nichols algebra of $V\in{}_H^H\mathcal{YD}$ if 
\begin{enumerate}
\item $R(0)\cong \mc$, $R(1)\cong V$;
\item $R$ is generated as an algebra by $R(1)$;
\item $R(1)$ is the set of all primitive elements of $R$.
\end{enumerate}
We let $\mathfrak{B}(V)$ denote this braided Hopf algebra.
\end{definition}

The Nichols algebra $\mathfrak{B}(V)$ can be realized concretely as a quotient of the braided tensor Hopf algebra $T(V)$.

\begin{remark}[\cite{AS02}]\label{example}
Let $V\in{}_H^H\mathcal{YD}$ be an $H$-Yetter-Drinfel'd module. There exists a braided tensor Hopf algebra structure on the tensor space 
$$T(V)=\bigoplus_{n=0}^\infty V^{\ts n}.$$
\begin{enumerate}
\item The multiplication on $T(V)$ is given by the concatenation.
\item The coalgebra structure is defined on $V$ by: for any $v\in V$, $\Delta(v)=v\ts 1+1\ts v$, $\ve(v)=0$. Then it can be extended to the whole $T(V)$ by the universal property of $T(V)$ as an algebra.
\end{enumerate}
For $k\geq 2$, let $T^{\geq k}(V)=\bigoplus_{n\geq k}V^{\ts n}$
and $\mathfrak{I}(V)$ be the maximal coideal of $T(V)$ contained in $T^{\geq 2}(V)$: it is also a two-sided ideal (\cite{AS02}); the Nichols algebra $\mathfrak{B}(V)$ associated to $V$ is isomorphic to $T(V)/\mathfrak{I}(V)$ as a braided Hopf algebra. We let $S$ denote the convolution inverse of the identity map on $\mathfrak{B}(V)$.
\end{remark}

\begin{remark}
The construction of a Nichols algebra $\mathfrak{B}(V)$ is still valid when $(V,\s)$ is a braided vector space.
\end{remark}

\subsection{Nichols algebras of diagonal type}

\begin{definition}[\cite{AS02}]
The Nichols algebra $\mathfrak{B}(V)$ associated to a braided vector space $(V,\s)$ is called of diagonal type if there exists a basis $\{v_1,\cdots,v_N\}$ of $V$ and a matrix $(q_{ij})_{1\leq i,j\leq N}\in M_N(\mc^\times)$ of non-zero scalars such that for any $1\leq i,j\leq N$, $\s(v_i\otimes v_j)=q_{ij} v_j\otimes v_i$. The scalar matrix is called the braiding matrix of $(V,\s)$.
\end{definition}

In the situation of Remark \ref{example}, we will abuse to say that $T(V)$ is of diagonal type if $\mathfrak{B}(V)$ is so.
\par
The following example of the Nichols algebra of diagonal type is the main object we will study in this paper. Let $G=\mathbb{Z}^N$ be the additive group, $H=\mc[G]$ be its group algebra and $\widehat{G}$ be the character group of $G$. Let $V\in{}_H^H\mathcal{YD}$ be an $H$-Yetter-Drinfel'd module of dimension $N$; it admits a decomposition into linear subspaces $V=\bigoplus_{g\in G}V_g$ where $V_g=\{v\in V|\ \delta(v)=g\ts v\}$, here $\delta:V\ra H\ts V$ is the comodule structure map; moreover, there exist a basis $\{v_1,\cdots,v_N\}$ of $V$,  elements $g_1,\cdots,g_N\in G$ and characters $\chi_1,\cdots,\chi_N\in\widehat{G}$ such that $v_i\in V_{g_i}$ and for any $g\in G$,
$$g.v_i=\chi_i(g)v_i.$$
\par
In this case the braiding $\s_{V,V}$ has the following explicit form: for $1\leq i,j\leq N$,
$$\sigma_{V,V}(v_i\ts v_j)=\chi_j(g_i)v_j\ts v_i.$$
Therefore the Nichols algebra associated to $(V,\s_{V,V})$ is of diagonal type with braiding matrix $(q_{ij})_{1\leq i,j\leq N}=(\chi_j(g_i))_{1\leq i,j\leq N}\in M_N(\mc^\times)$.
\par
For an arbitrary matrix $A=(q_{ij})\in M_N(\mc^\times)$, we let  $\mathfrak{B}(V_A)$ denote the Nichols algebra associated to the $H$-Yetter-Drinfel'd module $V$ of diagonal type with the braiding matrix $A$. If the matrix $A$ under consideration is fixed, we denote it by $\mathfrak{B}(V)$ for short.
\par
From now on let $I=\{1,\cdots,N\}$ denote the index set.

\subsection{Differential operators}\label{section:diff}
Let $V\in{}_H^H\mathcal{YD}$ be an $H$-Yetter-Drinfel'd module of diagonal type and $\{v_1,\cdots,v_N\}$ be the basis of $V$ as fixed in the last subsection.

\begin{definition}[\cite{KRT94}]
Let $A$ and $B$ be two Hopf algebras with invertible antipodes. A
generalized Hopf pairing between $A$ and $B$ is a bilinear form
$\vp:A\times B\ra \mc$ such that:
\begin{enumerate}
\item For any $a\in A$, $b,b'\in B$,
$\vp(a,bb')=\sum\vp(a_{(1)},b)\vp(a_{(2)},b')$;
\item For any $a,a'\in A$, $b\in B$,
$\vp(aa',b)=\sum\vp(a,b_{(2)})\vp(a',b_{(1)})$;
\item For any $a\in A$, $b\in B$,
$\vp(a,1)=\ve(a),\ \ \vp(1,b)=\ve(b)$.
\end{enumerate}
\end{definition}

Let $\vp$ be a generalized Hopf pairing on $T(V)$ and itself such that $\vp(v_i,v_j)=\delta_{ij}$ (Kronecker delta notation). This pairing is not necessarily non-degenerate, whose radical is the defining ideal $\mathfrak{I}(V)$; it may pass to the quotient to give a non-degenerate generalized Hopf pairing on $\mathfrak{B}(V)$ (see, for example, Section 3.2 in \cite{AG99} for details).

\begin{definition}[\cite{MS}, Proposition 2.4; \cite{AHS}, Section 2.1; \cite{Fang11}, Definition 14]
The left and right differential operators associated to the element $a\in T(V)$ are defined by: 
\begin{enumerate}
\item $\p_a^L:T(V)\ra T(V)$, $\p_a^L(x)=\sum \vp(a,x_{(1)})x_{(2)}$;
\item $\p_a^R:T(V)\ra T(V)$, $\p_a^R(x)=\sum x_{(1)}\vp(a,x_{(2)})$.
\end{enumerate}
If $a=v_i$ for some $i\in I$, they will be denoted by $\p_i^L$ and $\p_i^R$, respectively.
\end{definition}

These differential operators descend to give endomorphisms of $\mathfrak{B}(V)$, which will also be denoted by $\p_a^L$ and $\p_a^R$.
\par
The following lemma, whose proof is trivial, will be useful. It holds when $T(V)$ is replaced by $\mathfrak{B}(V)$.
\begin{lemma}\label{Lemma:comm}
\begin{enumerate}
\item For any $a,x\in T(V)$, we have: 
$$\Delta(\p_a^L(x))=\sum\p_a^L(x_{(1)})\ts x_{(2)},\,\,\, \Delta(\p_a^R(x))=\sum x_{(1)}\ts \p_a^R(x_{(2)}).$$
\item For any $a,b\in T(V)$, $\p_a^L\p_b^R=\p_b^R\p_a^L$.
\end{enumerate}
\end{lemma}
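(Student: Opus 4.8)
The plan is to prove both assertions by a direct computation in Sweedler notation using coassociativity of $\Delta$ on $T(V)$; since the operators $\p_a^L,\p_a^R$ and the coproduct all descend to $\mathfrak{N}(V)$, the identities for $\mathfrak{N}(V)$ follow at once from those for $T(V)$, so it suffices to work in $T(V)$. Throughout I write $\Delta^{(2)}(x)=\sum x_{(1)}\ts x_{(2)}\ts x_{(3)}$ for the iterated coproduct, and I use repeatedly that the values of $\vp$ are scalars and hence may be moved freely past tensor factors.

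For the first assertion, apply $\Delta$ to $\p_a^L(x)=\sum\vp(a,x_{(1)})x_{(2)}$ and use coassociativity to get
$$\Delta(\p_a^L(x))=\sum\vp(a,x_{(1)})\,x_{(2)}\ts x_{(3)}.$$
On the other hand, expanding $\p_a^L$ on the first leg of $\Delta(x)=\sum x_{(1)}\ts x_{(2)}$ and invoking coassociativity once more,
$$\sum\p_a^L(x_{(1)})\ts x_{(2)}=\sum\vp\big(a,(x_{(1)})_{(1)}\big)(x_{(1)})_{(2)}\ts x_{(2)}=\sum\vp(a,x_{(1)})\,x_{(2)}\ts x_{(3)},$$
which is the same expression. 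The identity for $\p_a^R$ is proved symmetrically, expanding on the last tensor leg instead.

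For the second assertion, the cleanest route is to deduce it from the first part. Observe that $\p_b^R=(\id\ts\vp(b,-))\circ\Delta$ as maps $T(V)\ra T(V)$. Hence, using the first part,
$$\p_b^R\big(\p_a^L(x)\big)=(\id\ts\vp(b,-))\,\Delta(\p_a^L(x))=(\id\ts\vp(b,-))\sum\p_a^L(x_{(1)})\ts x_{(2)}=\sum\p_a^L(x_{(1)})\,\vp(b,x_{(2)}),$$
while directly from the definitions
$$\p_a^L\big(\p_b^R(x)\big)=\p_a^L\Big(\sum x_{(1)}\,\vp(b,x_{(2)})\Big)=\sum\vp(b,x_{(2)})\,\p_a^L(x_{(1)}),$$
since $\vp(b,x_{(2)})$ is a scalar. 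These two expressions coincide. (Alternatively, one unwinds both sides fully using $\Delta^{(2)}$ and checks that each equals $\sum\vp(a,x_{(1)})\,x_{(2)}\,\vp(b,x_{(3)})$.)

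There is no genuine obstacle here: the entire content is careful bookkeeping of Sweedler indices under coassociativity, together with the observation that $\p_a^L$ and $\p_b^R$ act on disjoint tensor legs of $\Delta^{(2)}$. The only point worth flagging explicitly is the reduction to $T(V)$ noted at the start, which relies on the fact recorded above that $\mathfrak{I}(V)$ is a coideal and is stable under the differential operators, so that passage to $\mathfrak{N}(V)=T(V)/\mathfrak{I}(V)$ is harmless.
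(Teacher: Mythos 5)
Your proof is correct, and it is exactly the routine Sweedler-notation computation that the paper has in mind when it declares the lemma's proof ``trivial'' and omits it. Both identities reduce to coassociativity of the (braided) coproduct together with the fact that $\vp$ takes scalar values, and your reduction of part (2) to part (1) and your remark on descending to $\mathfrak{N}(V)$ are both sound.
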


\section{Identities in braid groups}\label{Sec5.3}
\subsection{Braid groups}
Let $n\geq 2$ be an integer, $\mathfrak{S}_n$ be the symmetric group: it acts on an alphabet of $n$ letters by permuting their positions and is generated by the set of transpositions $\{s_i=(i,i+1)|\ 1\leq i\leq n-1\}$. 
\par
Let $\mathfrak{B}_n$ be the braid group of $n$ strands. It is defined by generators $\s_i$ for $1\leq i\leq n-1$ and relations:
$$\s_i\s_j=\s_j\s_i,\ \ \text{for}\ |i-j|\geq 2;\ \ \s_i\s_{i+1}\s_i=\s_{i+1}\s_i\s_{i+1},\ \ \text{for}\ 1\leq i\leq n-2.$$ 
Let $\sigma=s_{i_1}\cdots s_{i_r}\in\mathfrak{S}_n$ be a reduced expression of $\sigma$. We denote the corresponding lifted element $T_\sigma=\s_{i_1}\cdots\s_{i_r}\in\mathfrak{B}_n$. This gives a linear map $T: \mc[\mathfrak{S}_n]\ra \mc[\mathfrak{B}_n]$ called the Matsumoto-Tits section.

\subsection{Defining ideals}
The total symmetrization operator in $\mc[\mathfrak{B}_n]$ is defined by:
$$S_n=\sum_{\sigma\in\mathfrak{S}_n}T_\sigma\in \mc[\fb_n].$$
Since $V\in{}^H_H\mathcal{YD}$ admits a braiding $\s$, $\fb_n$ acts naturally on $V^{\ts n}$ via $\s_i\mapsto {\id}^{\ts (i-1)}\ts\s\ts{\id}^{\ts(n-i-1)}$, which allows us to look $S_n$ as an endomorphism of $V^{\ts n}$.

\begin{proposition}[\cite{Ros98}, Proposition 9; \cite{Sbg96}, Corollary 2.4 and Theorem 2.7; \cite{AG99}, Proposition 3.2.12]\label{schauenburg}
Let $V$ be an $H$-Yetter-Drinfel'd module. Then
$$\mathfrak{B}(V)=\bigoplus_{n\geq 0} \left(V^{\ts n}/\ker(S_n)\right).$$
\end{proposition}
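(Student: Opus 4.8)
The plan is to realize both $\mathfrak{I}(V)$ and $\bigoplus_{n\geq 0}\ker(S_n)$ as the kernel of one and the same morphism out of $T(V)$, namely the canonical map into the quantum shuffle algebra. First I would equip the graded vector space $\bigoplus_{n\geq 0}V^{\ts n}$ with a \emph{second} braided Hopf algebra structure $T^c(V)$: as a coalgebra it is the cofree (deconcatenation) coalgebra, while the product is the braided (``quantum'') shuffle product obtained from $\s$ by inserting the braiding each time two letters have to cross. One verifies --- exactly as in the classical case, now invoking the braid relations for $\s$ and the Yetter--Drinfel'd axioms for the $H$-coaction --- that $T^c(V)$ is again an object of ${}^H_H\mathcal{YD}$ and a graded connected braided Hopf algebra. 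Since $T(V)$ is the free algebra on $V$, the inclusion $V\hookrightarrow T^c(V)$ extends uniquely to an algebra map $\Psi\colon T(V)\ra T^c(V)$; being an algebra map between braided bialgebras which restricts to a coalgebra map on the generating space $V$ (both sides make $V$ primitive), $\Psi$ is automatically a morphism of braided Hopf algebras, and it is graded.

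Next I would identify the degree-$n$ component $\Psi_n\colon V^{\ts n}\ra V^{\ts n}$ with $S_n$. Since $\Psi$ is multiplicative and is the identity on $V$, it sends $v_{i_1}\cdots v_{i_n}$ to the $n$-fold iterated quantum shuffle product of the one-letter words $v_{i_j}$. The quantum shuffle of a word of length $n-1$ against a single letter is precisely the operator $\id+\s_{n-1}+\s_{n-1}\s_{n-2}+\cdots+\s_{n-1}\cdots\s_1$ on $V^{\ts n}$, so iterating this insertion reproduces exactly the recursion $S_n=(S_{n-1}\ts\id)(\id+\s_{n-1}+\cdots+\s_{n-1}\cdots\s_1)$ for the total symmetrization operator in $\mc[\fb_n]$. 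Hence $\Psi_n=S_n$ for every $n$, and therefore $\ker\Psi=\bigoplus_{n\geq 0}\ker(S_n)$.

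It remains to prove $\ker\Psi=\mathfrak{I}(V)$, after which the statement follows by combining the two identifications. As the kernel of the braided Hopf algebra map $\Psi$, $\ker\Psi$ is an ideal and a coideal; it lies in $T^{\geq 2}(V)$ because $\Psi$ is the identity in degrees $0$ and $1$; hence $\ker\Psi\subseteq\mathfrak{I}(V)$ by the very definition of $\mathfrak{I}(V)$ as the maximal coideal inside $T^{\geq 2}(V)$. For the reverse inclusion I would show that the induced graded surjection $\ol{\Psi}\colon T(V)/\ker\Psi\twoheadrightarrow T(V)/\mathfrak{I}(V)=\mathfrak{N}(V)$, which is the identity on $V$, is injective. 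Its source is the braided sub-bialgebra $\im\Psi\subseteq T^c(V)$; it has degree-zero part $\mc$ and degree-one part $V$, it is generated in degree $1$ (being generated by $\Psi(V)=V$), and its primitive elements are exactly $V$, since the coproduct of $T^c(V)$ is deconcatenation --- whence $\mathrm{Prim}(T^c(V))=V$ --- and primitives pass to sub-coalgebras. A minimal-degree argument then forces $\ker\ol{\Psi}=0$: a nonzero homogeneous element of $\ker\ol{\Psi}$ of least degree $n$ would, after applying $\ol{\Psi}\ts\ol{\Psi}$ (injective in degrees $<n$) to its reduced coproduct, be primitive in $\im\Psi$ of degree $n\geq 2$ --- impossible. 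Thus $\ker\Psi=\mathfrak{I}(V)$ and
$$\mathfrak{N}(V)=T(V)/\mathfrak{I}(V)=T(V)/\ker\Psi=\bigoplus_{n\geq 0}\bigl(V^{\ts n}/\ker(S_n)\bigr).$$

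The only genuinely non-formal part is the bundle of checks in the first two steps: that the braided shuffle product is associative and compatible with the deconcatenation coproduct --- this is where the braid relations and the Yetter--Drinfel'd condition actually enter --- and the combinatorial matching of the iterated shuffle product with the braid-group recursion for $S_n$. Once those are in place, the last step is routine formal manipulation with graded connected braided Hopf algebras.
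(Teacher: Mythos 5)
The paper offers no proof of this proposition: it is quoted directly from Rosso and Schauenburg, so there is nothing in the text to compare against line by line. Your proposal is, in substance, the standard argument from those references: build the quantum shuffle algebra $T^c(V)$, extend $\id_V$ to a graded braided Hopf algebra map $\Psi\colon T(V)\ra T^c(V)$, identify $\Psi_n$ with $S_n$, and then pin down $\ker\Psi=\mathfrak{I}(V)$ by maximality of $\mathfrak{I}(V)$ in one direction and by the absence of primitives of degree $\geq 2$ in $\im\Psi\subseteq T^c(V)$ (a minimal-degree argument in a graded connected braided Hopf algebra) in the other. That skeleton is correct, and the first and third steps are carried out properly.

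The one step where your write-up is internally inconsistent is the identification $\Psi_n=S_n$. Multiplying a word of length $n-1$ by a single letter computes $\Psi_n$ as $(\text{insertion operator})\circ(\Psi_{n-1}\ts\id)$, i.e.\ the insertion acts \emph{after} $S_{n-1}\ts\id$; but the factorization you invoke, $S_n=(S_{n-1}\ts\id)\circ T_n$ with $T_n=\id+\s_{n-1}+\s_{n-1}\s_{n-2}+\cdots+\s_{n-1}\cdots\s_1$, has $T_n$ acting first, and $T_n$ does not commute with $S_{n-1}\ts\id$ in $\mc[\fb_n]$ (already $T_3T_2\neq T_2T_3=S_3$: the former contains $\s_2\s_1^2$, which is not a Matsumoto--Tits lift). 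So as written the recursion you display is not what the shuffle product produces. The repair is a matter of conventions rather than of ideas: with the braiding inserted as the new letter crosses leftward, the one-letter insertion operator is $\id+\s_{n-1}+\s_{n-2}\s_{n-1}+\cdots+\s_1\cdots\s_{n-1}$ (the reversed words), which is the Matsumoto--Tits lift of a set of minimal-length coset representatives of $\mathfrak{S}_{n-1}$ in $\mathfrak{S}_n$ sitting on the correct side, and additivity of lengths then yields $\Psi_n=S_n$ by induction; alternatively one can iterate the product from the other end and recover the paper's factorization $S_n=T_2\cdots T_n$ (or $U_2\cdots U_n$) verbatim. You flagged this combinatorial matching as a point to check, so fix the side of the coset decomposition and the crossing convention before asserting it; everything else goes through.
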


Details of this proposition and some different characterizations of the defining ideal can be found in \cite{AG99}.
\par
By this proposition, $\mathfrak{B}(V)$ can be viewed as imposing relations in $T(V)$ which are annihilated by the total symmetrization map, locating defining relations of $\mathfrak{B}(V)$ can be reduced to the study of each subspace $\ker(S_n)$. 

\subsection{Particular elements in braid groups and their relations}

We start by introducing some particular elements in the group algebra of braid groups. 
\begin{definition}
Let $n\geq 2$ be an integer. We define the following elements in $\mc[\mathfrak{B}_n]$:
$$\text{Garside element:       }\Delta_n=(\sigma_1\cdots\sigma_{n-1})(\sigma_1\cdots\sigma_{n-2})\cdots(\sigma_1\sigma_2)\sigma_1;$$
$$\text{Central element:       }\theta_n=\Delta_n^2;$$
$$\text{Right differential element:     }T_n=1+\s_{n-1}+\s_{n-1}\s_{n-2}+\cdots+\s_{n-1}\s_{n-2}\cdots\s_1;$$
$$\text{Right Dynkin element:       }P_{n}=(1-\s_{n-1}\s_{n-2}\cdots\s_1)(1-\s_{n-1}\s_{n-2}\cdots\s_{2})\cdots(1-\s_{n-1});$$
$$T_n'=(1-\s_{n-1}^2\s_{n-2}\cdots\s_1)(1-\s_{n-1}^2\s_{n-2}\cdots\s_2)\cdots(1-\s_{n-1}^2);$$
$$\text{Left differential element:     }U_n=1+\s_{1}+\s_{1}\s_{2}+\cdots+\s_{1}\s_{2}\cdots\s_{n-1};$$
$$\text{Left Dynkin element:       }Q_{n}=(1-\s_{1}\s_{2}\cdots\s_{n-1})(1-\s_{1}\s_{2}\cdots\s_{n-2})\cdots(1-\s_{1});$$
$$U_n'=(1-\s_{1}^2\s_{2}\cdots\s_{n-1})(1-\s_{1}^2\s_{2}\cdots\s_{n-2})\cdots(1-\s_{1}^2).$$
\end{definition}

We give a summary for some known results on the relations between them:

\begin{proposition}[\cite{Fang11}, \cite{KT08}]\label{braididentity}
The following identities hold:
\begin{enumerate}
\item For $n\geq 3$, $Z(\fb_n)$, the centre of $\fb_n$, is generated by $\theta_n$;
\item For any $1\leq i\leq n-1$, $\sigma_i\Delta_n=\Delta_n\s_{n-i}$;
\item $\theta_n=\Delta_n^2=(\s_{n-1}\s_{n-2}\cdots\s_1)^n=(\s_{n-1}^2\s_{n-2}\cdots\s_1)^{n-1}$;
\item $\left(\sum_{k=0}^{n-2}(\s_{n-1}^2\s_{n-2}\cdots\s_1)^k\right)(1-\s_{n-1}^2\s_{n-2}\cdots\s_1)=1-\Delta_n^2=1-\theta_n$;
\item $S_n=T_2T_3\cdots T_n=U_2U_3\cdots U_n$;
\item $T_nP_n=T_n'$,\, $U_nQ_n=U_n'$.
\end{enumerate}
\end{proposition}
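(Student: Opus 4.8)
The plan is to verify the six identities essentially one at a time, since each is a statement purely about the group algebra $\mc[\fb_n]$ and can be attacked by elementary manipulation of the braid relations together with the defining formulas for $\Delta_n$, $\theta_n$, $T_n$, $P_n$, etc. For (1) I would not reprove it: the fact that $Z(\fb_n)$ is infinite cyclic generated by the full twist $\theta_n=\Delta_n^2$ is classical (Chow; see \cite{KT08}), so the role here is merely to record it. For (2), the half-twist $\Delta_n$ is the Garside element, and the identity $\s_i\Delta_n=\Delta_n\s_{n-i}$ expresses the fact that conjugation by $\Delta_n$ realizes the flip automorphism of the Dynkin diagram $A_{n-1}$; I would prove it by induction on $n$ using the standard factorization $\Delta_n=(\s_1\cdots\s_{n-1})\Delta_{n-1}$ (with $\Delta_{n-1}$ acting on strands $1,\dots,n-1$), or simply cite it. Identity (2) immediately gives that $\theta_n=\Delta_n^2$ is central, reproving part of (1).

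For (3), the first equality is the definition of $\theta_n$; the two remaining ones, $\Delta_n^2=(\s_{n-1}\cdots\s_1)^n=(\s_{n-1}^2\s_{n-2}\cdots\s_1)^{n-1}$, are again classical expressions of the full twist, provable by induction on $n$: one shows $(\s_{n-1}\cdots\s_1)^n$ is central (it commutes with each $\s_i$ by a direct braid-relation computation, pushing the cycle past the generator), hence equals a power of $\theta_n$, and then compares lengths or images in $\mathfrak{S}_n$ to fix the power. The identity involving $\s_{n-1}^2\s_{n-2}\cdots\s_1$ follows from a similar argument, or by relating this element to $\s_{n-1}\cdot(\s_{n-1}\cdots\s_1)\cdot(\ )$ and telescoping. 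Identity (4) is then purely formal: it is the factorization $(1-x^{m})=\bigl(\sum_{k=0}^{m-1}x^k\bigr)(1-x)$ applied to $x=\s_{n-1}^2\s_{n-2}\cdots\s_1$ and $m=n-1$, using $x^{n-1}=\theta_n$ from (3); no braid-group input is needed beyond (3).

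Identity (5), $S_n=T_2T_3\cdots T_n=U_2U_3\cdots U_n$, is the decomposition of the total symmetrizer and is the combinatorial heart of the statement. I would prove $S_n=S_{n-1}T_n$ by induction: every $\sigma\in\mathfrak{S}_n$ factors uniquely as $\sigma'\cdot c_j$ where $\sigma'\in\mathfrak{S}_{n-1}$ fixes $n$ and $c_j=s_{n-1}s_{n-2}\cdots s_j$ sends $n$ to position $j$ (with $c_n=1$), and this factorization is length-additive, so the Matsumoto--Tits section is multiplicative on it: $T_\sigma=T_{\sigma'}T_{c_j}$. Summing over $j$ gives $\sum_\sigma T_\sigma=\bigl(\sum_{\sigma'}T_{\sigma'}\bigr)\bigl(\sum_j \s_{n-1}\cdots\s_j\bigr)=S_{n-1}T_n$, and induction finishes the right-hand factorization; the left-hand one is the mirror argument with the cycles $s_1s_2\cdots s_j$. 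Finally (6), $T_nP_n=T_n'$ and $U_nQ_n=U_n'$, I would obtain by expanding the telescoping products: writing $a_j=\s_{n-1}\s_{n-2}\cdots\s_j$ one checks $T_n=\sum_{j}$ of partial products and $P_n=\prod_{j=1}^{n-1}(1-a_j)$, and multiplying out shows term-by-term (again by induction on $n$, peeling off the outermost factor $(1-a_1)$ and using $T_n=1+\s_{n-1}T_{n-1}'$-type recursions) that the product equals $\prod_j(1-a_j^{\,2}\text{-shifted})$; the key cancellation is that $a_j(1-a_{j'})$ for $j<j'$ reorganizes into a term with $a_j^2$.

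The main obstacle is identity (5) together with the recursive bookkeeping needed for (6): one must be careful that the coset decomposition of $\mathfrak{S}_n$ is genuinely length-additive so that the Matsumoto--Tits section (which is only linear, not an algebra map, on all of $\mc[\mathfrak{S}_n]$) is multiplicative on the relevant products. Everything else is either classical and citable (1)--(3) or a formal consequence (4), and (6) is a determined-by-induction computation once the pattern from (5) is in hand.
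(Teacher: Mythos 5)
The paper does not actually prove this proposition: it is stated as ``a summary for some known results'' and the proof consists of the citations to \cite{Fang11} and \cite{KT08}, so there is no argument in the paper to compare yours against line by line; I judge the proposal on its own merits. Items (1)--(4) are handled correctly: (1)--(3) are classical, and your derivation of (4) as the formal factorization $1-x^{n-1}=\bigl(\sum_{k=0}^{n-2}x^k\bigr)(1-x)$ with $x=\s_{n-1}^2\s_{n-2}\cdots\s_1$ is exactly right (to pin down the power of $\theta_n$ in (3) you need the abelianization $\fb_n\to\mathbb{Z}$ rather than the image in $\mathfrak{S}_n$, which is trivial for every element of the pure braid group; your ``lengths'' covers this). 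Your proof of (5) via the length-additive coset factorization $\sigma=\sigma'c_j$ and the multiplicativity of the Matsumoto--Tits section on reduced products is complete and correct. One point you should make explicit: in $S_n=U_2U_3\cdots U_n$ the factor $U_k$ must be embedded into the \emph{last} $k$ strands (via the paper's $\iota_k^n$); with the naive embedding into the first $k$ strands the identity already fails for $n=3$, since $(1+\s_1)(1+\s_1+\s_1\s_2)$ contains $\s_1^2$. Your mirror argument produces the correct embedding only if the parabolic subgroup is taken to be $\langle s_2,\dots,s_{n-1}\rangle$ rather than $\langle s_1,\dots,s_{n-2}\rangle$.

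The genuine gap is in (6), the only item not available in \cite{KT08} and the one carrying real content (it is the computation from \cite{Fang11}). The recursion you invoke, ``$T_n=1+\s_{n-1}T_{n-1}'$-type'', is false as written --- the elementary recursion is $T_n=1+\s_{n-1}T_{n-1}$, which does not involve $T_{n-1}'$ and does not close the induction --- and the asserted ``key cancellation'' is never established, so as it stands (6) is not proved. The missing lemma is the commutation rule $(\s_{n-1}\cdots\s_j)(\s_{n-1}\cdots\s_1)=(\s_{n-1}^2\s_{n-2}\cdots\s_1)(\s_{n-1}\cdots\s_{j+1})$ for $1\leq j\leq n-1$, which follows by repeatedly applying $\s_k(\s_{n-1}\cdots\s_1)=(\s_{n-1}\cdots\s_1)\s_{k+1}$ for $k\leq n-2$. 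Summing over $j$ yields
$$T_n\,(1-\s_{n-1}\cdots\s_1)=(1-\s_{n-1}^2\s_{n-2}\cdots\s_1)\,\iota_{n-1}^n(T_{n-1}),$$
and since $P_n=(1-\s_{n-1}\cdots\s_1)\,\iota_{n-1}^n(P_{n-1})$ and $T_n'=(1-\s_{n-1}^2\s_{n-2}\cdots\s_1)\,\iota_{n-1}^n(T_{n-1}')$, the identity $T_nP_n=T_n'$ follows by induction on $n$. The $U$-version is then immediate by conjugating by $\Delta_n$, using (2): $\Delta_nT_n\Delta_n^{-1}=U_n$, $\Delta_nP_n\Delta_n^{-1}=Q_n$, $\Delta_nT_n'\Delta_n^{-1}=U_n'$. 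With this lemma inserted, your outline for (6) becomes a proof.
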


\begin{proof}
(1) and (2) are proved in \cite{KT08}, Theorem 1.24. (3) is Lemma 4 and Proposition 4 in \cite{Fang11}. (4) is Corollary 2, \textit{ibid.} For the first identities of (5) and (6), see Proposition 5 and Proposition 6, \textit{ibid.} Applying $\Delta_n$ on these identities gives $S_n=U_2\cdots U_n$ and $U_nQ_n=U_n'$.
\end{proof}

We fix the notation for the embedding of braid groups at a fixed position.

\begin{definition}
For $m\geq 3$ and $2\leq k\leq m-1$, we let $\iota_{k}^m:\mathfrak{B}_k\ra\mathfrak{B}_m$ denote the group homomorphism defined by $\iota_k^m(\s_i)=\s_{m-k+i}$.
\end{definition}

\subsection{Relations with differential operators}
The following lemma explains the relation between the operator $T_n$ and the differential operators $\p_i^R$ defined in the Section \ref{section:diff}.

\begin{lemma}\label{Lem5.2}
Let $x\in T^n(V)$. The following statements are equivalent:
\begin{enumerate}
\item $T_nx=0$;
\item for any $i\in I$, $\p_i^R(x)=0$.
\end{enumerate}
\end{lemma}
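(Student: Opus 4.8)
The equivalence should follow by unwinding the definition of $\p_i^R$ in terms of the coproduct on $T(V)$ and comparing it, degree by degree, with the action of the operator $T_n = 1 + \s_{n-1} + \s_{n-1}\s_{n-2} + \cdots + \s_{n-1}\cdots\s_1$ on $V^{\ts n}$. First I would recall that on $T(V)$ the coproduct is the braided one, so that for a homogeneous element $x \in V^{\ts n}$ one has a Sweedler decomposition $\Delta(x) = \sum_{k=0}^n x_{(1)}^{(k)} \ts x_{(2)}^{(n-k)}$ with the two tensor factors of degrees $k$ and $n-k$ respectively. Since $\vp(v_i, -)$ is supported on $V = V^{\ts 1}$, in the formula $\p_i^R(x) = \sum x_{(1)} \vp(v_i, x_{(2)})$ only the $k = n-1$ component survives, so $\p_i^R$ picks out the "last-letter $v_i$" part of the degree-$(n-1,1)$ component of the coproduct.

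**Identifying the coproduct component.**
The key computational step is to express that degree-$(n-1,1)$ component of $\Delta$ explicitly as a braid-group operator on $V^{\ts n}$. This is the standard shuffle description of the braided coproduct: the component of $\Delta$ landing in $V^{\ts(n-1)} \ts V^{\ts 1}$ is given by summing over the ways of moving each of the $n$ tensor slots into the last position, which is precisely $\sum_{j=1}^{n}$ of the braid that cyclically carries slot $j$ past slots $j+1, \ldots, n$; collecting these, the "second tensor factor" coordinate of this map is exactly the operator $T_n$ acting on $V^{\ts n}$ followed by the projection onto the last slot. Concretely, writing $x = \sum x^{[1]} \ts \cdots \ts x^{[n]}$, one checks that $\p_i^R(x) = \pi_i\big(T_n x\big)$ where $\pi_i : V^{\ts n} \to V^{\ts(n-1)}$ extracts those summands whose last tensor factor equals $v_i$ and deletes it. I would verify this identity by induction on $n$, using that $\Delta$ on $T(V)$ is the unique algebra map extending $v \mapsto v\ts 1 + 1 \ts v$ with the braiding replacing the flip; the inductive step amounts to the well-known recursion $T_n = (1 + \s_{n-1}\cdots\s_1)$-type expansion together with the compatibility of $\iota_{n-1}^n$ with the coproduct.

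**Concluding the equivalence.**
Once the identity $\p_i^R(x) = \pi_i(T_n x)$ is established for all $i \in I$, the equivalence is immediate: since $\{v_1, \ldots, v_N\}$ is a basis of $V$, the maps $\{\pi_i\}_{i \in I}$ jointly separate points of $V^{\ts n}$, i.e. $\bigcap_{i \in I} \ker \pi_i \cap \im(T_n) = 0$ once we observe $T_n x \in V^{\ts(n-1)}\ts V$. Hence $\p_i^R(x) = 0$ for all $i$ is equivalent to $\pi_i(T_n x) = 0$ for all $i$, which is equivalent to $T_n x = 0$. The direction $T_n x = 0 \Rightarrow \p_i^R(x) = 0$ is then trivial, and the converse uses precisely this separation.

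**Main obstacle.**
The routine but genuinely delicate step is the bookkeeping in the shuffle/coproduct formula: one must track the braiding scalars $q_{ij}$ correctly so that the degree-$(n-1,1)$ part of $\Delta$ matches $T_n$ on the nose rather than up to an invertible diagonal twist. In the diagonal type this is manageable since all braidings are scalar multiples of the flip, but getting the indices of the $\s_j$'s and the order of composition to agree with the stated form of $T_n$ (rightmost factor $\s_{n-1}\cdots\s_1$ corresponding to moving the first slot to the last) requires care. Everything else is formal.
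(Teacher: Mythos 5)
Your proof is correct and follows essentially the same route as the paper: the paper's entire argument is the single identity $T_nx=\sum_{i\in I}\p_i^R(x)\,v_i$ (stated there as clear from the definition of the braided coproduct), which is exactly your identity $\p_i^R(x)=\pi_i(T_nx)$ read in the other direction, with the conclusion following from linear independence of the $v_i$ in the last tensor slot. Your extra care with the $(n-1,1)$-shuffle description of $\Delta$ just fills in the details the paper leaves implicit.
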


\begin{proof}[Proof]
It comes from the following identity, which is clear from definition: for any $x\in T^n(V)$, 
\begin{equation}\label{**}
T_nx=\sum_{i\in I} \p_i^R(x)v_i.
\end{equation}
\end{proof}

\begin{remark}
The same result holds for left operators: $U_nx=0$ if and only if for any $i\in I$, $\p_i^L(x)=0$.
\end{remark}

\subsection{Tensor space representation of $\mathfrak{B}_n$}\label{tensorrep}
An $n$-uplet $\underline{i}=(i_1,\cdots,i_m)\in\mathbb{N}^m$ is called a partition of $n$, denote by $\underline{i}\vdash n$, if $i_1+\cdots+i_m=n$. Suppose from now on that $V\in{}^H_H\mathcal{YD}$ is of diagonal type with braiding matrix $(q_{ij})$. The braid group $\fb_n$ acts on $V^{\ts n}$, making it a $\mc[\fb_n]$-module. Since the braiding is of diagonal type, the $\mc[\fb_n]$-module $V^{\ts n}$ admits a decomposition into its submodules:
\begin{equation}\label{Eq:decomposition}
V^{\ts n}=\bigoplus_{\underline{i}\in \mathcal{P}_n}\mc[\fb_n].v_1^{i_1}\cdots v_m^{i_m},
\end{equation}
where $\mathcal{P}_n=\{\underline{i}=(i_1,\cdots,i_m)\in\mathbb{N}^m|\ \underline{i}\vdash n\}.$
\par
To simplify notations, for $\underline{i}=(i_1,\cdots,i_m)$, we denote $v_{\underline{i}}:=v_1^{i_1}\cdots v_m^{i_m}$ and the $\mc[\mathfrak{B}_n]$-module $\mc[X_{\underline{i}}]:=\mc[\fb_n].v_{\underline{i}}$.
\par
We let $\mathcal{H}$ denote the set of invariants $(V^{\ts n})^{\theta_n}$ under the action of the central element $\theta_n$. As $\theta_n\in Z(\fb_n)$, $\theta_nv_{\underline{i}}=v_{\underline{i}}$ implies that $\mc[X_{\underline{i}}]\subset\mathcal{H}$. Moreover, there exists some subset $J\subset \mathcal{P}_n$ such that 
$$\mathcal{H}=\bigoplus_{\underline{i}\in J}\mc[X_{\underline{i}}]$$ 
(see the argument in Section 6.1 of \cite{Fang11}).
\par
We finish this subsection by the following remark, which will frequently appear in the following discussions.

\begin{remark}\label{useful}
Suppose that $\underline{i}\vdash n$, $x\in \mc[X_{\underline{i}}]$ and $v\in V$. Then $(\id-\s_n\cdots\s_1)(vx)$ is in the ideal generated by $x$.
\par
To show this, notice that the coefficient $\lambda$ such that $\s_n\cdots\s_1(vx)=\lambda xv$ only depends on the chosen partition $\underline{i}$, it is therefore a constant for any $x\in \mc[X_{\underline{i}}]$.
\end{remark}

\subsection{Defining ideals of degree 2}

Elements of degree two in the defining ideal can be easily computed. They are characterized by the following proposition:

\begin{proposition}\label{Prop:deg2}
The following statements are equivalent: for $i\neq j$,
\begin{enumerate}
\item $q_{ij}q_{ji}=1$;
\item $v_iv_j-q_{ij}v_jv_i\in\ker S_2$;
\item $\theta_2(v_iv_j)=v_iv_j$.
\end{enumerate}
\end{proposition}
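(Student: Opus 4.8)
The statement to prove is a triple equivalence involving $q_{ij}q_{ji}=1$, membership of $v_iv_j - q_{ij}v_jv_i$ in $\ker S_2$, and $\theta_2$-invariance of $v_iv_j$. This is a low-degree computation so the plan is essentially to compute explicitly on the two-dimensional space spanned by $v_iv_j$ and $v_jv_i$ (when $i \neq j$), or to observe triviality (when $i = j$).

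\textbf{Plan.} The proof will be a direct computation using only the definitions. Recall that $\fb_2$ is generated by $\s_1$, which acts on $V^{\ts 2}$ by the braiding, so $\s_1(v_i \ts v_j) = q_{ij}\, v_j \ts v_i$. First I would dispose of the case $i=j$: then $q_{ij}q_{ji} = q_{ii}^2$, but also $v_iv_i - q_{ii}v_iv_i = (1-q_{ii})v_iv_i$ lies in $\ker S_2$ iff $(1-q_{ii})(1+q_{ii})v_iv_i = 0$, i.e. $q_{ii}^2 = 1$, matching (1); and $\theta_2(v_iv_i) = \s_1^2(v_iv_i) = q_{ii}^2 v_iv_i$, which equals $v_iv_i$ iff $q_{ii}^2=1$. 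So all three are equivalent to $q_{ii}^2 = 1$ in this case. Then assume $i \neq j$ and work in the span of $\{v_iv_j, v_jv_i\}$, on which $\s_1$ acts by the matrix sending $v_iv_j \mapsto q_{ij}v_jv_i$ and $v_jv_i \mapsto q_{ji}v_iv_j$.

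\textbf{Key steps.} (i) Compute $S_2 = T_2 = 1 + \s_1$ (this is $T_2$ from the definition, and by Proposition \ref{braididentity}(5) $S_2 = T_2$). Apply it to $v_iv_j - q_{ij}v_jv_i$: the result is $(v_iv_j - q_{ij}v_jv_i) + (q_{ij}v_jv_i - q_{ij}q_{ji}v_iv_j) = (1 - q_{ij}q_{ji})v_iv_j$. This vanishes iff $q_{ij}q_{ji} = 1$, giving (1) $\Leftrightarrow$ (2). (ii) Compute $\theta_2 = \Delta_2^2 = \s_1^2$ (from the definition $\Delta_2 = \s_1$). Then $\theta_2(v_iv_j) = \s_1^2(v_iv_j) = \s_1(q_{ij}v_jv_i) = q_{ij}q_{ji}\, v_iv_j$, so $\theta_2(v_iv_j) = v_iv_j$ iff $q_{ij}q_{ji} = 1$, giving (1) $\Leftrightarrow$ (3). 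One may optionally remark that by Proposition \ref{schauenburg}, statement (2) says precisely that $v_iv_j - q_{ij}v_jv_i$ represents $0$ in $\mathfrak{N}(V)$, i.e. lies in $\mathfrak{I}(V)$, so the proposition identifies exactly when the quantum Serre-type relation of degree $2$ holds.

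\textbf{Main obstacle.} There is essentially no obstacle: every step is a two-line matrix computation on a space of dimension at most $2$, and the only mild subtlety is remembering to treat $i=j$ separately (where the relevant vector is $(1-q_{ii})v_iv_i$ rather than a difference of two linearly independent monomials). The one thing worth stating carefully is that $\ker S_2 = \ker T_2$ and that on each block $\mc[X_{\underline i}]$ the operators $S_2$ and $\theta_2$ are given by the explicit formulas above; once that is in place the equivalences fall out immediately.
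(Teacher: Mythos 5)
Your proof is correct, and all the computations check out (including the $i=j$ case, which the paper does not treat separately but which your argument covers cleanly). The route is mildly different in packaging: the paper observes that $v_iv_j-q_{ij}v_jv_i=P_2(v_iv_j)$ and then invokes the braid-group identity $T_2P_2=T_2'=1-\theta_2$ from Proposition \ref{braididentity}, so that $(2)\Leftrightarrow(3)$ falls out of the operator identity without ever expanding in the basis $\{v_iv_j,v_jv_i\}$; the equivalence $(1)\Leftrightarrow(3)$ is left implicit as the one-line computation $\theta_2(v_iv_j)=q_{ij}q_{ji}v_iv_j$, which you do spell out. Your version expands everything by hand, which at degree $2$ costs nothing; the paper's phrasing is chosen because $T_nP_n=T_n'$ and its relation to $\theta_n$ is exactly the mechanism driving the definition of pre-relations and Proposition \ref{Prop:theta} in higher degree, so its degree-$2$ proof is the prototype of the general argument. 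Either formulation is acceptable here.
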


\begin{proof}[Proof]
It suffices to prove that (2) is equivalent to (3). Notice that $v_iv_j-q_{ij}v_jv_i=P_2(v_iv_j)$ and $S_2=T_2$. Then $T_2P_2(v_iv_j)=0$ if and only if $T_2'(v_iv_j)=0$ if and only if $\theta_2(v_iv_j)=v_iv_j$.
\end{proof}

\section{Another characterization of $\mathfrak{I}(V)$}\label{Sec5.4}

In this section, we give a characterization for a generating set of the defining ideal $\mathfrak{I}(V)$ using kernels of operators $T_n$, which is motivated by the work of Fr\o nsdal and Galindo \cite{FG00}. In fact, we could use the left or right differential operators to give a full characterization of the generators of the defining ideal.
\par
The following definition is due to Fr\o nsdal and Galindo \cite{FG00}:
\begin{definition}
An element $x\in T^n(V)$ is called a right (left) constant of degree $n$ if $T_nx=0$ $\CL U_nx=0\CR$. We let ${\Con}^R_n$ $\CL\Con^L_n\CR$ denote the vector space generated by all right (left) constants of degree $n$ and for any $m\geq 2$, we define
$${\Con}_{\leq m}^R=\Span\left(\bigcup_{2\leq n\leq m} {\Con}_n^R\right),\ \ {\Con}^R=\Span\left(\bigcup_{n\geq 2} {\Con}_n^R\right),$$
$${\Con}_{\leq m}^L=\Span\left(\bigcup_{2\leq n\leq m} {\Con}_n^L\right),\ \ {\Con}^L=\Span\left(\bigcup_{n\geq 2} {\Con}_n^L\right),$$
where the notation $\Span(X)$ stands for the $\mc$-vector space generated by the set $X$.
\end{definition}

The main technical tool is the following non-commutative version of the Taylor lemma for the diagonal braiding.
\begin{lemma}[Taylor Lemma, \cite{FG00}]
\begin{enumerate}
\item (Left version) For any integer $l\geq 1$ and $\underline{i}=(i_1,\cdots,i_l)\in\{1,\cdots,l\}^l$, there exists 
$$A^{\underline{i}}=\sum_{\s\in\mathfrak{S}_l}A^\s v_{i_{\s(1)}}\cdots v_{i_{\s(l)}}\in T^l(V)$$
with $A^\s\in \mc$ such that for any $x\in T^m(V)$,
$$x=c(x)+\sum_{l\geq 1}\sum_{\underline{i}\in\{1,\cdots,l\}^l}A^{\underline{i}}\p_{i_1}^L\cdots\p_{i_l}^L(x),$$
where $c(x)\in T^m(V)$ satisfies $\p_i^L(c(x))=0$ for any $i\in I$.

\item (Right version) For any integer $l\geq 1$ and $\underline{i}=(i_1,\cdots,i_l)\in\{1,\cdots,l\}^l$, there exists 
$$B^{\underline{i}}=\sum_{\s\in\mathfrak{S}_l}B^\s v_{i_{\s(1)}}\cdots v_{i_{\s(l)}}\in T^l(V)$$
with $B^\s\in \mc$ such that for any $x\in T^m(V)$,
$$x=d(x)+\sum_{l\geq 1}\sum_{\underline{i}\in\{1,\cdots,l\}^l}\p_{i_1}^R\cdots\p_{i_l}^R(x)B^{\underline{i}},$$
where $d(x)\in T^m(V)$ satisfies $\p_i^R(d(x))=0$ for any $i\in I$.
\end{enumerate}
\end{lemma}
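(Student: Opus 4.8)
The plan is to establish the right version; the left version follows symmetrically (reversing the order of tensor factors turns $\p_i^R$ into $\p_i^L$). Fix $x\in T^m(V)$. Two remarks organize the argument. First, each $\p_i^R$ is homogeneous of degree $-1$ on $T(V)$, so $\p_{i_1}^R\cdots\p_{i_l}^R(x)=0$ as soon as $l>m$; hence only finitely many terms of the claimed expansion are nonzero and the expression is well defined. Second, from $\p_a^R(y)=\sum y_{(1)}\vp(a,y_{(2)})$, the generalized Hopf pairing axioms and $\Delta(v_i)=v_i\ts 1+1\ts v_i$ one extracts a braided Leibniz rule
$$\p_j^R(y\,v_i)=\delta_{ij}\,y+q_{ji}\,\p_j^R(y)\,v_i,$$
and, iterating it, a formula for $\p_j^R(y\,v_{k_1}\cdots v_{k_r})$ in which each ``boundary'' summand deletes one letter $v_{k_s}$ at the price of a monomial in the scalars $q_{\bullet j}$, while the unique ``interior'' summand is a scalar multiple of $\p_j^R(y)\,v_{k_1}\cdots v_{k_r}$. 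The diagonal hypothesis enters precisely here: it is what makes the interior term a \emph{scalar} multiple, which is what allows the recursion below to close over $\mc$.

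Next I would construct the universal coefficients $B^{\underline i}$ by induction on the depth $l=|\underline i|$. Set $B^{(i)}:=v_i$, so that the depth-one contribution $\sum_i\p_i^R(x)v_i$ equals $T_m x$ by \eqref{**}. Assume $B^{\underline i}$ has been fixed for all $|\underline i|\le l$, and apply $\p_j^R$ to $x$ minus the depth-$\le l$ part of the expansion being built; expanding each $\p_j^R(\,\cdot\,B^{\underline i})$ by the iterated Leibniz rule and grouping the output by the length of the string of $\p^R$'s that has been applied to $x$, the part of depth $\le l$ cancels by the inductive choice, and what remains is a $\mc$-linear combination of monomials $\p_j^R\p_{i_1}^R\cdots\p_{i_l}^R(x)\,v_{k_1}\cdots v_{k_l}$ whose coefficients depend only on the braiding matrix and on the indices $\underline i,j,\underline k$, not on $x$ or $m$. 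The depth-$(l+1)$ coefficients $B^{\underline j}$ are then pinned down over $\mc$ by requiring that $\p_a^R$ of the new correction term reproduce this remainder for every $a$; solving this linear system moves the whole discrepancy to depth $l+2$, and since depths are bounded the procedure terminates with $d(x):=x-\sum_{l\ge1}\sum_{\underline i}\p_{i_1}^R\cdots\p_{i_l}^R(x)B^{\underline i}\in T^m(V)$ satisfying $\p_j^R(d(x))=0$ for all $j$ --- that is, $d(x)\in\Con_m^R$, by Lemma \ref{Lem5.2}.

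The step I expect to be the main obstacle is the solvability of the depth-$(l+1)$ linear system together with the universality (and the claimed symmetric-group shape) of its solution: forcing the remainder to vanish for \emph{all} $x\in T^m(V)$ and all $m$ imposes many equations on the few scalars inside $B^{\underline j}$, and one must verify they are consistent. Concretely this amounts to checking that the different orders in which the letters of a given $B^{\underline i}$ can be stripped off under the Leibniz rule yield compatible constraints --- a purely combinatorial identity about iterated applications of the rule. When a ``braiding determinant'' such as $q_{ij}q_{ji}$ equals $1$ the obvious solution picks up a pole and the system becomes underdetermined instead; this is harmless, since the extra freedom only reflects that $d(x)$ is unique just modulo $\Con_m^R$ --- which is why the statement asserts existence, not uniqueness, of $d(x)$. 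The cleanest route to making all of this rigorous is probably a nested induction on $m$, feeding the degree-$(m-1)$ case of the lemma --- applied to the derivatives $\p_j^R(x)\in T^{m-1}(V)$ --- into the construction and control of the degree-$m$ coefficients $B^{\underline i}$.
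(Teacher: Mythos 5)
The paper does not actually prove this lemma: it is imported wholesale from Fr\o nsdal--Galindo \cite{FG00}, so there is no internal proof to compare against, and what is being assessed is whether your sketch would stand on its own. Its skeleton is right --- the sum is finite because each $\p_i^R$ drops degree by one, the braided Leibniz rule $\p_j^R(yv_i)=\delta_{ij}y+q_{ji}\p_j^R(y)v_i$ is correct (it is the same formula the paper uses later in the proof of Lemma \ref{Lem:comm}), and with $B^{(i)}=v_i$ the depth-one term is $T_mx$ as in \eqref{**}, with $\p_j^R\bigl((1-T_m)x\bigr)=-\sum_i q_{ji}\p_j^R\p_i^R(x)v_i$ indeed a combination of second derivatives, so the error does get pushed to higher depth.

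The genuine gap is exactly the step you flag and then leave open: the existence, universality and consistency of the coefficients $B^{\underline{i}}$, which is the entire content of the lemma. Two concrete problems. First, when $\p_j^R$ acts on a product $\p_{i_1}^R\cdots\p_{i_l}^R(x)\,B^{\underline{i}}$, the Leibniz scalars attached to the summands in which the derivative strikes a letter of $B^{\underline{i}}$ involve the braiding of $v_j$ past the \emph{left} factor, hence depend on the multidegree of $\p_{i_1}^R\cdots\p_{i_l}^R(x)$ and therefore on $x$ and $m$; so the linear system you propose to solve is not visibly independent of $x$, and universality requires either working inside each homogeneous component $\mc[X_{\underline{i}}]$ or showing the dependence cancels --- neither is done. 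Second, the degenerate cases are not ``harmless'' by fiat. Already for $N=1$ with $q_{11}=q$ one has $\p^R(v^m)=(1+q+\cdots+q^{m-1})v^{m-1}=(m)_qv^{m-1}$ and $B^{(1,\dots,1)}=b_lv^l$, and the requirement $\p^R(d(v^m))=0$ becomes
$$\sum_{l\geq 1}(m)_q(m-1)_q\cdots(m-l+1)_q\,b_l=1\quad\text{whenever }(m)_q\neq 0.$$
At a root of unity the coefficient $(m)_q\cdots(1)_q$ of the new unknown $b_m$ can vanish while $(m)_q\neq 0$, so the $m$-th equation degenerates into a nontrivial consistency condition on the previously chosen $b_1,\dots,b_{m-1}$; it happens to hold, but that is an identity requiring proof, not a consequence of ``$d(x)$ is only unique modulo $\Con_m^R$''. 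Until the solvability of these systems is established uniformly in $m$ and in the braiding, the argument is a plausible plan rather than a proof; this is presumably why the paper cites \cite{FG00} instead of reproving the statement.
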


\begin{lemma}
For any $m\geq 2$, ${\Con}_{\leq m}^L$ and ${\Con}_{\leq m}^R$ are coideals in the coalgebra $T^{\leq m}(V)$.
\end{lemma}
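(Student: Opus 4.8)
The plan is to show that each of $\Con_{\leq m}^L$ and $\Con_{\leq m}^R$ is a subcoalgebra-without-counit, i.e.\ that $\Delta$ maps it into $\Con_{\leq m}^L\ts T^{\leq m}(V) + T^{\leq m}(V)\ts\Con_{\leq m}^L$ (and similarly on the right). By symmetry I will only treat the left case; the right case is obtained by the obvious left--right mirror, replacing $U_n$ and $\p_i^L$ by $T_n$ and $\p_i^R$ and flipping tensor factors.

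First I would reduce to homogeneous elements: since $T^{\leq m}(V)=\bigoplus_{n\le m}T^n(V)$ is a graded coalgebra and $\Con_{\leq m}^L=\bigoplus_{n\le m}\Con_n^L$ as a graded subspace, it suffices to prove the coideal property for a homogeneous left constant $w\in\Con_n^L$ with $2\le n\le m$. Write $\Delta(w)=\sum w_{(1)}\ts w_{(2)}$ using the braided coproduct on $T(V)$; the terms are bihomogeneous, $w_{(1)}\in T^k(V)$ and $w_{(2)}\in T^{n-k}(V)$ for $0\le k\le n$. The extreme terms $1\ts w$ and $w\ts 1$ lie in $T^{\leq m}(V)\ts\Con_{\leq m}^L$ and $\Con_{\leq m}^L\ts T^{\leq m}(V)$ respectively (and trivially have degree $\le m$), so the real content is the ``interior'' terms with $1\le k\le n-1$, all of which have both tensor legs of degree $\le n-1\le m-1\le m$, so the degree restriction is automatic. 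What must be shown is that in each such interior term the \emph{first} leg is a left constant.

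The key input is Lemma~\ref{Lemma:comm}(1) together with Lemma~\ref{Lem5.2}'s left analogue (the Remark after it): $w$ is a left constant iff $\p_i^L(w)=0$ for all $i\in I$. For the left differential operator $\p_i^L$ we have $\Delta(\p_i^L(w))=\sum\p_i^L(w_{(1)})\ts w_{(2)}$. Applying $\id\ts\ve$ is not quite what I want; instead I use that $\p_i^L(w)=0$ forces $\sum\p_i^L(w_{(1)})\ts w_{(2)}=0$ in $T(V)\ts T(V)$, and then project onto the bidegree-$(k,n-k)$ component: $\sum_{\deg w_{(1)}=k}\p_i^L(w_{(1)})\ts w_{(2)}=0$. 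The elements $w_{(2)}$ appearing here span a subspace of $T^{n-k}(V)$, and after choosing a basis among them one concludes $\p_i^L(w_{(1)})=0$ for each $w_{(1)}$ in the corresponding dual-basis expansion. Doing this for every $i\in I$ and every $k$ with $1\le k\le n-1$ shows every interior first leg is annihilated by all $\p_i^L$, i.e.\ lies in $\Con_k^L\subset\Con_{\leq m}^L$. Hence $\Delta(w)\in\Con_{\leq m}^L\ts T^{\leq m}(V)+T^{\leq m}(V)\ts\Con_{\leq m}^L$, which is the assertion.

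The step I expect to be the main (though not deep) obstacle is the linear-algebra passage from ``$\sum_j\p_i^L(a_j)\ts b_j=0$'' to ``each relevant $a_j$ is a left constant'': one must be careful that the $b_j$'s need not be linearly independent, so one should first rewrite $\Delta(w)$ in each bidegree as $\sum_j a_j\ts b_j$ with the $b_j$ linearly independent in $T^{n-k}(V)$ (possible since we are over a field), and only then read off $\p_i^L(a_j)=0$. A secondary point to be handled cleanly is that the differential operators $\p_i^L$ are defined via the Hopf pairing on $T(V)$, so the identity $\Delta\circ\p_i^L=(\p_i^L\ts\id)\circ\Delta$ used above is exactly Lemma~\ref{Lemma:comm}(1) on $T(V)$, and no passage to $\mathfrak{N}(V)$ is needed. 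Once these bookkeeping matters are dispatched the proof is a short formal argument.
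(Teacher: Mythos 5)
Your proof is correct and follows essentially the same route as the paper: characterize constants as the joint kernel of the differential operators (Lemma \ref{Lem5.2} and its remark), apply Lemma \ref{Lemma:comm}(1) to get $\sum \p_i^L(w_{(1)})\ts w_{(2)}=0$ (resp.\ $\sum x_{(1)}\ts\p_i^R(x_{(2)})=0$), and conclude that the relevant tensor legs are again constants. Your extra care with bidegree components and with choosing the second legs linearly independent only makes explicit the step the paper leaves implicit in ``therefore for any $i\in I$, $\p_i^R(x_{(2)})=0$.''
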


\begin{proof}[Proof]
We prove it for ${\Con}_{\leq m}^R$: taking $x\in\ker T_n$ for some $n\leq m$, then for any $i\in I$, $\p_i^R(x)=0$, which implies that (see Lemma \ref{Lemma:comm})
$$0=\Delta(\p_i^R(x))=\sum x_{(1)}\ts \p_i^R(x_{(2)})$$
and therefore for any $i\in I$, $\p_i^R(x_{(2)})=0$. This shows $\Delta(x)-x\ts 1\in T^{\leq m}(V)\ts {\Con}_{\leq m}^R$ and
$$\Delta(x)\in {\Con}_{\leq m}^R\ts T^{\leq m}(V)+T^{\leq m}(V)\ts {\Con}_{\leq m}^R.$$
\end{proof}

For a ring $R$ and a subset $X\subset R$, $<X>_{\ideal}$ denotes the ideal in $R$ generated by $X$.

\begin{theorem}\label{Thm:1}
For any $m\geq 2$, let 
$$R^m=\left<{\Con}_{\leq m}^R\right>_{\ideal}\, \cap\, T^m(V).$$
Then $R^m=\ker S_m$.
\end{theorem}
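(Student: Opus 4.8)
The plan is to prove the two inclusions $R^m \subseteq \ker S_m$ and $\ker S_m \subseteq R^m$ separately, using Proposition~\ref{schauenburg} to identify $\ker S_n$ with the degree-$n$ part of the defining ideal $\mathfrak{I}(V)$, and Lemma~\ref{Lem5.2} to translate the condition $T_n x = 0$ into the vanishing of all right differential operators $\p_i^R(x)$.

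\emph{The inclusion $R^m \subseteq \ker S_m$.} Since $\ker S_m$ is the degree-$m$ homogeneous component of the two-sided ideal $\mathfrak{I}(V)$, it suffices to check that every right constant of degree $n \le m$ lies in $\mathfrak{I}(V)$, i.e.\ that $\Con_{\le m}^R \subseteq \mathfrak{I}(V)$; then $\left<\Con_{\le m}^R\right>_{\ideal} \subseteq \mathfrak{I}(V)$ and intersecting with $T^m(V)$ gives the claim. So let $x \in \Con_n^R$, meaning $T_n x = 0$, equivalently $\p_i^R(x) = 0$ for all $i \in I$. Recall that $\mathfrak{I}(V)$ is the radical of the generalized Hopf pairing $\vp$ on $T(V)$. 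Using the iterated right Taylor expansion together with the fact that $\p_i^R$ is built from $\vp$, one shows that $\vp(a, x) = 0$ for all $a \in T(V)$: indeed, by the counit axioms one reduces to homogeneous $a$ of degree $n$, and then repeatedly applying property (2) of the Hopf pairing strips off one tensor factor of $a$ at a time, each step producing a factor $\p_i^R$ applied to $x$, which vanishes. Hence $x$ lies in the radical, i.e.\ $x \in \mathfrak{I}(V) \cap T^n(V) = \ker S_n$.

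\emph{The inclusion $\ker S_m \subseteq R^m$.} This is the substantive direction, and I expect it to be the main obstacle. Fix $x \in \ker S_m = \mathfrak{I}(V)\cap T^m(V)$; we must write $x$ as an element of the ideal generated by right constants of degree $\le m$. The idea is to induct on $m$. Apply the right Taylor Lemma to $x$: $x = d(x) + \sum_{l \ge 1}\sum_{\underline{i}} \p_{i_1}^R\cdots\p_{i_l}^R(x)\,B^{\underline{i}}$, where $d(x)$ satisfies $\p_i^R(d(x)) = 0$ for all $i$, hence $d(x) \in \Con_n^R \subseteq \Con_{\le m}^R$ (note $d(x)$ is homogeneous of degree $m$, so it contributes directly to $R^m$). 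It remains to handle the remaining sum. Each term $\p_{i_1}^R\cdots\p_{i_l}^R(x)$ is homogeneous of degree $m-l < m$, and — crucially — it still lies in $\mathfrak{I}(V)$, because $\mathfrak{I}(V)$ is a coideal and $\p_a^R$ preserves it (the differential operators descend to $\mathfrak{N}(V)$, as noted after their definition). So by the induction hypothesis $\p_{i_1}^R\cdots\p_{i_l}^R(x) \in \left<\Con_{\le m-l}^R\right>_{\ideal} \subseteq \left<\Con_{\le m}^R\right>_{\ideal}$, and multiplying on the right by $B^{\underline{i}} \in T^l(V)$ keeps it in the ideal. Summing, $x \in \left<\Con_{\le m}^R\right>_{\ideal}$, and since $x$ is homogeneous of degree $m$ we get $x \in R^m$. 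The base case $m = 2$ is Proposition~\ref{Prop:deg2}: an element of $\ker S_2$ is a scalar multiple of $v_iv_j - q_{ij}v_jv_i = P_2(v_iv_j)$ with $q_{ij}q_{ji}=1$, and $T_2 P_2(v_iv_j) = T_2'(v_iv_j) = 0$ by Proposition~\ref{braididentity}(6) combined with the invariance $\theta_2(v_iv_j)=v_iv_j$, so $\ker S_2 \subseteq \Con_2^R = R^2$.

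The delicate point in the inductive step is making sure the grading bookkeeping is consistent: $\left<\Con_{\le m}^R\right>_{\ideal}$ is not homogeneous as a set, but its degree-$m$ component $R^m$ is what we want, and one must verify that the ideal membership witnessed above, after projecting to degree $m$, still only uses constants of degree $\le m$ — this is automatic because all generators $\Con_{\le m}^R$ already have degree $\le m$ and multiplication only raises degree. A secondary subtlety is confirming that $\p_a^R$ genuinely maps $\mathfrak{I}(V)$ into itself; this follows from Lemma~\ref{Lemma:comm}(1), since if $x \in \mathfrak{I}(V)$ then $\Delta(\p_a^R(x)) = \sum x_{(1)} \otimes \p_a^R(x_{(2)})$ and the coideal property of $\mathfrak{I}(V)$ propagates the membership, together with the maximality of $\mathfrak{I}(V)$ as a coideal in $T^{\ge 2}(V)$.
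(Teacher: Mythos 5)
Your proof is correct and rests on the same engine as the paper's: the right Taylor lemma combined with induction on the degree. The differences are in the auxiliary steps. For the easy inclusion $R^m\subseteq\ker S_m$ the paper simply observes that $S_m=T_2\cdots T_m$, so $\ker T_n\subseteq\ker S_n\subseteq\mathfrak{I}(V)$, and concludes because $\mathfrak{I}(V)=\bigoplus_n\ker S_n$ is an ideal; your detour through the radical of the Hopf pairing proves the same thing but is heavier than necessary. The more interesting divergence is in the hard inclusion: to control the terms $\p_{i_1}^R\cdots\p_{i_l}^R(x)B^{\underline{i}}$ with $l\geq 2$, the paper proves a separate lemma, by a purely braid-group decomposition $T_k=T_k^1+T_k^2+T_k^3$ together with Remark \ref{useful}, asserting that $\p_i^R$ maps $R^k$ into $R^{k-1}$; you instead invoke the fact that $\p_i^R$ preserves $\mathfrak{I}(V)$ (which the paper asserts when it says the differentials descend to $\mathfrak{N}(V)$, and which you correctly justify via the coideal property and the radical of the pairing) and then identify $\mathfrak{I}(V)\cap T^{m-l}(V)=\ker S_{m-l}$ with $R^{m-l}$ by the induction hypothesis. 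Your route is shorter and bypasses the paper's most technical lemma, at the cost of leaning on the pairing/radical description of $\mathfrak{I}(V)$ rather than staying inside the braid-group formalism; the paper's version has the advantage of producing the ``invariance under integration'' statement (Proposition \ref{Cor:diff}) as a by-product, which is used later in the bar-involution argument. Your base case $m=2$ and the grading bookkeeping in the inductive step are both handled correctly.
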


\begin{proof}[Proof]
Since $S_r=T_2T_3\cdots T_r$  and $\ker(T_r:T^r(V)\ra T^r(V))\subset\ker S_r\subset\mathfrak{I}(V)$, the inclusion $R^m\subset \ker S_m$ comes from the fact that $\mathfrak{I}(V)=\bigoplus_{m\geq 2}\ker S_m$ is an ideal. 
It suffices to show the other inclusion. Take $x\in\ker S_m$, we prove that $x\in R^m$ by induction on $m$. The case $m=2$ is clear as $T_2=S_2$. \\
\indent
Suppose that for any $2\leq k\leq m-1$, $R^k=\ker S_k$. It suffices to show that if for any $i\in I$, $\p_i^R(x)\in\ker S_{m-1}$, then $x\in R^m$. Indeed, for an element $x\in\ker S_m$, there are two cases:
\begin{enumerate}
\item $T_mx=0$; in this case, $x\in R^m$ is clear by definition.
\item $T_mx\neq 0$; from the decomposition of $S_n$, $T_mx\in\ker S_{m-1}$. According to (\ref{**}), it implies that for any $i\in I$, $\p_i^R(x)\in\ker S_{m-1}$. The proof will be terminated if the above statement is proved.
\end{enumerate}
We proceed to show the above statement. The following lemma is needed.

\begin{lemma}
For any $k\geq 3$, if $x\in R^k$, then $\p_i^R(x)\in R^{k-1}$ for any $i\in I$.
\end{lemma}

We continue the proof of the theorem. Let $x\in T^m(V)$ such that for any $i\in I$, $\p_i^R(x)\in\ker S_{m-1}=R^{m-1}$. From the right version of the Taylor lemma, 
$$x=d(x)+\sum_{l\geq 1}\sum_{\underline{i}\in\{1,\cdots,l\}^l}\p_{i_1}^R\cdots\p_{i_l}^R(x)B^{\underline{i}}.$$
The first term $d(x)$ in the right hand side satisfies $T_m(d(x))=0$ so is contained in $R^m$. Moreover, the hypothesis on $\p_i^R(x)$ and the lemma above force $\p_{i_1}^R\cdots\p_{i_n}^R(x)$ to be in $R^{m-n}$, so the second term is in $R^m$.
\end{proof}

Now it remains to prove the lemma.

\begin{proof}[Proof of the lemma]
It suffices to deal with the case where $x=urw\in R^k$ such that $r\in \ker T_s\cap \mc[X_{\underline{i}}]$ for some $\underline{i}\vdash s$, $u\in T^p(V)$ and $w\in T^q(V)$ satisfying $k=s+p+q$.\\
\indent
We have the following decomposition: $T_k=T_k^1+T_k^2+T_k^3$ where
$$T_k^1=1+\s_{k-1}+\s_{k-1}\s_{k-2}+\cdots +\s_{k-1}\cdots\s_{p+s+1},$$
$$T_k^2=\s_{k-1}\cdots\s_{p+s}(\iota_{s}^{p+s}(T_s)),$$
$$T_k^3=\s_{k-1}\cdots\s_{p-1}+\cdots+\s_{k-1}\cdots\s_1.$$
It is clear that $T_k^2x=0$. By Remark \ref{useful}, both $T_k^1x$ and $T_k^3x$ are contained in $R^k$ since they are in the ideal generated by $r$. Moreover, it should be remarked that in $T_kx$, $r$ is always contained in the first $k-1$ tensorands. This is true by definitions of $T_k^1$ and $T_k^3$.\\
\indent
As a conclusion, we have shown that $T_kx\in R^k$, so $\p_i^R(x)\in R^{k-1}$ for any $i\in I$, by the formula (\ref{**}).
\end{proof}

In the proof of the above theorem, we have shown as a byproduct the following proposition, which can be looked as a kind of "invariance under integration".

\begin{proposition}\label{Cor:diff}
For $x\in T^m(V)$ where $m\geq 3$, the following statements are equivalent:
\begin{enumerate}
\item $x\in R^m$;
\item for any $i\in I$, $\p_i^R(x)\in R^{m-1}$.
\end{enumerate}
\end{proposition}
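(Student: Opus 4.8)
The statement to prove is Proposition \ref{Cor:diff}, which asserts for $x \in T^m(V)$ with $m \geq 3$ that $x \in R^m$ if and only if $\p_i^R(x) \in R^{m-1}$ for all $i \in I$. As the preamble to the proposition notes, both implications have essentially been established inside the proof of Theorem \ref{Thm:1}, so the plan is simply to extract and assemble them cleanly.

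For the forward direction ($x \in R^m \implies \p_i^R(x) \in R^{m-1}$): the key input is the lemma proved at the end of the Theorem's argument, stating that for $k \geq 3$, if $x \in R^k$ then $\p_i^R(x) \in R^{k-1}$ for all $i$. So this implication is immediate — I would reduce to the generating case $x = urw$ with $r \in \ker T_s \cap \mc[X_{\underline{i}}]$, decompose $T_k = T_k^1 + T_k^2 + T_k^3$ as in that lemma, observe $T_k^2 x = 0$ while $T_k^1 x, T_k^3 x$ lie in the ideal generated by $r$ via Remark \ref{useful}, conclude $T_k x \in R^k$, and then apply formula (\ref{**}) (i.e. $T_k x = \sum_i \p_i^R(x) v_i$) to read off $\p_i^R(x) \in R^{k-1}$. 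But since this is literally the content of the lemma already proved, I would just cite it. Note $R^m \subseteq \ker S_m \subseteq \mathfrak{I}(V)$ and $\mathfrak{I}(V)$ is a coideal, which is consistent but not needed here.

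For the reverse direction ($\p_i^R(x) \in R^{m-1}$ for all $i \implies x \in R^m$): this is exactly the "affirmation"/"claim" that drives the induction step in the Theorem's proof, and its proof there uses the right Taylor Lemma. I would write: apply the right version of the Taylor Lemma to get
$$x = d(x) + \sum_{l \geq 1}\sum_{\underline{i} \in \{1,\dots,l\}^l} \p_{i_1}^R \cdots \p_{i_l}^R(x) B^{\underline{i}},$$
where $d(x)$ satisfies $\p_i^R(d(x)) = 0$, hence $T_m(d(x)) = 0$ (by Lemma \ref{Lem5.2}), hence $d(x) \in R^m$ directly from the definition of $R^m$ as the ideal generated by right constants. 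For the remaining terms, the hypothesis $\p_i^R(x) \in R^{m-1}$ together with the already-proved lemma (forward direction, applied iteratively) gives $\p_{i_1}^R \cdots \p_{i_l}^R(x) \in R^{m-l}$; multiplying on the right by $B^{\underline{i}} \in T^l(V)$ keeps us inside the ideal $\langle \Con^R_{\leq m}\rangle_{\ideal}$, and the total degree is $m$, so each term lies in $R^m$. Summing, $x \in R^m$.

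The only genuine subtlety — and the single point I would be careful about — is the degree bookkeeping in the iterated application of the lemma: one needs $R^{m-1} \ni \p_i^R(x)$ to propagate to $\p_{i_1}^R\cdots\p_{i_l}^R(x) \in R^{m-l}$, which requires applying the forward-direction lemma $l-1$ more times and checking that $m - l \geq 2$ whenever the corresponding Taylor term is nonzero (if $m - l < 2$ the differential operators kill the element, so there is nothing to check). Everything else is a direct transcription of arguments already present, so I do not anticipate any real obstacle; the proposition is essentially a repackaging of the proof of Theorem \ref{Thm:1} for convenient later reference.
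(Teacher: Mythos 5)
Your proposal is correct and follows essentially the same route as the paper: the paper explicitly presents this proposition as having "already been shown" in the proof of Theorem \ref{Thm:1}, with the forward direction being the inner lemma ($x\in R^k$ implies $\p_i^R(x)\in R^{k-1}$) and the reverse direction being the claim proved via the right Taylor Lemma. Your extraction of both implications, including the degree bookkeeping for the iterated differentials, matches the paper's argument.
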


These results are correct when the prefix "right" is replaced by "left". The proof above can be adapted by using the left version of the Taylor lemma. We omit these statements while give the following corollary.

\begin{corollary}
Let 
$$L^m=\left<{\Con}_{\leq m}^L\right>_{\ideal}\, \cap\, T^m(V).$$
Then $R^m=L^m=\ker S_m$.
\end{corollary}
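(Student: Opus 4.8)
The plan is to prove the left-handed counterpart of Theorem~\ref{Thm:1}, that is $L^m=\ker S_m$; since Theorem~\ref{Thm:1} already gives $R^m=\ker S_m$, this at once yields the asserted chain $R^m=L^m=\ker S_m$. Every step in the proof of Theorem~\ref{Thm:1} has a mirror image obtained by replacing $T_n$, $\p_i^R$, ${\Con}_n^R$ throughout by $U_n$, $\p_i^L$, ${\Con}_n^L$. Concretely: the identity $S_n=T_2T_3\cdots T_n$ (each $\fb_k$ embedded on the first $k$ strands) is replaced by $S_n=U_2U_3\cdots U_n$ (each $\fb_k$ embedded on the last $k$ strands, via $\iota_k^n$), by Proposition~\ref{braididentity}(5); the formula~(\ref{**}) is replaced by its left companion $U_nx=\sum_{i\in I}v_i\,\p_i^L(x)$, which is just as clear from the definitions; Lemma~\ref{Lem5.2} is replaced by the remark following it; and the Taylor Lemma is already available in its left version. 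That such a parallel family of arguments exists is no accident: conjugation by the Garside element realises the automorphism $\s_i\mapsto\s_{n-i}$ of $\mc[\fb_n]$ (Proposition~\ref{braididentity}(2)), which interchanges $T_n$ and $U_n$.

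With this dictionary I would rerun the induction on $m$ verbatim. The base case $m=2$ is immediate since $U_2=S_2$. For the inductive step, take $x\in\ker S_m$: if $U_mx=0$ then $x\in L^m$ by definition; otherwise write $S_m=(U_2\cdots U_{m-1})\,U_m=\iota_{m-1}^m(S_{m-1})\,U_m$, so that $0=S_mx=\sum_{i\in I}v_i\otimes\bigl(S_{m-1}\p_i^L(x)\bigr)$ and the linear independence of the $v_i$ forces $\p_i^L(x)\in\ker S_{m-1}=L^{m-1}$ for all $i$. The left Taylor lemma then writes $x=c(x)+\sum_{l\ge1}\sum_{\underline i}A^{\underline i}\,\p_{i_1}^L\cdots\p_{i_l}^L(x)$, in which $U_mc(x)=0$ gives $c(x)\in L^m$, and iterating the left analogue of the key lemma (below) gives $\p_{i_1}^L\cdots\p_{i_l}^L(x)\in L^{m-l}$; as $L^m$ is an ideal, $x\in L^m$. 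Hence $L^m=\ker S_m$.

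The only ingredient that has to be genuinely re-derived, rather than merely transcribed, is the left analogue of the key lemma: for $k\ge3$, if $x=urw\in L^k$ with $r\in\ker U_s\cap\mc[X_{\underline i}]$, $u\in T^p(V)$, $w\in T^q(V)$ and $k=p+s+q$, then $\p_i^L(x)\in L^{k-1}$ for every $i$. The needed decomposition is $U_k=U_k^1+U_k^2+U_k^3$, where $U_k^1=1+\s_1+\cdots+\s_1\cdots\s_{p-1}$ gathers the summands $\s_1\cdots\s_j$ that sweep a letter of the $u$-block to the front, $U_k^2=\s_1\cdots\s_p+\cdots+\s_1\cdots\s_{p+s-1}=(\s_1\cdots\s_p)\,\iota_s^{p+s}(U_s)$ gathers those involving the $r$-block, and $U_k^3=\s_1\cdots\s_{p+s}+\cdots+\s_1\cdots\s_{k-1}$ gathers those involving the $w$-block. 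Then $U_k^2x=0$, since $\iota_s^{p+s}(U_s)$ acts only within the $r$-block and sends $r$ to $U_sr=0$; while $U_k^1x$ and $U_k^3x$ lie in the ideal generated by $r$ by the left analogue of Remark~\ref{useful} (diagonality lets one sweep a single letter past the block $r\in\mc[X_{\underline i}]$ at the cost of a scalar depending only on $\underline i$, so $r$ survives intact and stays within the first $k-1$ tensor legs). Thus $U_kx\in L^k$, and $\p_i^L(x)\in L^{k-1}$ by the left companion of~(\ref{**}); the degenerate cases $p=0$ and $q=0$ are dispatched directly using $\p_i^L(r)=0$. I expect the only real, if routine, obstacle to be the bookkeeping of which $\s_j$ occur in each $U_k^a$ and of the embedding indices; once this is pinned down, the whole argument is a faithful mirror of the one already given.
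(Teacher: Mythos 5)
Your proposal is correct and follows exactly the route the paper intends: the paper proves only the right-handed Theorem~\ref{Thm:1} and disposes of the left-handed case with the single remark that ``the proof above can be adapted by using the left version of the Taylor lemma.'' Your mirrored decomposition $U_k=U_k^1+U_k^2+U_k^3$, the identification $U_k^2=(\s_1\cdots\s_p)\,\iota_s^{p+s}(U_s)$, and the left companion $U_nx=\sum_{i\in I}v_i\,\p_i^L(x)$ of (\ref{**}) are precisely the details the paper omits, and they check out.
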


As a conclusion, to find the generating relations, it suffices to consider those in the intersection of $\ker\p_i^R$ for all $i\in I$, or the intersection of $\ker\p_i^L$ for all $i\in I$. We will see in the next section a refined result giving more constraints.

\begin{remark}\label{leftright}
Globally, when passing to the generating ideal, there is no difference between the left and right cases. But an element annihilated by all right differentials may not necessarily contained in the kernel of all $\p_i^L$. We will return to this problem in Section \ref{Section:Balance}.
\end{remark}

The following lemma is a direct consequence of Lemma \ref{Lemma:comm}.
\begin{lemma}
For any $i\in I$ and $m\geq 3$, $\p_i^R$ $\CL$resp. $\p_i^L\CR$ sends ${\Con}_{\leq m}^L$ $\CL$resp. ${\Con}_{\leq m}^R\CR$ to ${\Con}_{\leq m-1}^L$ $\CL$resp. ${\Con}_{\leq m-1}^R\CR$.
\end{lemma}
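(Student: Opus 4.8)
The plan is to reduce the statement to a single homogeneous left constant and then commute the two differential operators past one another via Lemma~\ref{Lemma:comm}(2). Since ${\Con}_{\leq m}^L=\Span\bigl(\bigcup_{2\leq n\leq m}{\Con}_n^L\bigr)$ and $\p_i^R$ is linear, it is enough to fix $n$ with $2\leq n\leq m$, take $x\in{\Con}_n^L$, and show $\p_i^R(x)\in{\Con}_{\leq m-1}^L$. First I would record that $\p_i^R$ lowers the tensor degree by one: by the identity $(\ref{**})$ one has $T_nx=\sum_{j}\p_j^R(x)v_j\in T^n(V)$ for $x\in T^n(V)$, so $\p_i^R(x)\in T^{n-1}(V)$.

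Next I would verify that $\p_i^R(x)$ is again a left constant. By the remark following Lemma~\ref{Lem5.2}, membership $x\in{\Con}_n^L$ is equivalent to $\p_j^L(x)=0$ for every $j\in I$. Applying Lemma~\ref{Lemma:comm}(2) with $a=v_j$ and $b=v_i$ gives, for each $j\in I$,
$$\p_j^L\bigl(\p_i^R(x)\bigr)=\p_i^R\bigl(\p_j^L(x)\bigr)=\p_i^R(0)=0.$$
Thus $\p_i^R(x)\in T^{n-1}(V)$ is annihilated by every $\p_j^L$. When $n\geq 3$ this says precisely $\p_i^R(x)\in{\Con}_{n-1}^L\subseteq{\Con}_{\leq m-1}^L$, which is what we want.

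The only remaining case is the boundary $n=2$, where $\p_i^R(x)\in T^1(V)=V$ and a ``degree-$1$ left constant'' is not covered by the definition; this small point is the sole place needing a separate argument. Here I would use that $\p_j^L$ acts on $V$ by $v_k\mapsto\delta_{jk}$, so $\bigcap_{j\in I}\ker\bigl(\p_j^L|_V\bigr)=0$; the vanishing $\p_j^L(\p_i^R(x))=0$ for all $j$ then forces $\p_i^R(x)=0$, which lies in ${\Con}_{\leq m-1}^L$ trivially (and $m\geq 3$ guarantees this space is defined). Finally, the assertion for $\p_i^L$ on ${\Con}_{\leq m}^R$ is proved verbatim after exchanging left and right, using the right-hand halves of Lemma~\ref{Lemma:comm} and the left analogue of Lemma~\ref{Lem5.2}. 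There is no genuine obstacle in this argument; the only thing requiring attention is the $n=2$ endpoint just discussed.
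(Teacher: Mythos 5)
Your argument is correct and is exactly the route the paper intends: it states the lemma as ``an easy consequence of Lemma~\ref{Lemma:comm}'', and your proof simply spells out that consequence, commuting $\p_j^L$ past $\p_i^R$ to see that the image of a left constant is again killed by every $\p_j^L$. Your careful handling of the degree-$2$ endpoint (where the image lands in $V$ and is forced to vanish) is a detail the paper leaves implicit, but it does not change the approach.
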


\section{Defining relations in the diagonal type}\label{Sec5.5}

\subsection{More constraints: pre-relations}
In this subsection, we propose a smaller set of generators in $\mathfrak{I}(V)$ by posing more constraints to the left and right constants. These constraints give a restriction on the degrees where this new set of generators may appear. We start by some motivations towards the main definition.
\par
Let $H$ be a Hopf algebra and $X\subset H$ be a subset. The Hopf ideal generated by $X$ is the smallest Hopf ideal containing $X$.

\begin{proposition}[\cite{Fang11}]
The Hopf ideal in ${}^{H}_H\mathcal{YD}$ generated by $\bigoplus_{n\geq 2}\left(\ker(S_n)\cap \im(P_n)\right)$ is $\mathfrak{I}(V)$.
\end{proposition}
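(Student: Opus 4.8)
The plan is to show that the Hopf ideal $\mathfrak{J}$ generated by $\bigoplus_{n\geq 2}\bigl(\ker(S_n)\cap\im(P_n)\bigr)$ coincides with $\mathfrak{I}(V)$. One inclusion is immediate: each $\ker(S_n)\cap\im(P_n)\subset\ker(S_n)\subset\mathfrak{I}(V)$ by Proposition \ref{schauenburg}, and since $\mathfrak{I}(V)$ is a Hopf ideal we get $\mathfrak{J}\subseteq\mathfrak{I}(V)$. The substance is the reverse inclusion $\mathfrak{I}(V)\subseteq\mathfrak{J}$, and here I would invoke the description $\mathfrak{I}(V)=\bigoplus_{m\geq 2}\ker S_m$ together with Theorem \ref{Thm:1} to reduce the problem to degree-graded pieces: it suffices to prove that for every $m\geq 2$, $\ker S_m$ lies in $\mathfrak{J}$. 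I would do this by induction on $m$, exactly mirroring the structure of the proof of Theorem \ref{Thm:1}.

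The base case $m=2$ is handled by Proposition \ref{Prop:deg2}: if $q_{ij}q_{ji}=1$ then the relation $v_iv_j-q_{ij}v_jv_i=P_2(v_iv_j)\in\ker S_2$, so $\ker S_2$ is spanned by elements simultaneously in $\ker S_2$ and $\im P_2$, hence lies in $\mathfrak{J}$. For the inductive step, take $x\in\ker S_m$ with $m\geq 3$, and assume $\ker S_k\subseteq\mathfrak{J}$ for all $2\leq k<m$. Following the dichotomy in the proof of Theorem \ref{Thm:1}: if $T_m x\neq 0$, then $T_m x\in\ker S_{m-1}$ and hence, by (\ref{**}), $\p_i^R(x)\in\ker S_{m-1}$ for all $i$; I would then use the right Taylor lemma to write $x=d(x)+\sum_{l\geq1}\sum_{\underline{i}}\p_{i_1}^R\cdots\p_{i_l}^R(x)B^{\underline{i}}$, where $d(x)\in\ker T_m$. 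The tail terms involve $\p_{i_1}^R\cdots\p_{i_l}^R(x)\in\ker S_{m-l}$, which lie in $\mathfrak{J}$ by the inductive hypothesis, so modulo $\mathfrak{J}$ we may assume $x=d(x)$, i.e. $x\in\ker T_m\subseteq\Con^R_m$; this is precisely the remaining case $T_m x=0$. So the crux is: show that any $x\in\ker T_m$ can be written, modulo lower-degree pieces of $\mathfrak{J}$, in terms of elements of $\ker S_m\cap\im P_m$.

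For this final reduction I would use Proposition \ref{braididentity}(6), $T_nP_n=T_n'$, and part (4), which expresses $1-\theta_n$ in terms of $1-\s_{n-1}^2\s_{n-2}\cdots\s_1$, together with the invariants structure from Section \ref{tensorrep}: decompose $V^{\ts m}$ into the $\mc[X_{\underline{i}}]$-blocks, note $\ker T_m\subseteq\mathcal{H}=(V^{\ts m})^{\theta_m}$ (since $T_m x=0$ forces $x$ into the kernel of differentials, and the invariance argument of \cite{Fang11}), and on the $\theta_m$-invariant part the operator $P_m$ acts invertibly up to a scalar on each root space — more precisely, I would argue that $\im P_m\cap\ker S_m$ already accounts for all of $\ker T_m$ up to the ideal generated by lower-degree relations, exploiting that $P_m$ and $T_m$ compose to $T_m'$ and that $T_m'$ detects $\theta_m$-invariance via identity (4).

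The main obstacle is exactly this last step: verifying that restricting the generating set from all of $\Con^R_m=\ker T_m$ down to $\ker S_m\cap\im P_m$ loses nothing modulo the ideal generated in lower degrees. The difficulty is that $\im P_m$ is genuinely smaller than $\ker T_m$, so one cannot simply say $\ker T_m\subseteq\im P_m$; instead one must show the \emph{difference} is absorbed into $\mathfrak{J}$, which requires carefully tracking how $P_m$ interacts with the block decomposition (\ref{Eq:decomposition}) and with Remark \ref{useful}, as in the lemma proved at the end of Section \ref{Sec5.4}. I expect this to mirror the "level $n$" analysis of \cite{Fang11}, where the primitivity of $\im(P_n)$-elements and the decomposition of $S_n$ as a product $T_2\cdots T_n$ combine to let one peel off one tensor factor at a time while staying inside the ideal generated by $\ker S_k\cap\im P_k$ for $k<m$.
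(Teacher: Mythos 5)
First, note that the paper does not actually prove this proposition: it is imported wholesale from \cite{Fang11} and used as a black box (the subsequent Corollary and Theorem \ref{Relr} build on it), so there is no in-paper argument to compare yours against. Judged on its own, your write-up correctly handles the easy inclusion, the degree-$2$ base case, and the Taylor-lemma reduction of a general $x\in\ker S_m$ to the case $x\in\ker T_m$ modulo the ideal generated in lower degrees (this part faithfully mirrors the proof of Theorem \ref{Thm:1}). But the step you flag as ``the main obstacle'' --- showing that every element of $\ker T_m$ (equivalently, of $\ker S_m$) lies in the ideal generated by $\bigoplus_{k\leq m}\bigl(\ker S_k\cap\im (P_k)\bigr)$ --- is precisely the nontrivial content of the proposition, and it is not established. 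Restating the claim as the remaining difficulty is not a proof of it.

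Moreover, the two specific assertions you offer in support of that step are problematic. The claim that ``on the $\theta_m$-invariant part the operator $P_m$ acts invertibly up to a scalar'' is false already for $m=2$: on a $\theta_2$-invariant block one has $q_{ij}q_{ji}=1$, so $\s_1$ has eigenvalues $\pm 1$ there and $P_2=1-\s_1$ annihilates the $(+1)$-eigenspace; if $P_m$ were invertible on $\mathcal{H}$ the statement $\ker T_m\cap\mathcal{H}\subseteq\im(P_m)$ would be immediate, and the proposition would be nearly trivial, which it is not. The claim $\ker T_m\subseteq\mathcal{H}=(V^{\ts m})^{\theta_m}$ is also unjustified: the only $\theta$-invariance statement available (Proposition \ref{Prop:theta}) concerns solutions of $T_m'w=0$ with the nondegeneracy hypothesis $X_{m-2,m}w\neq 0$, not elements of $\ker T_m$, and killing all right differentials does not by itself force $\theta_m$-invariance. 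What is actually needed is an argument in the spirit of the lemma inside the proof of Theorem \ref{Relr}: a filtration by the operators $X_{k,m}$ together with Remark \ref{useful} to peel off tensor factors, showing that the part of $\ker T_m$ not reached by $\im(P_m)$ is absorbed into the ideal generated by $\ker S_k\cap\im(P_k)$ for $k<m$. Without carrying that out, the proof is incomplete at its essential point.
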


This proposition, combined with Theorem \ref{Thm:1}, gives more constraints.

\begin{corollary}\label{Cor:2}
The Hopf ideal in ${}^{H}_H\mathcal{YD}$ generated by $\bigoplus_{n\geq 2}\left(\ker(T_n)\cap \im(P_n)\right)$ is $\mathfrak{I}(V)$.
\end{corollary}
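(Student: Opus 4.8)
The plan is to deduce this corollary by combining the two preceding inputs: the proposition (from \cite{Fang11}) that the Hopf ideal generated by $\bigoplus_{n\geq 2}\left(\ker(S_n)\cap \im(P_n)\right)$ equals $\mathfrak{I}(V)$, and Theorem \ref{Thm:1} together with its right-constant reformulation, which tells us that the ideal generated by the right constants $\Con^R$ already exhausts $\mathfrak{I}(V)$ degree by degree. The only gap to bridge is that $\ker(S_n)$ is a priori larger than $\ker(T_n)$, so I must show that passing to the smaller kernel $\ker(T_n)$ loses nothing once we also intersect with $\im(P_n)$ and close up as a Hopf ideal.

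First I would recall the factorizations from Proposition \ref{braididentity}: $S_n=T_2T_3\cdots T_n$ and, crucially, $T_nP_n=T_n'$. Let $J$ denote the Hopf ideal in ${}^H_H\mathcal{YD}$ generated by $\bigoplus_{n\geq 2}\left(\ker(T_n)\cap \im(P_n)\right)$. Since $\ker(T_n)\subset\ker(S_n)$ (because $S_n$ factors through $T_n$ on the right — more precisely $S_n = S_{n-1}\cdot(\text{stuff})$... actually $S_n$ acts with $T_n$ applied first when reading the product $T_2\cdots T_n$ right-to-left on $V^{\otimes n}$), we get $\ker(T_n)\cap\im(P_n)\subset\ker(S_n)\cap\im(P_n)\subset\mathfrak{I}(V)$, hence $J\subseteq\mathfrak{I}(V)$ immediately. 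For the reverse inclusion I would invoke Theorem \ref{Thm:1} in its right-constant form: $\mathfrak{I}(V)$ is generated as an ideal by $\Con^R=\Span\bigcup_{n\geq2}\ker(T_n)$. So it suffices to show each right constant $x\in\ker(T_n)$ lies in the Hopf ideal $J$.

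The key step is therefore: given $x\in\ker(T_n)$, produce enough elements of $\ker(T_\bullet)\cap\im(P_\bullet)$ to reconstruct $x$ inside the Hopf ideal they generate. Here I would mimic the induction in the proof of Theorem \ref{Thm:1} (and in \cite{Fang11}): run induction on $n$, and use the Taylor lemma to write $x = d(x) + \sum \partial^R_{i_1}\cdots\partial^R_{i_l}(x)B^{\underline i}$. The correction term $d(x)$ satisfies $T_m(d(x))=0$ but one needs its component in $\im(P_m)$; applying $P_m$ and using $T_mP_m=T_m'$ one controls the part of $d(x)$ that is both a right constant and in $\im(P_m)$, while the complementary part $d(x)-P_m(\cdot)$ should be expressible via lower-degree data and the comultiplication (i.e. via the Hopf-ideal closure), since $\partial^R_i$ lowers degree and preserves the relevant kernels (Lemma \ref{Lemma:comm} and the last lemma of Section \ref{Sec5.4}). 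The derivative terms $\partial^R_{i_1}\cdots\partial^R_{i_l}(x)$ lie in $\ker(T_{n-l})$ by the lemma in the proof of Theorem \ref{Thm:1}, so by the induction hypothesis they belong to $J$, and $J$ being an ideal absorbs the multiplication by $B^{\underline i}$ on the right.

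The main obstacle I anticipate is exactly the handling of the $\im(P_n)$ constraint: a generic right constant $x\in\ker(T_n)$ need not itself lie in $\im(P_n)$, so one cannot just quote the generating set — one must argue that the difference between $x$ and a suitable element of $\ker(T_n)\cap\im(P_n)$ already lies in the Hopf ideal generated by lower-degree such elements. This is precisely the point where the proof of the cited Proposition from \cite{Fang11} (which does the analogous thing for $S_n$) must be adapted; the replacement of $S_n$ by $T_n$ only simplifies the kernel being intersected, and the factorization $S_n = T_2\cdots T_n$ guarantees that the $S_n$-argument and the $T_n$-argument see the same image $\im(P_n)$. Once this compatibility is spelled out, the corollary follows formally. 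I would present it as a short deduction, citing Theorem \ref{Thm:1}, the preceding Proposition, and the identities $S_n=T_2\cdots T_n$, $T_nP_n=T_n'$, rather than redoing the full \cite{Fang11} induction.
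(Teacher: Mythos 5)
Your overall architecture matches what the paper intends: the easy inclusion follows from $\ker(T_n)\subseteq\ker(S_n)$ (via $S_n=T_2\cdots T_n$ with $T_n$ acting first), and the hard inclusion is supposed to come from combining the cited Proposition from \cite{Fang11} with Theorem \ref{Thm:1}. The paper itself offers nothing beyond that one-line combination, so on the level of ``which results to invoke'' you are aligned with the source.

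However, your attempt to fill in the hard inclusion has a concrete gap, which you partly acknowledge but do not resolve. After reducing via Theorem \ref{Thm:1} to showing that every right constant $x\in\ker(T_n)$ lies in the Hopf ideal $J$ generated by $\bigoplus_n\left(\ker(T_n)\cap\im(P_n)\right)$, your plan to ``mimic the induction in the proof of Theorem \ref{Thm:1}'' via the Taylor lemma collapses: for a right constant all the derivatives $\p_i^R(x)$ vanish by definition, so the Taylor decomposition degenerates to $x=d(x)$ and produces no derivative terms to feed the induction. You are left exactly where you started, facing the one genuinely nontrivial point of the corollary --- showing that an element of $\ker(T_n)$ agrees, modulo the Hopf ideal generated by lower-degree elements of $\ker(T_m)\cap\im(P_m)$, with an element of $\im(P_n)$. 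The machinery that actually accomplishes this in the paper is not the Taylor lemma but the interplay of $T_nP_n=T_n'$ with the telescoping identity $\left(\sum_{k=0}^{n-2}(\s_{n-1}^2\s_{n-2}\cdots\s_1)^k\right)(1-\s_{n-1}^2\s_{n-2}\cdots\s_1)=1-\theta_n$ and the $\theta_n$-eigenspace decomposition of $V^{\ts n}$, i.e.\ the content of Proposition \ref{Prop:theta} and the filtration by the operators $X_{k,n}$ used in the proof of Theorem \ref{Relr}. Your writeup names the obstacle (``a generic right constant need not lie in $\im(P_n)$'') but defers it entirely to ``adapting \cite{Fang11}'', so as it stands the proposal establishes only the trivial inclusion $J\subseteq\mathfrak{I}(V)$ and the (correct) reduction to right constants; the converse inclusion remains unproved.
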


Thanks to this corollary, to find relations imposed in $\mathfrak{B}(V)$, it suffices to concentrate on elements in $\im(P_n)$ annihilated by all right differentials $\p_i^R$.
\par
According to Proposition \ref{braididentity} (6), to find a generating set of $\mathfrak{I}(V)$, it suffices to consider the solution of the equation $T_n'x=T_nP_nx=0$ in $T^n(V)$. This observation motivates the following definition:

\begin{definition}
Let $n\geq 2$ be an integer. We call a non-zero element $v\in T^n(V)$ a right pre-relation of degree $n$ if
\begin{enumerate}
\item $T_nv=0$ and $v=P_nw$ for some $w\in T^n(V)$;
\item $\iota_{n-1}^n(T_{n-1}')w\neq 0$.
\end{enumerate}
Let ${\Rel}_r^n$ denote the vector space generated by all right pre-relations of degree $n$ and ${\Rel}_r$ denote the vector space generated by $\bigcup_{n\geq 2}{\Rel}_r^n$. Elements in ${\Rel}_r$ are called right pre-relations.
\end{definition}

We can similarly define left pre-relations of degree $n$ by replacing $T_n$ by $U_n$, $T_{n-1}'$ by $U_{n-1}'$ and $P_n$ by $Q_n$ in the above definition. Let ${\Rel}_l^n$ denote the $\mc$-vector space generated by all left pre-relations of degree $n$ and  ${\Rel}_l$ denote the $\mc$-vector space generated by $\bigcup_{n\geq 2}{\Rel}_l^n$. Elements in ${\Rel}_l$ are called left pre-relations.

\begin{remark}
They are called pre-relations as they may be redundant.
\end{remark}

We establish some properties of the pre-relations. Recall the definition of $T_n'$:
$$T_n'=(1-\s_{n-1}^2\s_{n-2}\cdots\s_1)(1-\s_{n-1}^2\s_{n-2}\cdots\s_2)\cdots(1-\s_{n-1}^2\s_{n-2})(1-\s_{n-1}^2).$$
We define the following elements in $\mc[\mathfrak{B}_n]$ for $1\leq m\leq n-1$:
$$X_{m,n}=(1-\s_{n-1}^2\s_{n-2}\cdots\s_{n-m})\cdots(1-\s_{n-1}^2\s_{n-2})(1-\s_{n-1}^2)=\iota_{m+1}^n(T_{m+1}'),$$
then $X_{1,n}=(1-\s_{n-1}^2)$, $X_{n-1,n}=T_n'$ and $X_{n-2,n}=\iota_{n-1}^n(T_{n-1}')$.

\begin{proposition}\label{Prop:theta}
If $T_n'w=0$ and $X_{n-2,n}w\neq 0$ for some $w\in T^n(V)$, then $\theta_nw=w$.
\end{proposition}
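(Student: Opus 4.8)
The plan is to extract the relation $\theta_n w = w$ from the vanishing of $T_n' w$ together with the non-vanishing of $X_{n-2,n} w$, by exploiting the factorization $T_n' = X_{n-2,n}\,(1 - \s_{n-1}^2\s_{n-2}\cdots\s_1)$, where the leftmost factor is precisely the one involving all strands. First I would recall from Proposition \ref{braididentity}(4) the telescoping identity
$$\left(\sum_{k=0}^{n-2}(\s_{n-1}^2\s_{n-2}\cdots\s_1)^k\right)(1-\s_{n-1}^2\s_{n-2}\cdots\s_1)=1-\theta_n,$$
and from Proposition \ref{braididentity}(3) the fact that $\theta_n = (\s_{n-1}^2\s_{n-2}\cdots\s_1)^{n-1}$. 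So writing $\tau = \s_{n-1}^2\s_{n-2}\cdots\s_1$, we have $T_n' = X_{n-2,n}(1-\tau)$ and $\theta_n = \tau^{n-1}$.

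The key step is to observe that $X_{n-2,n} = \iota_{n-1}^n(T_{n-1}')$ lies in the subalgebra $\iota_{n-1}^n(\mc[\mathfrak{B}_{n-1}])$ generated by $\s_2,\dots,\s_{n-1}$, hence commutes with... no — this is where care is needed, since $\tau$ involves $\s_1$. Instead I would argue on the module $\mc[X_{\underline{i}}]$ containing $w$ (by the decomposition \eqref{Eq:decomposition}, it suffices to treat $w$ in a single $\mc[X_{\underline{i}}]$). The point is that $\tau$ acts as a scalar-times-cyclic-shift on the relevant monomials in the diagonal case (this is the content of Remark \ref{useful}): more precisely, for $x\in\mc[X_{\underline j}]$ one has $\s_n\cdots\s_1$-type operators acting by a constant. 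So I would show $X_{n-2,n}$ and $\tau$ interact in a controlled enough way that $T_n' w = 0$ forces $(1-\tau)w$ to lie in $\ker X_{n-2,n}$; combined with the hypothesis $X_{n-2,n}w\neq 0$, I then want to conclude $(1-\tau)w$ is a scalar multiple of $w$, and then pin the scalar down to be $0$ using $\tau^{n-1} = \theta_n$ and the fact that $\theta_n$ is central with known eigenvalue $1$ on $\mathcal H$ (or by a direct eigenvalue argument: if $(1-\tau)w = cw$ then $w$ is a $\tau$-eigenvector with eigenvalue $1-c$, so $\theta_n w = (1-c)^{n-1} w$, and one needs $(1-c)^{n-1}=1$).

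The cleanest route is probably: decompose $w = \sum_j w_j$ according to \eqref{Eq:decomposition} with $w_j \in \mc[X_{\underline j}]$; on each summand $\tau$ acts (up to the diagonal scalar of Remark \ref{useful}) as a cyclic operator whose $(n-1)$st power is $\theta_n = \mathrm{id}$ on that module since each $v_{\underline j}$ is $\theta_n$-invariant; hence $\tau$ is diagonalizable on $\mc[X_{\underline j}]$ with eigenvalues that are $(n-1)$st roots of the relevant scalar, and on the $\theta_n$-invariant part these are genuine roots of unity. Then $T_n' = X_{n-2,n}(1-\tau)$, and I decompose $w$ along $\tau$-eigenspaces: $T_n' w = 0$ means $X_{n-2,n}$ kills the image of $(1-\tau)$ on $w$, i.e. $X_{n-2,n}$ kills the non-$\tau$-fixed part of $w$. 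If $w$ had a nonzero component $w'$ in a $\tau$-eigenspace with eigenvalue $\lambda \neq 1$, I would need to derive a contradiction with $X_{n-2,n}w\neq 0$ — which requires knowing $X_{n-2,n}$ is injective on that eigenspace, or at least that the fixed part alone cannot account for $X_{n-2,n}w$.

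I expect the main obstacle to be exactly this last point: showing that the hypothesis $X_{n-2,n}w\neq 0$ cannot be "carried" entirely by a $\tau$-eigencomponent on which $X_{n-2,n}$ survives while the bad components are killed — in other words, ruling out the scenario where $w = w_{\mathrm{fix}} + w'$ with $\tau w' \neq w'$, $X_{n-2,n}w' = 0$ (forced by $T_n'w=0$), yet $X_{n-2,n}w_{\mathrm{fix}} = X_{n-2,n}w \neq 0$. To close this, I would try to show that on $\ker X_{n-2,n}$ the operator $\tau$ is already the identity, equivalently that $\ker X_{n-2,n} \subseteq \ker(1-\tau)$; since $X_{n-2,n} = \iota_{n-1}^n(T_{n-1}')$ only ``sees'' strands $2,\dots,n$ whereas $1-\tau$ cyclically involves strand $1$, this should follow by a position/degree bookkeeping argument in the spirit of the lemma proved at the end of Section \ref{Sec5.4} (tracking which tensor slot the ``active'' factor occupies), together with the diagonal-type scalar reduction of Remark \ref{useful}. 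Granting that inclusion, $T_n'w = 0$ and $X_{n-2,n}w\neq 0$ immediately give $(1-\tau)w = 0$, hence $\theta_n w = \tau^{n-1} w = w$, as desired.
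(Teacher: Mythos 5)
Your factorization of $T_n'$ is reversed, and this single slip is what derails the argument. From the definitions, $X_{n-2,n}=(1-\s_{n-1}^2\cdots\s_2)\cdots(1-\s_{n-1}^2)$ is the product of the last $n-2$ factors of $T_n'$, so the correct identity is $T_n'=(1-\s_{n-1}^2\s_{n-2}\cdots\s_1)\,X_{n-2,n}$: the ``all-strands'' factor sits on the \emph{left}, i.e.\ is applied \emph{after} $X_{n-2,n}$. (You even say the leftmost factor is the one involving all strands, but then write it on the right.) With the correct order the proof is short and is exactly the paper's: put $\tau=\s_{n-1}^2\s_{n-2}\cdots\s_1$ and $y=X_{n-2,n}w\neq 0$; then $T_n'w=0$ reads $(1-\tau)y=0$, and left-multiplying by $\sum_{k=0}^{n-2}\tau^k$ together with Proposition \ref{braididentity}(4) gives $\theta_n y=y$. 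Since $X_{n-2,n}\in\mc[\fb_n]$ preserves each summand $\mc[X_{\underline{i}}]$ of the decomposition (\ref{Eq:decomposition}) and the central element $\theta_n$ acts on each by a scalar, $y\neq 0$ pins that scalar to $1$, whence $\theta_nw=w$. No eigenspace analysis of $\tau$ and no injectivity of $X_{n-2,n}$ are needed.

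Because you read the product the other way, you are forced into ``$T_n'w=0$ implies $(1-\tau)w\in\ker X_{n-2,n}$'' and then into the inclusion $\ker X_{n-2,n}\subseteq\ker(1-\tau)$, which you correctly identify as the missing step but which is false in general: for $n=3$, any monomial $v_iv_jv_k$ with $q_{jk}q_{kj}=1$ lies in $\ker X_{1,3}=\ker(1-\s_2^2)$, yet $\tau=\s_2^2\s_1$ sends it to a scalar multiple of $v_jv_iv_k$, so it is generically not fixed by $\tau$. Moreover, even granting that inclusion you would only obtain $(1-\tau)^2w=0$ rather than $(1-\tau)w=0$ without a separate semisimplicity argument for $\tau$. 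So the proposal has a genuine gap; the repair is simply to use the correct order of the factors, after which your telescoping step closes the proof immediately.
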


\begin{proof}[Proof]
From definition, $T_n'w=(1-\s_{n-1}^2\s_{n-2}\cdots\s_1)X_{n-2,n}w$. If $X_{n-2,n}w\neq 0$, it will be a solution of the equation $(1-\s_{n-1}^2\s_{n-2}\cdots\s_1)x=0$. By Proposition \ref{braididentity}, multiplying both sides by $\sum_{k=0}^{n-2}(\s_{n-1}^2\s_{n-2}\cdots\s_1)^k$ gives $\theta_nX_{n-2,n}w=X_{n-2,n}w$. This implies $\theta_nw=w$ by applying the argument at the beginning of Section \ref{tensorrep}.
\end{proof}

\begin{corollary}\label{Cor:theta}
Let $v\in T^n(V)$ be a right pre-relation. Then $\theta_nv=v$.
\end{corollary}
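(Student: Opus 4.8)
The plan is to deduce Corollary \ref{Cor:theta} directly from Proposition \ref{Prop:theta} by unwinding the definition of a right pre-relation. If $v \in T^n(V)$ is a right pre-relation, then by definition there exists $w \in T^n(V)$ with $v = P_n w$ and $T_n v = 0$, together with the nondegeneracy condition $\iota_{n-1}^n(T_{n-1}') w \neq 0$, i.e. $X_{n-2,n} w \neq 0$ in the notation introduced just before Proposition \ref{Prop:theta}. The key identity to invoke is Proposition \ref{braididentity}(6), which gives $T_n P_n = T_n'$; hence $T_n' w = T_n P_n w = T_n v = 0$. Thus $w$ satisfies exactly the two hypotheses of Proposition \ref{Prop:theta}, and we conclude $\theta_n w = w$.

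It then remains to pass from $\theta_n w = w$ to $\theta_n v = v$. Since $v = P_n w$ and $P_n \in \mc[\mathfrak{B}_n]$ while $\theta_n = \Delta_n^2$ is central in $\mathfrak{B}_n$ by Proposition \ref{braididentity}(1) (for $n \geq 3$; the case $n = 2$ being covered by Proposition \ref{Prop:deg2}), the central element $\theta_n$ commutes with $P_n$ as operators on $V^{\ts n}$. Therefore $\theta_n v = \theta_n P_n w = P_n \theta_n w = P_n w = v$, which is the desired conclusion.

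I expect there to be essentially no obstacle here: the corollary is a two-line formal consequence of the proposition together with the centrality of $\theta_n$ and the factorization $T_n' = T_n P_n$. The only point requiring the tiniest bit of care is the low-degree case $n = 2$, where the centre statement of Proposition \ref{braididentity}(1) is phrased for $n \geq 3$; but for $n = 2$ one has $\theta_2 = \sigma_1^2$, $P_2 = 1 - \sigma_1$, and these manifestly commute, so the argument goes through verbatim (alternatively one appeals to Proposition \ref{Prop:deg2}). So the proof is simply: apply $T_n P_n = T_n'$ to rewrite $T_n v = 0$ as $T_n' w = 0$, note $X_{n-2,n} w \neq 0$ by hypothesis, invoke Proposition \ref{Prop:theta} to get $\theta_n w = w$, and commute $\theta_n$ past $P_n$ to transfer the fixed-point property from $w$ to $v$.
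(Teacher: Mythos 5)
Your proof is correct and follows essentially the same route as the paper: invoke Proposition \ref{Prop:theta} to get $\theta_n w = w$, then use centrality of $\theta_n$ to write $\theta_n v = \theta_n P_n w = P_n \theta_n w = v$. You are in fact slightly more careful than the paper, which does not explicitly verify the hypotheses $T_n'w = T_nP_nw = T_nv = 0$ and $X_{n-2,n}w \neq 0$ before applying the proposition, nor does it comment on the $n=2$ case.
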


\begin{proof}
By Proposition \ref{Prop:theta}, if $v=P_nw$ for some $w\in T^n(V)$, then $\theta_nw=w$. Therefore 
$$\theta_n v=\theta_nP_nw=P_n\theta_nw=P_nw=v.$$
\end{proof}

As a conclusion, to solve the equation $T_nx=0$ in aim of finding defining relations, it suffices to work inside the $\mc[\mathfrak{B}_n]$-module $\mc[X_{\underline{i}}]$ such that $\theta_n (v_{\underline{i}})=v_{\underline{i}}$.

\begin{corollary}
If $w\in T^n(V)$ is such that $T_n'w=0$ and for any $2\leq k\leq n-1$, $\iota_k^n(\theta_k)w\neq w$, then $\theta_nw=w$.
\end{corollary}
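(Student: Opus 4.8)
The plan is to run the recursive descent underlying Proposition~\ref{Prop:theta}, peeling off one factor of $T'_n$ at a time. First I would reduce to a single orbit: the operators $T'_n$, $\theta_n$, every $X_{m,n}=\iota^n_{m+1}(T'_{m+1})$ and every $\iota^n_k(\theta_k)$ lie in $\mc[\fb_n]$, hence stabilise each submodule $\mc[X_{\underline{j}}]$ ($\underline{j}\vdash n$), so I may assume $w\neq 0$ and $w\in\mc[X_{\underline{j}}]$ for a single $\underline{j}$. I would then record the structural input: $X_{n-1,n}=T'_n$, $X_{0,n}=1$, and $X_{m,n}\in\iota^n_k(\mc[\fb_k])$ for $k=m+2$ (since $\iota^n_{m+1}=\iota^n_{m+2}\circ\iota^{m+2}_{m+1}$), together with the recursion
\[
X_{m+1,n}=\bigl(1-\iota^n_{m+2}(\sigma_{m+1}^2\sigma_m\cdots\sigma_1)\bigr)\,X_{m,n},
\]
which comes from the identity $T'_{m+2}=(1-\sigma_{m+1}^2\sigma_m\cdots\sigma_1)\,\iota^{m+2}_{m+1}(T'_{m+1})$ in $\mc[\fb_{m+2}]$.

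Next I would locate the last non-vanishing stage of the descent. Since $X_{n-1,n}w=T'_nw=0$ while $X_{0,n}w=w\neq0$, there is a largest index $m_0$ with $0\le m_0\le n-2$ and $X_{m_0,n}w\neq0$; then $X_{m_0+1,n}w=0$. If $m_0=n-2$ I am done at once, for then $X_{n-2,n}w\neq0$ and $T'_nw=0$, so Proposition~\ref{Prop:theta} gives $\theta_nw=w$. Hence everything rests on excluding $m_0\le n-3$. Assume $m_0\le n-3$ and put $k=m_0+2\in\{2,\dots,n-1\}$. By the recursion, $X_{m_0+1,n}w=0$ reads $\bigl(1-\iota^n_k(\sigma_{k-1}^2\sigma_{k-2}\cdots\sigma_1)\bigr)(X_{m_0,n}w)=0$; applying $\iota^n_k$ to identity $(4)$ of Proposition~\ref{braididentity} for $\fb_k$ and multiplying on the left by $\iota^n_k\!\bigl(\textstyle\sum_{j=0}^{k-2}(\sigma_{k-1}^2\sigma_{k-2}\cdots\sigma_1)^j\bigr)$ yields
\[
\iota^n_k(\theta_k)\,(X_{m_0,n}w)=X_{m_0,n}w\neq0.
\]

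The decisive step is then to upgrade this to $\iota^n_k(\theta_k)w=w$, which contradicts the hypothesis and closes the case; this is the precise analogue of the move ``applying the argument in Section~\ref{tensorrep}'' in the proof of Proposition~\ref{Prop:theta}. The point is that $\theta_k$ acts on every monomial of $V^{\ts k}$ by a scalar, so $\iota^n_k(\theta_k)$ acts diagonally on $V^{\ts n}$ and commutes with $X_{m_0,n}$ (the latter lying in $\iota^n_k(\mc[\fb_k])$ while $\theta_k$ is central in $\mc[\fb_k]$). Consequently $X_{m_0,n}$ preserves each $\iota^n_k(\theta_k)$-eigenspace, and the displayed identity forces every eigencomponent of $w$ that is not killed by $X_{m_0,n}$ to carry eigenvalue $1$; when $m_0=0$ this already reads $\iota^n_k(\theta_k)w=w$, and in general I would finish by using the reduction to a single orbit together with the maximality of $m_0$ to show that $w$ has no eigencomponent of eigenvalue $\neq1$ lying in $\ker X_{m_0,n}$.

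The hard part is exactly this last elimination: propagating the fixed-point property from $X_{m_0,n}w$ back to $w$ is not formal, and making it rigorous amounts to understanding $\ker X_{m_0,n}$ inside a single $\mc[X_{\underline{j}}]$ well enough to rule out the ``phantom'' eigencomponents. This is the step I expect to carry the genuine difficulty of the statement, and it is where, to my reading, the brisk proof of Proposition~\ref{Prop:theta} tacitly leans on the tensor-space bookkeeping of Section~\ref{tensorrep}.
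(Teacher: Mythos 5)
Your argument is, up to relabelling, the paper's own: the paper assumes $X_{n-2,n}w=0$, takes the smallest $k$ with $X_{k-1,n}w\neq 0$ and $X_{k,n}w=0$, observes $X_{k,n}=\iota_{k+1}^n(T'_{k+1})$, and applies Proposition \ref{Prop:theta} transported along $\iota_{k+1}^n$ to conclude $\iota_{k+1}^n(\theta_{k+1})w=w$, contradicting the hypothesis. Your $m_0$ is the paper's $k-1$ (the two transition points coincide because $X_{m+1,n}$ factors through $X_{m,n}$, so the sequence $X_{m,n}w$ vanishes monotonically), and your $\iota^n_{m_0+2}(\theta_{m_0+2})$ is the paper's $\iota_{k+1}^n(\theta_{k+1})$. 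The only structural difference is that you unfold the proof of Proposition \ref{Prop:theta} instead of citing it, and you stop at the step ``from $\iota_k^n(\theta_k)(X_{m_0,n}w)=X_{m_0,n}w\neq 0$ deduce $\iota_k^n(\theta_k)w=w$''.

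That step is a genuine gap in your write-up. Your proposed remedy (``the reduction to a single orbit together with the maximality of $m_0$'') does not obviously work: maximality of $m_0$ does not exclude an eigencomponent $w_\lambda$ of $w$ for $\iota_k^n(\theta_k)$ with eigenvalue $\lambda\neq 1$ and $X_{m_0,n}w_\lambda=0$; such a component is invisible to the identity you derived but would destroy the conclusion $\iota_k^n(\theta_k)w=w$. What is needed is the finer decomposition $V^{\ts n}=\bigoplus_{\underline{l}\vdash k}V^{\ts(n-k)}\ts\mc[\fb_k].v_{\underline{l}}$, on each summand of which $\iota_k^n(\theta_k)$ acts by a scalar and which $X_{m_0,n}\in\iota_k^n(\mc[\fb_k])$ preserves, together with an argument (or a standing convention, as the paper tacitly adopts throughout this section by working inside a single such summand) ruling out components of $w$ lying in $\ker X_{m_0,n}$ with eigenvalue $\neq 1$. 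In fairness, this is precisely the point the paper compresses into ``the lemma above can be applied to this case'', whose own justification is ``the argument in Section \ref{tensorrep}'' --- an argument formulated for the central element $\theta_n$ acting by a single scalar on each $\mc[X_{\underline{i}}]$ with $\underline{i}\vdash n$, which, as you correctly note, does not literally transfer to the non-central $\iota_k^n(\theta_k)$. So you have located the crux accurately and reproduced everything else faithfully, but neither your sketch nor the argument it mirrors actually closes that step as written.
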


\begin{proof}[Proof]
By Proposition \ref{Prop:theta}, it suffices to show that $X_{n-2,n}w\neq 0$. Otherwise, take the smallest $k$ such that $X_{k-1,n}w\neq 0$ but $X_{k,n}w=0$. As $X_{k,n}=\iota_{k+1}^n(T_{k+1}')$, Proposition \ref{Prop:theta} can be applied to this case to give $\iota_{k+1}^n(\theta_{k+1})w=w$. This contradicts the hypothesis.
\end{proof}

The main result of this section is the following theorem.

\begin{theorem}\label{Relr}
The Hopf ideal generated by ${\Rel}_r$ is $\mathfrak{I}(V)$. 
\end{theorem}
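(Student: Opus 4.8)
The plan is to show the two inclusions separately. The inclusion $\langle {\Rel}_r\rangle_{\ideal}\subset \mathfrak{I}(V)$ is immediate: every right pre-relation $v$ satisfies $T_nv=0$, hence $\p_i^R(v)=0$ for all $i\in I$ by Lemma \ref{Lem5.2} (the identity (\ref{**})), so $v\in\ker S_n\subset\mathfrak{I}(V)$; since $\mathfrak{I}(V)$ is a Hopf ideal, the Hopf ideal it generates is still contained in $\mathfrak{I}(V)$. The substance is the reverse inclusion, and the strategy is to reduce it to Theorem \ref{Thm:1} together with the Corollary stating that the Hopf ideal generated by $\bigoplus_{n\geq 2}(\ker(T_n)\cap\im(P_n))$ is $\mathfrak{I}(V)$. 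So it suffices to show that every element of $\ker(T_n)\cap\im(P_n)$ lies in the Hopf ideal generated by ${\Rel}_r$.

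First I would fix $v=P_nw\in\ker(T_n)$ and argue by induction on $n$. If $v$ is itself a right pre-relation (i.e. $\iota_{n-1}^n(T_{n-1}')w\neq 0$, equivalently $X_{n-2,n}w\neq 0$), there is nothing to prove. Otherwise $X_{n-2,n}w=0$; then I want to descend in degree. The key is to use the filtration by the elements $X_{k,n}=\iota_{k+1}^n(T_{k+1}')$: pick the smallest $k$ with $X_{k-1,n}w\neq 0$ but $X_{k,n}w=0$ (if no such $k$ exists then already $X_{1,n}w=(1-\s_{n-1}^2)w=0$, a degenerate low case). Setting $w'=X_{k-1,n}w$, we have a nonzero solution of $\iota_{k+1}^n(T_{k+1}')w'=0$ sitting inside the last $k+1$ tensor slots, i.e. $w'=u\cdot r$ with $u\in T^{n-k-1}(V)$ and $r\in T^{k+1}(V)$ a genuine "pre-relation-type" element for the embedded braid group $\fb_{k+1}$, and by Corollary \ref{Cor:theta}/Proposition \ref{Prop:theta} the $\theta$-invariance is in force so Remark \ref{useful} applies. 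The plan is then to express $v=P_nw$, using that the missing factors of $P_n$ (the ones beyond $P_{k+1}$ in the product defining $P_n$) are of the form $1-\s_{n-1}\s_{n-2}\cdots\s_j$, so by Remark \ref{useful} acting by those on $u\cdot(\text{pre-relation of degree }k+1)$ stays in the ideal generated by that embedded pre-relation; combined with the induction hypothesis applied in degree $\leq k+1<n$, this puts $v$ into the Hopf ideal generated by ${\Rel}_r$. One also uses Proposition \ref{braididentity}(6) $T_nP_n=T_n'$ to translate between $T_n v=0$ and $T_n'w=0$ throughout, and Lemma \ref{Lemma:comm}(1) to see that applying $\p_i^R$ respects the ideal structure so that the Hopf-ideal closure can be taken at the end rather than tracked step by step.

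The main obstacle I anticipate is the bookkeeping in the inductive descent: when $v=P_nw$ fails to be a pre-relation, one must carefully identify \emph{which} lower-degree pre-relation it is built from and verify that the "extra" braid-group factors relating degree $n$ to degree $k+1$ genuinely land one in the two-sided (indeed Hopf) ideal generated by that lower pre-relation. This is exactly the type of argument carried out in the proof of the lemma inside Theorem \ref{Thm:1} — decomposing the operator ($T_n$ there, $P_n$ here) into a part that annihilates the relevant tensor factor, a part handled by $\iota$, and a part covered by Remark \ref{useful} — and the delicate point is that $P_n$ does not decompose as cleanly as $T_n$, so I would first establish the factorization $P_n=\iota_{k+1}^n(P_{k+1})\cdot(\text{product of }1-\s_{n-1}\cdots\s_j\text{ for }j\leq n-k-1)$ and check the commutation needed to move $\iota_{k+1}^n(P_{k+1})$ next to $w$. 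Once that structural identity is in hand the rest is an application of the already-proven machinery (Theorem \ref{Thm:1}, Proposition \ref{Cor:diff}, Remark \ref{useful}, Corollary \ref{Cor:theta}) plus the induction hypothesis.
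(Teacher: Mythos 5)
Your proposal follows essentially the same route as the paper: reduce via the corollary on $\ker(T_n)\cap\im(P_n)$, split on whether $X_{n-2,n}w$ vanishes, and in the degenerate case use the factorization $P_n=(1-\s_{n-1}\cdots\s_1)\cdots(1-\s_{n-1}\cdots\s_{k+1})\iota_{k+1}^n(P_{k+1})$ together with Remark \ref{useful} and induction on the degree. The only imprecision is that the vanishing $X_{k,n}w=0$ should be exploited by decomposing $w$ itself as $\sum_i u_i\ts w_{i,\underline{j}}$ with each $w_{i,\underline{j}}$ supported in the last $k+1$ tensor slots (rather than passing to $w'=X_{k-1,n}w$), but this is exactly the bookkeeping the paper carries out.
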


\begin{proof}
Let $w\in T^n(V)$ be a solution of the equation $T_n'x=X_{n-1,n}x=0$. There are two possibilities: 
\begin{enumerate}
\item $X_{n-2,n}w\neq 0$; it is clear that $P_nw$ is a right pre-relation.
\item $X_{n-2,n}w=0$; then there exists a smallest $k$ such that $X_{k-1,n}v\neq 0$ but $X_{k,n}v=0$.
\end{enumerate}

We would like to show that only relations fallen into the first case are interesting. To be more precise, if $w$ falls into the second case, $P_nw$ can be generated by lower degree elements in the first case. This is the following lemma.

\begin{lemma}\label{Lem:7}
If $w\in T^n(V)$ is an element such that $T_n'w=0$ and $X_{n-2,n}w=0$, then $P_nw$ is in the ideal generated by right pre-relations of lower degrees.
\end{lemma}

\begin{proof}[Proof]
The proof is executed by induction on $n$. There is nothing to prove for $n=2$.
\par
Let $w\in T^n(V)$ be such that $T_n'w=0$ and $X_{n-2,n}w=0$ then set $k$ be the smallest integer such that $X_{k-1,n}w\neq 0$ but $X_{k,n}w=0$. By definition of $P_n$, 
$$P_nw=(1-\s_{n-1}\cdots\s_1)\cdots (1-\s_{n-1}\cdots\s_{k+1})\iota_{k+1}^n(P_{k+1})w.$$
We write 
$$w=\sum_i\sum_{\underline{j}\vdash k+1}u_i\ts w_{i,\underline{j}},$$
where $u_i\in T^{n-k-1}(V)$ are linearly independent and $w_{i,\underline{j}}\in T^{k+1}(V)\cap \mc[X_{\underline{j}}]$. Recall that $X_{k,n}=\iota_{k+1}^n(T_{k+1}')$, then $X_{k,n}w=0$ implies that 
$$\sum_{\underline{j}\vdash k+1}T_{k+1}'w_{i,\underline{j}}=0.$$
As these $\mc[X_{\underline{j}}]$ have trivial intersection, for any $\underline{j}$, $X_{k,n}w_{i,\underline{j}}=0$.
\par
There are two cases:
\begin{enumerate}
\item $T_k'w_{i,\underline{j}}=0$. In this case, by applying the induction hypothesis to $w_{i,\underline{j}}$, $P_kw_{i,\underline{j}}$ is generated by right pre-relations of lower degrees. So $P_n(u_iw_{i,\underline{j}})$ is generated by right pre-relations of lower degrees by Remark \ref{useful}.
\item $T_k'w_{i,\underline{j}}\neq 0$, then $P_kw_{i,\underline{j}}$ is a right pre-relation of degree $k$ and $P_n(u_iw_{i,\underline{j}})$ is generated by right pre-relations of lower degree by Remark \ref{useful}.
\end{enumerate}
As a summary, for any $i$ and $\underline{j}$, $P_n(u_iw_{i,\underline{j}})$ is generated by right pre-relations lower degree, so is $P_nw$.
\end{proof}
By Corollary \ref{Cor:2}, to terminate the proof of the theorem, it suffices to show that the Hopf ideal generated by $\bigoplus_{n\geq 2}\left(\ker(T_n)\cap \im(P_n)\right)$ coincides with the Hopf ideal generated by $\Rel_r$. 
\par
Take $x\in\ker(T_n)\cap\im(P_n)$, there exists $w$ such that $P_nw=x$ and $T_n'w=P_nT_nx=0$. By the above argument, if $X_{n-2,n}w\neq 0$, then by definition $x\in\Rel_r$; if not, by Lemma \ref{Lem:7}, $x=P_nw$ is contained in the Hopf ideal generated by $\Rel_r$.
\end{proof}

\begin{example}\label{Ex:deg2}
We compute pre-relations of degree $2$. Since $P_2=Q_2$ and $T_2=U_2$, ${\Rel}_r^2$ coincides with ${\Rel}_l^2$. It suffices to consider each $\mc[X_{\underline{i}}]$ where $\underline{i}=(s,t)$. The following facts are clear by Proposition \ref{Prop:deg2}:
\begin{enumerate}
\item $T_2P_2=1-\theta_2$ acts as zero on ${\Rel}_r^2$, so it suffices to consider the fixed points of $\theta_2$;
\item $\theta_2 v_{\underline{i}}=v_{\underline{i}}$ if and only if $q_{st}q_{ts}=1$.
\end{enumerate}
These observations give the following characterization of ${\Rel}_r^2$:
$${\Rel}_r^2=\Span\{v_sv_t-q_{st}v_tv_s|\ s< t\ \textrm{such that }q_{ts}q_{st}=1\ \text{and}\ s=t,\ q_{ss}=-1\}.$$
There is no redundant relations in this list and it coincides with the set of constants of degree $2$.
\end{example}

\subsection{Balancing left and right}\label{Section:Balance}

The set of left and right constants or pre-relations may not coincide, we study in this subsection the symmetries between them. 
\par
The following lemma is clear by Proposition \ref{braididentity}.

\begin{lemma}
For any $n\geq 2$, $\Delta_nT_n=U_n\Delta_n$ and $\Delta_nP_n=Q_n\Delta_n$.
\end{lemma}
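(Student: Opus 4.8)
The plan is to reduce both identities to the conjugation rule of Proposition~\ref{braididentity}(2). First I would rewrite $\s_i\Delta_n=\Delta_n\s_{n-i}$ in the equivalent form $\Delta_n\s_i=\s_{n-i}\Delta_n$ (obtained by replacing $i$ with $n-i$, since $1\le i\le n-1$), and then extend it by a straightforward induction on word length to
\begin{equation*}
\Delta_n\,\s_{i_1}\s_{i_2}\cdots\s_{i_k}=\s_{n-i_1}\s_{n-i_2}\cdots\s_{n-i_k}\,\Delta_n
\end{equation*}
for an arbitrary sequence $(i_1,\dots,i_k)$ of indices. By $\mc$-linearity, for any element of $\mc[\fb_n]$ written as a linear combination of positive words, left multiplication by $\Delta_n$ equals the same combination with each $\s_i$ replaced by $\s_{n-i}$, then right multiplied by $\Delta_n$.

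Next I would apply this to $T_n=\sum_{k=0}^{n-1}\s_{n-1}\s_{n-2}\cdots\s_{n-k}$ (the $k=0$ summand being $1$): the word $\s_{n-1}\s_{n-2}\cdots\s_{n-k}$ is carried to $\s_1\s_2\cdots\s_k$, so $\Delta_nT_n=\left(\sum_{k=0}^{n-1}\s_1\s_2\cdots\s_k\right)\Delta_n=U_n\Delta_n$. For the second identity I would write $P_n=\prod_{j=1}^{n-1}(1-\s_{n-1}\s_{n-2}\cdots\s_j)$, with the factors ordered from left to right by increasing $j$ (so $j=1$ is leftmost, $j=n-1$ rightmost); the substitution sends the $j$-th factor to $(1-\s_1\s_2\cdots\s_{n-j})$, and moving $\Delta_n$ to the right through the factors one at a time yields $\Delta_nP_n=\prod_{j=1}^{n-1}(1-\s_1\s_2\cdots\s_{n-j})\,\Delta_n$. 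As $j$ runs from $1$ to $n-1$ the exponent $n-j$ runs from $n-1$ down to $1$, so this last product is precisely the defining product of $Q_n$, giving $\Delta_nP_n=Q_n\Delta_n$.

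I do not expect a genuine obstacle here; once the conjugation rule is available the proof is a short computation. The only point that requires some care is the bookkeeping of the factor order in $P_n$ and $Q_n$: one must check that the reindexing $j\mapsto n-j$, together with the fixed left-to-right order in which $\Delta_n$ is pushed through the factors, reproduces $Q_n$ rather than its opposite product — writing the factors out explicitly, as above, settles this.
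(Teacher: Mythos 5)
Your proof is correct and is exactly the argument the paper intends: the paper gives no details, merely asserting the lemma is ``clear from Proposition~\ref{braididentity}'', and your reduction to the conjugation rule $\s_i\Delta_n=\Delta_n\s_{n-i}$ of item (2), extended multiplicatively to words and applied to the explicit expressions of $T_n$, $P_n$, $U_n$, $Q_n$, is precisely how that assertion is justified. The bookkeeping of the factor order in $P_n$ versus $Q_n$ is handled correctly.
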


The Garside element gives the symmetry between the left and right pre-relations.

\begin{corollary}
The Garside element $\Delta_n$ induces a linear isomorphism ${\Rel}_r^n\cong {\Rel}_l^n$.
\end{corollary}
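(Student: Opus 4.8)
The plan is to combine the two facts recorded just above: the identities $\Delta_n T_n = U_n \Delta_n$ and $\Delta_n P_n = Q_n \Delta_n$ from the preceding lemma, together with the fact that $\Delta_n$ acts invertibly on $T^n(V)$ (its square $\theta_n$ is central and, on each submodule $\mathbb{K}[X_{\underline{i}}]$ appearing in a pre-relation, acts as the identity by Corollary \ref{Cor:theta}). Since $\Delta_n$ is a unit in $\mathbb{K}[\mathfrak{B}_n]$, the map it induces on $T^n(V)$ is automatically a linear isomorphism; the only content is that it carries ${\Rel}_r^n$ onto ${\Rel}_l^n$.

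First I would take a right pre-relation $v = P_n w$ with $T_n v = 0$ and $\iota_{n-1}^n(T_{n-1}')w \neq 0$, and set $v' = \Delta_n v$, $w' = \Delta_n w$. Then $v' = \Delta_n P_n w = Q_n \Delta_n w = Q_n w'$, which realizes $v'$ as being in the image of $Q_n$. Next, $U_n v' = U_n \Delta_n v = \Delta_n T_n v = 0$, so $v'$ is killed by $U_n$. It remains to check the nondegeneracy condition $\iota_{n-1}^n(U_{n-1}')w' \neq 0$. For this I would want an analogue of $\Delta_n T_n = U_n \Delta_n$ at one level down, intertwined with the embeddings $\iota_{n-1}^n$: concretely, by Proposition \ref{braididentity}(2) conjugation by $\Delta_n$ sends $\sigma_i$ to $\sigma_{n-i}$, which matches the symmetry that turns $\iota_{n-1}^n(T_{n-1}')$ (built from $\sigma_{n-1}, \ldots, \sigma_2$) into $\iota_{n-1}^n(U_{n-1}')$ (built from $\sigma_1, \ldots, \sigma_{n-2}$). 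So $\Delta_n \,\iota_{n-1}^n(T_{n-1}') = \iota_{n-1}^n(U_{n-1}')\,\Delta_n$ should hold, and applying this to $w$ and using invertibility of $\Delta_n$ gives $\iota_{n-1}^n(U_{n-1}')w' = \Delta_n\,\iota_{n-1}^n(T_{n-1}')w \neq 0$. Hence $v'$ is a left pre-relation, so $\Delta_n$ maps ${\Rel}_r^n$ into ${\Rel}_l^n$; the symmetric argument using $\Delta_n^{-1}$ (or noting $\Delta_n^2 = \theta_n$ acts as the identity on the relevant submodules) gives the inverse inclusion, and linearity of $w \mapsto \Delta_n w$ finishes the proof.

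The main obstacle I anticipate is precisely the verification of the identity $\Delta_n\,\iota_{n-1}^n(T_{n-1}') = \iota_{n-1}^n(U_{n-1}')\,\Delta_n$ (and, if one wants to be careful about which $\Delta$ — the full $\Delta_n$ versus the Garside element of the smaller braid group $\mathfrak{B}_{n-1}$ sitting inside — the bookkeeping of embeddings). This is a pure braid-group computation: write $\iota_{n-1}^n(T_{n-1}')$ and $\iota_{n-1}^n(U_{n-1}')$ out as products of the elementary factors $1 - \sigma_j^2 \sigma_{j-1}\cdots$, and push $\Delta_n$ across each factor using $\sigma_i \Delta_n = \Delta_n \sigma_{n-i}$, checking that the reindexing $i \mapsto n-i$ converts each right-Dynkin-type factor into the corresponding left one. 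Everything else (membership in $\operatorname{im} Q_n$, being killed by $U_n$, linearity, bijectivity) is formal once this intertwining is in hand.
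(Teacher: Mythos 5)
Your proposal is correct and follows essentially the same route as the paper: invertibility of $\Delta_n$ via $\Delta_n^2=\theta_n$ acting as the identity on pre-relations, and the intertwining identities $\Delta_nT_n=U_n\Delta_n$, $\Delta_nP_n=Q_n\Delta_n$ to transport the defining conditions. The only difference is that you spell out the conjugation $\sigma_i\Delta_n=\Delta_n\sigma_{n-i}$ needed to convert the nondegeneracy condition $\iota_{n-1}^n(T_{n-1}')w\neq 0$ into its left counterpart, a step the paper compresses into the phrase ``comes from the injectivity of $\Delta_n$''; your own caveat about the embedding bookkeeping is well placed, since the conjugate of $\iota_{n-1}^n(T_{n-1}')$ is $U_{n-1}'$ under the standard inclusion $\mathfrak{B}_{n-1}\subset\mathfrak{B}_n$ on the first $n-1$ strands, which is indeed the intended left-hand condition.
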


\begin{proof}[Proof]
According to Proposition \ref{Prop:theta}, $\Delta_n^2=\theta_n$ acts as the identity on ${\Rel}_r^n$, thus $\Delta_n$ is a linear isomorphism. It suffices to show that the image of $\Delta_n$ is contained in ${\Rel}_l^n$.
\par
We verify that $\Delta_nw\in \textrm{Rel}_l^n$ for $w\in {\Rel}_r^n$. The first condition holds by the above lemma and the other point comes from the injectivity of $\Delta_n$. If we write $w=P_nv$, then apply the above lemma again, $\Delta_nw=\Delta_nP_nv=Q_n\Delta_n v$ implies $\Delta_n w$ is in the image of $Q_n$. 
\end{proof}

A similar result holds when the pre-relations are replaced by constants.

\begin{corollary}
The Garside element $\Delta_n$ induces a linear isomorphism ${\Con}_r^n\cong {\Con}_l^n$.
\end{corollary}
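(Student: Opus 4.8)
The plan is to adapt verbatim the proof of the preceding corollary for pre-relations, since the two statements differ only in replacing the Dynkin elements $P_n, Q_n$ by nothing and working with the kernels defining constants rather than pre-relations. First I would recall that $\Con_r^n = \ker(T_n: T^n(V)\to T^n(V))$ and $\Con_l^n = \ker(U_n: T^n(V)\to T^n(V))$, and invoke the lemma stating $\Delta_n T_n = U_n \Delta_n$.

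The key steps, in order: (1) Show $\Delta_n$ maps $\Con_r^n$ into $\Con_l^n$. Take $w \in \Con_r^n$, so $T_n w = 0$; then $U_n \Delta_n w = \Delta_n T_n w = 0$, hence $\Delta_n w \in \Con_l^n$. (2) Show $\Delta_n$ is injective on $\Con_r^n$. By Proposition \ref{Prop:theta} applied to a right constant --- or more directly, since a right constant $w$ satisfies $T_n w = 0$, hence $T_n' w = T_n P_n w$... actually the cleanest route is to observe that $\Delta_n$ is invertible in $\mc[\fb_n]$ as a group element, so $\Delta_n$ is injective on all of $T^n(V)$, in particular on $\Con_r^n$; one does not even need the $\theta_n$-invariance here, unlike in the pre-relation case. (3) Symmetrically, using the relation $\Delta_n U_n = T_n \Delta_n$ (which follows from Proposition \ref{braididentity}(2) exactly as the cited lemma does, or by replacing $\Delta_n$ with $\Delta_n^{-1}$ and $n-i$ bookkeeping), $\Delta_n$ maps $\Con_l^n$ into $\Con_r^n$, so $\Delta_n: \Con_r^n \to \Con_l^n$ is a bijection. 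Combining (1)--(3) gives the claimed linear isomorphism.

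There is essentially no obstacle here: the argument is strictly simpler than the pre-relation version because the condition "$v = P_n w$ for some $w$" has no analogue for constants, and injectivity comes for free from $\Delta_n$ being a unit in the group algebra. The only point requiring a line of care is producing the reverse inclusion $\Delta_n(\Con_l^n)\subseteq \Con_r^n$: from $\sigma_i \Delta_n = \Delta_n \sigma_{n-i}$ one gets, by the same manipulation as in the unstated lemma, $\Delta_n U_n = T_n \Delta_n$, whence $w \in \Con_l^n \Rightarrow T_n \Delta_n w = \Delta_n U_n w = 0$. So the proof reads:

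\begin{proof}[Proof]
Recall $\Con_r^n=\ker(T_n)$ and $\Con_l^n=\ker(U_n)$ on $T^n(V)$. Since $\Delta_n$ is invertible in $\mc[\fb_n]$, it acts as a linear automorphism of $T^n(V)$; in particular its restriction to $\Con_r^n$ is injective. By the lemma above, for $w\in\Con_r^n$ we have $U_n\Delta_n w=\Delta_n T_n w=0$, so $\Delta_n w\in\Con_l^n$. Conversely, from $\s_i\Delta_n=\Delta_n\s_{n-i}$ (Proposition \ref{braididentity}) one obtains $\Delta_n U_n=T_n\Delta_n$ in the same way, so for $w\in\Con_l^n$ we get $T_n\Delta_n w=\Delta_n U_n w=0$, i.e. $\Delta_n w\in\Con_r^n$. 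Hence $\Delta_n:\Con_r^n\to\Con_l^n$ is a linear isomorphism.
\end{proof}
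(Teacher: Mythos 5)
Your argument is correct, but it reaches bijectivity by a different mechanism than the paper. You establish $\Delta_n(\Con_r^n)\subseteq\Con_l^n$ exactly as the paper does (via $U_n\Delta_n=\Delta_nT_n$), and then get injectivity ``for free'' from the fact that $\Delta_n$ is a group element, hence acts invertibly on $V^{\ts n}$; the reverse containment $\Delta_n(\Con_l^n)\subseteq\Con_r^n$ follows from the companion relation $\Delta_nU_n=T_n\Delta_n$, which you derive correctly from $\s_i\Delta_n=\Delta_n\s_{n-i}$. The paper instead decomposes $V^{\ts n}=\bigoplus\mc[X_{\underline{i}}]$ and uses that the central element $\theta_n=\Delta_n^2$ acts by an invertible scalar on each summand, so that $\Delta_n$ restricts to an isomorphism of each $\mc[X_{\underline{i}}]\cap\Con_r^n$ onto $\mc[X_{\underline{i}}]\cap\Con_l^n$; this parallels the pre-relation corollary, where $\theta_n$ acts as the identity. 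Your route is more elementary and avoids the decomposition entirely. The one place where your write-up is slightly too quick is the final ``Hence'': from injectivity of $\Delta_n$ on $\Con_r^n$ together with the two containments you still need one more line for surjectivity onto $\Con_l^n$ --- either observe that $\Delta_n^{-1}$ also intertwines, i.e.\ $T_n\Delta_n^{-1}=\Delta_n^{-1}U_n$, so $\Delta_n^{-1}(\Con_l^n)\subseteq\Con_r^n$ and $\Delta_n^{-1}$ is a two-sided inverse of the restriction, or invoke finite-dimensionality of $V^{\ts n}$ together with injectivity in both directions to equate dimensions. Either patch is immediate, so this is a presentational point rather than a gap.
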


\begin{proof}[Proof]
It is clear that $\Delta_n$ sends ${\Con}_r^n$ to ${\Con}_l^n$, then it suffices to show that $\Delta_n$ is an isomorphism.
\par
Thanks to the decomposition (\ref{Eq:decomposition}) and notations therein, we can decompose ${\Con}_r^n$ and ${\Con}_l^n$ into direct sums of the $\mc[\mathfrak{B}_n]$-modules $\mc[X_{\underline{i}}]$ for $\underline{i}\vdash n$ such that the action of $\theta_n$ is given by an invertible scalar on each summand space. So $\Delta_n$ induces a linear isomorphism
$$\mc[X_{\underline{i}}]\cap {\Con}_r^n\cong \mc[X_{\underline{i}}]\cap {\Con}_l^n$$
and therefore a linear isomorphism between ${\Con}_r^n$ and ${\Con}_l^n$.
\end{proof}

\section{Generalized quantum groups}\label{Sec5.6}

\subsection{Generalized quantum groups}
For a Nichols algebra $\mathfrak{B}(V)$ associated to a Yetter-Drinfeld module $V\in{}^H_H\mathcal{YD}$, the bosonization $\mathfrak{B}(V)\# H$ is a true Hopf algebra \cite{AS02}. This construction, once applied to the Nichols algebra of diagonal type associated to the data of a symmetrizable Kac-Moody Lie algebra, gives the positive or negative part of the corresponding quantum group. But here, we would like to define them in a more direct way.
\par
Let $\mc(q)$, the field of rational functions in one variable over $\mc$, be the base field in this subsection.

\begin{definition}
Let $A=(q_{ij})_{1\leq i,j\leq N}$ be a braiding matrix in $M_N(\mc(q))$ such that $q_{ij}=q^{n_{ij}}$ for some $n_{ij}\in\mathbb{Z}$. 
\begin{enumerate}
\item $T^{\leq 0}(A)$ is defined as the Hopf algebra generated by $F_i$, $K_i^{\pm 1}$ for $i\in I$ with relations:
$$K_iF_jK_i^{-1}=q_{ij}^{-1}F_j,\,\,\, K_iK_i^{-1}=K_i^{-1}K_i=1,$$
$$\Delta(F_i)=K_i\ts F_i+F_i\ts 1,\ \ \Delta(K_i^{\pm 1})=K_i^{\pm 1}\ts K_i^{\pm 1},\ \ \ve(F_i)=0,\ \ \ve(K_i)=1;$$
\item $T^{\geq 0}(A)$ is defined as the Hopf algebra generated by $E_i$, $K_i'^{\pm 1}$ for $i\in I$ with relations:
$$K_i'E_jK_i'^{-1}=q_{ij}E_j,\,\,\, K_i'K_i'^{-1}=K_i'^{-1}K_i'=1,$$
$$\Delta(E_i)=1\ts E_i+E_i\ts K_i'^{-1},\ \ \Delta(K_i'^{\pm 1})=K_i'^{\pm 1}\ts K_i'^{\pm 1},\ \ \ve(E_i)=0,\ \ \ve(K_i')=1;$$
\item $D^{\leq 0}(A)$ (resp. $D^{\geq 0}(A)$) is defined as the quotient of $T^{\leq 0}(A)$ (resp. $T^{\geq 0}(A)$) by the biideal generated by the right (resp. left) pre-relations.
\end{enumerate}
\end{definition}
We define a generalized Hopf pairing $\vp:T^{\geq 0}(A)\times T^{\leq 0}(A)\ra \mc(q)$ such that for any $i,j\in I$,
$$\vp(E_i,F_j)=-\frac{\delta_{ij}}{q-q^{-1}},\,\,\, \vp(K_i',K_j)=q_{ij},$$
$$\vp(E_i,K_j^{\pm 1})=\vp(K_i'^{\pm 1},F_j)=0.$$
By Theorem \ref{Relr}, pre-relations generate the defining ideal. It is shown in \cite{AG99}, Theorem 3.2.29 that radicals of the generalized Hopf pairing coincide with the defining ideal in $T^{\geq 0}(A)$ and $T^{\leq 0}(A)$, hence $\vp$ induces a non-degenerate generalized Hopf pairing $\vp:D^{\geq 0}(A)\times D^{\leq 0}(A)\ra \mc(q)$.
\par
The following quantum double construction allows us to define the generalized quantum group.

\begin{definition}[\cite{KRT94}, Theorem 3.2]
Let $A$, $B$ be two Hopf algebras with invertible antipodes and $\vp$ be
a generalized Hopf pairing between them. Their quantum double $D_\vp(A,B)$ is defined by:
\begin{enumerate}
\item as a vector space, it is $A\ts B$;
\item as a coalgebra, it is the tensor product of coalgebras $A$ and $B$;
\item as an algebra, the multiplication is given by: for $a,a'\in A$ and $b,b'\in B$,
$$(a\ts b)(a'\ts b')=\sum \vp(S^{-1}(a_{(1)}'),b_{(1)})\vp(a_{(3)}',b_{(3)})aa_{(2)}'\ts b_{(2)}b'.$$
\end{enumerate}
\end{definition}

\begin{definition}
Suppose moreover that $A$ is a symmetric matrix. The generalized quantum group $D_q(A)$ associated to the braiding matrix $A$ is defined by:
$$D_q(A)=D_\vp(D^{\geq 0}(A),D^{\leq 0}(A))/(K_i-K_i'|\,i\in I),$$
where $(K_i-K_i'|\,i\in I)$ is the Hopf ideal generated by $K_i-K_i'$ for $i\in I$.
\end{definition}

We can similarly define the Hopf algebra $T_q(A)$ by replacing $D^{\geq 0}(A)$ and $D^{\leq 0}(A)$ by $T^{\geq 0}(A)$ and $T^{\leq 0}(A)$. Then $D_q(A)$ is the quotient of $T_q(A)$ by the Hopf ideal generated by the defining ideals.
\par
A routine computation gives the commutation relation between $E_i$ and $F_j$:
$$[E_i,F_j]=\delta_{ij}\frac{K_i-K_i^{-1}}{q-q^{-1}}.$$

\begin{remark}
We use the notation $D_q(A)$ instead $U_q(A)$ as it may not be related to the universal enveloping algebra associated to a Kac-Moody Lie algebra. This phenomenon will be examined in Example \ref{Ex:main}.
\end{remark}

\subsection{Averaged quantum group}

We will be interested in a particular case of the above construction where the braiding matrix arises from a generalized Cartan matrix.
\par
Let $C=(c_{ij})_{1\leq i,j\leq N}$ be a generalized Cartan matrix in $M_N(\mathbb{Z})$, i.e., a matrix of integral entries satisfying: 
\begin{enumerate}
\item $c_{ii}=2$;
\item for any $i\neq j$, $c_{ij}\leq 0$;
\item $c_{ij}=0$ implies $c_{ji}=0$.
\end{enumerate}
In the following discussion, we take $K=\mc(q^{\frac{1}{2}})$ as the ground field since elements in our matrices may be in the additive group $\displaystyle\frac{1}{2}\mathbb{Z}$.

\begin{definition}
Let $C\in M_N(\mathbb{Z})$ be a generalized Cartan matrix.
\begin{enumerate}
\item The averaged matrix associated to $C$ is defined by $\overline{C}=(\overline{c}_{ij})_{1\leq i,j\leq N}\in M_N(\mathbb{Q})$ where $\overline{c}_{ij}=\frac{1}{2}(c_{ij}+c_{ji})$. We denote $A=(q^{\overline{c}_{ij}})_{1\leq i,j\leq N}$: it is a symmetric matrix.
\item The $q$-enveloping algebra $U_q(C)$ associated to $C$ is defined by: 
\begin{enumerate}
\item if $C$ is symmetrizable, we respect the original definition of the quantized enveloping algebra associated to the Kac-Moody Lie algebra $\g(C)$ as a $\mc(q^{\frac{1}{2}})$-algebra;
\item if $C$ is non-symmetrizable, it is $D_q(A)$ as a $\mc(q^{\frac{1}{2}})$-algebra.
\end{enumerate}
\end{enumerate}
\end{definition}

We let $D^{>0}(A)$ (resp. $D^{<0}(A)$) denote the subalgebra of $D^{\geq 0}(A)$ (resp. $D^{\leq 0}(A)$) generated by $E_i$ (resp. $F_i$) for $i\in I$. They are Nichols algebras associated to the braiding matrix $A=(q^{\overline{c}_{ij}})$ (resp. $A'=(q^{-\overline{c}_{ij}})$). The following result due to Andruskiewitsch and Schneider shows that passing to the averaged matrix will not lose too much information.

\begin{proposition}[\cite{AS02}]\label{Prop:AS}
Let $V$ and $V'$ be two Yetter-Drinfel'd modules of diagonal type with braiding matrices $(q_{ij})_{1\leq i,j\leq N}$ and $(q_{ij}')_{1\leq i,j\leq N}$ satisfying $q_{ij}q_{ji}=q_{ij}'q_{ji}'$ for any $i,j\in I$ with respect to bases $v_1,\cdots,v_N$ of $V$ and $v_1',\cdots,v_N'$ of $V'$. Then
\begin{enumerate}
\item there exists a linear isomorphism $\psi:\mathfrak{B}(V)\ra \mathfrak{B}(V')$ such that for any $i\in I$, $\psi(v_i)=v_i'$;
\item this linear map $\psi$ almost preserves the algebra structure: for any $i,j\in I$,
$$\psi(v_iv_j)=\left\{\begin{matrix}q_{ij}'q_{ij}^{-1}v_i'v_j' & \textrm{if}\ \ i\leq j ;\\ 
v_i'v_j', & \textrm{if}\ \ i>j.\end{matrix}\right.
$$
\end{enumerate}
\end{proposition}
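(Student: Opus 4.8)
This is a cocycle-twist phenomenon: a Nichols algebra of diagonal type depends, degree by degree, only on the \emph{symmetrized} braiding data $q_{ij}q_{ji}$, and Proposition \ref{schauenburg} is what makes this usable here. Identify $V$ with $V'$ as vector spaces via $v_i\leftrightarrow v_i'$, so that $V^{\ts n}$ carries two actions of $\fb_n$: the action $\rho_q$ coming from the braiding $(q_{ij})$ and the action $\rho_{q'}$ coming from $(q_{ij}')$. Set $\tau_{ij}=q_{ij}'q_{ij}^{-1}$; then the hypothesis $q_{ij}q_{ji}=q_{ij}'q_{ji}'$ says precisely that $\tau_{ij}\tau_{ji}=1$ for all $i,j$, and one also has $\tau_{ii}=1$ since $q_{ii}=q_{ii}'$ (both equal $q^2$ in the case of interest).

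The first step is to build, for each $n$, a linear automorphism $\Phi_n$ of $V^{\ts n}$ that is diagonal in the monomial basis, say $\Phi_n(v_{a_1}\ts\cdots\ts v_{a_n})=\lambda_{(a_1,\dots,a_n)}\,v_{a_1}\ts\cdots\ts v_{a_n}$, and intertwines the two actions: $\Phi_n\,\rho_q(g)=\rho_{q'}(g)\,\Phi_n$ for all $g\in\fb_n$. Since $\rho_q(\s_k)$ sends the monomial indexed by a word $\underline{a}=(a_1,\dots,a_n)\in\{1,\dots,N\}^n$ to $q_{a_ka_{k+1}}$ times the one indexed by $s_k\underline{a}$ (the word with its $k$-th and $(k{+}1)$-st letters swapped), intertwining $\s_1,\dots,\s_{n-1}$ is equivalent to the recursion
$$\lambda_{s_k\underline{a}}=\tau_{a_ka_{k+1}}\,\lambda_{\underline{a}}\qquad(1\le k\le n-1).$$
I would fix $\lambda$ by putting it equal to $1$ on one representative of each $\mathfrak{S}_n$-orbit of words and propagating it along adjacent transpositions that genuinely move the word, then verify path-independence: the relations $s_k^2=e$, $s_ks_l=s_ls_k$ $(|k-l|\ge2)$ and $s_ks_{k+1}s_k=s_{k+1}s_ks_{k+1}$ each give an identity among products of $\tau$'s which holds because scalars commute and $\tau_{ij}\tau_{ji}=1$ (with $\tau_{ii}=1$ taking care of transpositions fixing the word). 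Taking each one-letter word $(i)$ as its own representative makes $\Phi_1=\id$.

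Now $S_n=\sum_{\s\in\mathfrak{S}_n}T_\s$ is a \emph{fixed} element of $\mc[\fb_n]$, so the intertwining relation gives $\Phi_n\circ\rho_q(S_n)=\rho_{q'}(S_n)\circ\Phi_n$; as $\Phi_n$ is bijective it maps $\ker\rho_q(S_n)$ isomorphically onto $\ker\rho_{q'}(S_n)$. By Proposition \ref{schauenburg}, $\mathfrak{N}(V)=\bigoplus_n V^{\ts n}/\ker\rho_q(S_n)$ and $\mathfrak{N}(V')=\bigoplus_n V^{\ts n}/\ker\rho_{q'}(S_n)$, hence $\bigoplus_n\Phi_n$ descends to a graded linear isomorphism $\psi:\mathfrak{N}(V)\ra\mathfrak{N}(V')$; since $\Phi_1=\id$ it satisfies $\psi(v_i)=v_i'$, giving (1). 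For (2), $\psi(v_iv_j)$ is the image of $\Phi_2(v_i\ts v_j)=\lambda_{(i,j)}\,v_i'\ts v_j'$, so it equals $\lambda_{(i,j)}v_i'v_j'$; evaluating $\lambda_{(i,j)}$ from the recursion $\lambda_{(j,i)}=\tau_{ij}\lambda_{(i,j)}$, with the degree-$2$ orbit representatives chosen suitably, yields the two-case formula of the statement.

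The one substantive point is the well-definedness of the $\lambda_{\underline{a}}$'s, i.e. path-independence around the braid relation; this is precisely where the hypothesis $q_{ij}q_{ji}=q_{ij}'q_{ji}'$ is used (it is the vanishing of the obstruction of the ``cocycle'' $\tau$). Everything else is routine: the descent to $\mathfrak{N}$ is immediate from Proposition \ref{schauenburg}, and (2) is a one-line evaluation once the normalization is fixed. Alternatively, one can realize $\psi$ as a $2$-cocycle deformation of the bosonization $\mathfrak{N}(V)\#\mc[G]$, but the degreewise argument above stays within the present setup.
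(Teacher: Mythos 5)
The paper does not prove this proposition: it is imported from Andruskiewitsch--Schneider with a citation only, so there is no in-paper argument to compare yours against. Judged on its own terms, your strategy is the right one and is essentially the standard cocycle-twist argument made explicit in the monomial basis: the recursion $\lambda_{s_k\underline{a}}=\tau_{a_ka_{k+1}}\lambda_{\underline{a}}$ is exactly the condition $\Phi_n\rho_q(\s_k)=\rho_{q'}(\s_k)\Phi_n$, your three consistency checks are the correct ones (both sides of the braid relation produce $\tau_{ab}\tau_{ac}\tau_{bc}$), and the descent through Proposition \ref{schauenburg} is immediate. You could bypass the path-independence discussion entirely by exhibiting the solution in closed form, e.g. $\lambda_{\underline{a}}=\prod_{k<l,\ a_k>a_l}\tau_{a_ka_l}^{-1}$ (product over inversions of the word), which manifestly satisfies the recursion and is manifestly well defined; this also disposes of the stabilizer of the chosen representative (words with repeated letters), a point your orbit-and-propagation formulation leaves implicit.

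Two points need repair. First, applying the recursion to a word with $a_k=a_{k+1}=i$ forces $\tau_{ii}=1$, i.e. $q_{ii}=q'_{ii}$, whereas the stated hypothesis only gives $\tau_{ii}^2=1$; you are using an unstated (though harmless in the paper's application) strengthening. Second, your closing claim that representatives ``chosen suitably'' yield the two-case formula of the statement cannot be cashed out. Already in degree $2$, carrying $\ker S_2^{q}=\Span\{v_iv_j-q_{ij}v_jv_i\}$ onto $\ker S_2^{q'}$ forces $\lambda_{(j,i)}=\tau_{ij}\lambda_{(i,j)}$ --- your recursion --- and then no normalization of the orbit $\{(i,j),(j,i)\}$ gives $(\lambda_{(i,j)},\lambda_{(j,i)})=(\tau_{ij},1)$, since that would force $\tau_{ij}^2=1$. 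The achievable normalizations are $(1,\tau_{ij})$ or $(\tau_{ij}^{-1},1)$, i.e. the nontrivial factor your map can produce is $q_{ij}q_{ij}'^{-1}$, the reciprocal of the stated $q'_{ij}q_{ij}^{-1}$. (Indeed, with the paper's convention $\s(v_i\ts v_j)=q_{ij}v_j\ts v_i$, the displayed formula is not even compatible with the relation $v_iv_j=q_{ij}v_jv_i$ in $\mathfrak{N}(V)$ unless $\tau_{ij}^2=1$, so this is a braiding-convention or sign slip in the statement rather than a defect of your method.) A complete proof must actually compute this degree-$2$ constant and reconcile the convention rather than assert the match.
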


\begin{remark}
We let $C$ be a non-symmetrizable generalized Cartan matrix. By Remark 1 and Theorem 21 in \cite{Ros98}, in order that $D^{<0}(A)$ to be of finite Gelfan'd-Kirillov dimension, the matrix $\overline{C}$ must be in $M_N(\mathbb{Z})$. This implies that a large number of algebras we are considering are of infinite Gelfand-Kirillov dimensions.
\end{remark}

\subsection{Bar involution in symmetric case}\label{Sec:Bar}

In this subsection, we suppose moreover that the braiding matrix is symmetric: that is to say, $q_{ij}=q_{ji}$ for any $i,j\in I$. This is always the case when the $q$-enveloping algebra $U_q(C)$ is under consideration.
\par
This hypothesis allows us to define the bar involution on Nichols algebras, which is fundamental in the study of quantum groups, especially for canonical (global crystal) bases.

\begin{definition}
The bar involution $-:T(V)\ra T(V)$ is a $\mc$-linear automorphism defined by $q^{\frac{1}{2}}\mapsto q^{-\frac{1}{2}}$ and $v_i\mapsto v_i$.
\end{definition}

\begin{definition}
For any $i\in I$, we define $\dd_i^R$, $\dd_i^L\in {\End}_{K}(T(V))$ to be the $\mc(q^{\frac{1}{2}})$-linear maps satisfying that for any monomial $v_{i_1}\cdots v_{i_l}$ in $T(V)$,
$${\dd}_i^R(v_{i_1}\cdots v_{i_l})=\overline{\p_i^R(v_{i_1}\cdots v_{i_l})},\ \ {\dd}_i^L(v_{i_1}\cdots v_{i_l})=\overline{\p_i^L(v_{i_1}\cdots v_{i_l})}$$
\end{definition}

We start by showing that bar involution descends to Nichols algebras. The following lemma will be needed.

\begin{lemma}\label{Lem:comm}
For any $i,j\in I$, $\p_j^R{\dd}_i^R=q_{ij}^{-1}{\dd}_i^R\p_j^R$.
\end{lemma}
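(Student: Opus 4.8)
The plan is to reduce the identity $\p_j^R \dd_i^R = q_{ij}^{-1}\dd_i^R \p_j^R$ to a direct computation on a single monomial $v_{i_1}\cdots v_{i_l}$, since both sides are $\mc(q^{1/2})$-linear maps and monomials span $T(V)$. First I would recall the explicit form of the right differential operator: for a monomial $x=v_{i_1}\cdots v_{i_l}$, the quantity $\p_j^R(x)$ is obtained from the last-letter extraction dictated by the coproduct, namely $\p_j^R(v_{i_1}\cdots v_{i_l}) = \delta_{j,i_l}\, c\, v_{i_1}\cdots v_{i_{l-1}}$ for a scalar $c$ determined by the braiding (coming from the $K$-factors that pass through), plus lower terms; more precisely, using $\Delta(v_{i_1}\cdots v_{i_l})$ and the pairing condition $\vp(v_j,-)$, one gets a sum over splitting off one $v_j$ from the word, each term weighted by a monomial in the $q_{st}$'s. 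The key point is that $\p_j^R$ acts by stripping a letter from the \emph{right} while $\dd_i^R$ — being $\overline{\p_i^R}$ on monomials — does the same but with all braiding scalars bar-conjugated.

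The main step is then a bookkeeping comparison of the braiding coefficients produced by the two orders of composition. Applying $\dd_i^R$ first and then $\p_j^R$ peels off $v_i$ (with bar-conjugated coefficients) and then $v_j$; applying $\p_j^R$ first and then $\dd_i^R$ peels them in the opposite order. Since $T(V)$ is of diagonal type, the scalar discrepancy between extracting $v_i$ then $v_j$ versus $v_j$ then $v_i$ from the same positions is exactly governed by a single crossing of a $v_i$ past a $v_j$, which contributes the factor $q_{ij}$ (or $q_{ij}^{-1}$); the bar involution on one of the two operators converts the ambient scalars but the \emph{relative} factor between the two extraction orders survives as a power of $q_{ij}$ that is not bar-invariant, and one checks the sign of the exponent works out to $q_{ij}^{-1}$ on the right-hand side. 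Concretely I would write $\p_j^R(v_{i_1}\cdots v_{i_l}) = \sum_{k:\, i_k=j} \lambda_k\, v_{i_1}\cdots \widehat{v_{i_k}}\cdots v_{i_l}$ with $\lambda_k$ an explicit monomial in the $q_{st}$, then apply $\dd_i^R$ to each surviving term and, separately, compute $\p_j^R \dd_i^R$ the same way, and match term by term after pulling out $q_{ij}^{-1}$.

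A cleaner route, which I would actually prefer, is to avoid the combinatorics entirely by using the structural commutation already available: Lemma \ref{Lemma:comm}(2) gives $\p_a^L\p_b^R = \p_b^R\p_a^L$, and one can similarly analyze how $\p^R$ interacts with the bar involution by noting that $\dd_i^R = \om\circ\p_i^R\circ\om$ where $\om$ denotes the bar automorphism restricted to the scalar part (i.e. $\om(q^{1/2})=q^{-1/2}$, $\om(v_i)=v_i$). Then $\p_j^R\dd_i^R = \p_j^R\om\p_i^R\om$, and the task reduces to commuting $\p_j^R$ past $\om$: one shows $\p_j^R\circ\om = \om\circ(\text{some rescaling of }\p_j^R)$, where the rescaling is precisely by the bar-non-invariant part of the braiding scalars, and tracking this through yields the stated $q_{ij}^{-1}$. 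The hard part will be pinning down the rescaling factor correctly — i.e. verifying that commuting a single right differential past $\om$ introduces exactly the discrepancy $q_{ij}/\overline{q_{ij}}$-type factor that, when combined across the two operators, collapses to $q_{ij}^{-1}$ and nothing more. This is the one place where a sign or inversion error is easy to make, so I would double-check it on the degree-$2$ case $v_iv_j$ (where $\p_j^R(v_iv_j) = q_{??}v_i$ and the computation is transparent) before asserting the general formula.
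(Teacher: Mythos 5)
Your first route is essentially the paper's own argument: the paper proves the identity on monomials by induction on the length, using the Leibniz-type recursions
$\p_j^R(v_{i_1}\cdots v_{i_n})=q_{j,i_n}\,\p_j^R(v_{i_1}\cdots v_{i_{n-1}})v_{i_n}+v_{i_1}\cdots v_{i_{n-1}}\p_j^R(v_{i_n})$
and
${\dd}_i^R(v_{i_1}\cdots v_{i_n})=q_{i,i_n}^{-1}\,{\dd}_i^R(v_{i_1}\cdots v_{i_{n-1}})v_{i_n}+v_{i_1}\cdots v_{i_{n-1}}{\dd}_i^R(v_{i_n})$,
which is exactly your ``bookkeeping of extraction orders'' packaged recursively; that plan would go through. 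However, you never invoke the standing hypothesis of this subsection that the braiding matrix is symmetric, $q_{ij}=q_{ji}$, and the lemma is \emph{false} without it. Already in degree $2$ with $i\neq j$ one computes $\p_j^R{\dd}_i^R(v_jv_i)=1$ while $q_{ij}^{-1}{\dd}_i^R\p_j^R(v_jv_i)=q_{ij}^{-1}q_{ji}$: the ``relative factor from a single crossing'' is $q_{ij}^{-1}q_{ji}$, not $1$, on the terms where the extracted $v_i$ sits to the left of the extracted $v_j$, and only symmetry collapses it. In the paper's induction this is precisely the surviving coefficient $1-q_{ij}^{-1}q_{j,i_n}$ in the case $i=i_n\neq j$. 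Since your term-by-term matching cannot close without this, the omission is a genuine gap in the sketch, not merely presentational.

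The ``cleaner route'' you say you would prefer does not work as described. Writing ${\dd}_i^R=\om\circ\p_i^R\circ\om$ (with $\om$ the bar automorphism) is fine, but $\p_j^R\circ\om=\om\circ{\dd}_j^R$, and ${\dd}_j^R$ is not a rescaling of $\p_j^R$ by any fixed scalar: its coefficients are the bar-conjugates of word-dependent monomials in the $q_{st}$, so there is no single factor by which to commute $\p_j^R$ past $\om$. That manoeuvre only renames the problem --- it converts the claim into the equivalent statement ${\dd}_j^R\p_i^R=q_{ij}\,\p_i^R{\dd}_j^R$, i.e.\ the same lemma with $i$ and $j$ exchanged --- and you would still have to carry out the degree-by-degree comparison of your first route. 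Stick with the direct computation, and make the use of $q_{ij}=q_{ji}$ explicit where the two extraction orders are compared.
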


\begin{proof}[Proof]
It is easy to show the following formulas: for any $i_1,\cdots,i_n\in I$,
$$\p_j^R(v_{i_1}\cdots v_{i_n})=q_{j,i_n}\p_j^R(v_{i_1}\cdots v_{i_{n-1}})v_{i_n}+v_{i_1}\cdots v_{i_{n-1}}\p_j^R(v_{i_n}),$$
$${\dd}_i^R(v_{i_1}\cdots v_{i_n})=q_{i,i_n}^{-1}{\dd}_i^R(v_{i_1}\cdots v_{i_{n-1}})v_{i_n}+v_{i_1}\cdots v_{i_{n-1}}{\dd}_i^R(v_{i_n}).$$
Since both $\p_i^R$ and ${\dd}_i^R$ are $\mc(q)$-linear, it suffices to verify the lemma for monomials. We use induction on the degree $n$ of the monomial. The case $n=1$ is trivial. Taking a monomial $v_{i_1}\cdots v_{i_n}$ and applying formulas above gives:\begin{eqnarray*}
& &\p_j^R{\dd}_i^R(v_{i_1}\cdots v_{i_n})\\
&=& q_{j,i_n}q_{i,i_n}^{-1}\p_j^R{\dd}_i^R(v_{i_1}\cdots v_{i_{n-1}})v_{i_n}+q_{i,i_n}^{-1}{\dd}_i^R(v_{i_1}\cdots v_{i_{n-1}})\p_j^R(v_{i_n})+\p_j^R(v_{i_1}\cdots v_{i_{n-1}}){\dd}_i^R(v_{i_n}),
\end{eqnarray*}
\begin{eqnarray*}
& &{\dd}_i^R\p_j^R(v_{i_1}\cdots v_{i_n})\\
&=& q_{j,i_n}q_{i,i_n}^{-1}{\dd}_i^R\p_j^R(v_{i_1}\cdots v_{i_{n-1}})v_{i_n}+q_{j,i_n}\p_j^R(v_{i_1}\cdots v_{i_{n-1}}){\dd}_i^R(v_{i_n})+{\dd}_i^R(v_{i_1}\cdots v_{i_{n-1}})\p_j^R(v_{i_n}).
\end{eqnarray*}
Then induction hypothesis can be applied to give
\begin{eqnarray*}
& &(\p_j^R{\dd}_i^R-q_{ij}^{-1}{\dd}_i^R\p_j^R)(v_{i_1}\cdots v_{i_n})\\
&=& (1-q_{ij}^{-1}q_{j,i_n}) \p_j^R(v_{i_1}\cdots v_{i_{n-1}}){\dd}_i^R(v_{i_n})+(q_{i,i_n}^{-1}-q_{ij}^{-1}){\dd}_i^R(v_{i_1}\cdots v_{i_{n-1}})\p_j^R(v_{i_n}).
\end{eqnarray*}

Notice that if $i\neq i_n$ and $j\neq i_n$, the right hand side is zero. It remains to tackle the following three cases:
\begin{enumerate}
\item if $i=j=i_n$, then the two coefficients on the right hand side are zero;
\item if $i\neq j$, $i=i_n$, then the coefficient of the second term in the right hand side vanishes and the first term is zero;
\item if $i\neq j$, $j=i_n$ then the coefficient of the first term in the right hand side vanishes and the second term is zero.
\end{enumerate}
\par
This terminates the proof.
\end{proof}

\begin{proposition}
The restriction of the bar involution induces a linear automorphism of $\mathfrak{I}(V)$.
\end{proposition}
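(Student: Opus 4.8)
The plan is to use Theorem \ref{Relr}, which identifies $\mathfrak{I}(V)$ with the Hopf ideal generated by the right pre-relations ${\Rel}_r$, and to show that the bar involution sends this generating set into $\mathfrak{I}(V)$. Since the bar map is a $\mc$-linear automorphism of $T(V)$ fixing each $v_i$ and acting on scalars by $q^{1/2}\mapsto q^{-1/2}$, it suffices to prove that $\overline{v}\in\mathfrak{I}(V)$ for every right pre-relation $v$, and moreover that the bar map restricted to $\mathfrak{I}(V)$ is surjective — but surjectivity is automatic once we know $\overline{\mathfrak{I}(V)}\subset\mathfrak{I}(V)$, because the bar map is an involution.

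First I would translate the defining conditions of a right pre-relation into the language of the operators $\dd_i^R$. By Lemma \ref{Lem5.2}, $v\in T^n(V)$ lies in $\mathfrak{I}(V)$-degree-$n$ kernel precisely when $\p_i^R(v)=0$ for all $i\in I$; equivalently, iterating, $v\in\ker S_n$ iff all sufficiently long compositions of right differentials kill $v$ (this is what powers the inductive proof of Theorem \ref{Thm:1}). So the key point is the compatibility of $\dd_i^R$ with $\p_j^R$ recorded in Lemma \ref{Lem:comm}: $\p_j^R\dd_i^R=q_{ij}^{-1}\dd_i^R\p_j^R$. From this one gets, for any word $\underline{i}=(i_1,\dots,i_l)$, a relation of the form $\p_{i_1}^R\cdots\p_{i_l}^R\circ(\text{bar})=(\text{scalar})\cdot(\text{bar})\circ\p_{i_1}^R\cdots\p_{i_l}^R$ on monomials — more precisely $\p_{i_1}^R\cdots\p_{i_l}^R(\overline{x})$ equals a nonzero scalar times $\overline{\dd_{i_1}^R\cdots\dd_{i_l}^R(x)}$ is not quite right; rather one rewrites $\dd_i^R=\overline{\;\cdot\;}\circ\p_i^R\circ\overline{\;\cdot\;}$ by definition, so $\overline{\p_i^R(x)}=\dd_i^R(\overline{x})$, and then Lemma \ref{Lem:comm} lets one commute the $\dd$'s back into $\p$'s up to an invertible scalar $\prod q_{\cdot\cdot}^{\pm1}$. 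Hence $\p_{i_1}^R\cdots\p_{i_l}^R(\overline{x})=\lambda\,\overline{\p_{j_1}^R\cdots\p_{j_l}^R(x)}$ for a permutation $(j_\bullet)$ of $(i_\bullet)$ and $\lambda\in K^\times$. Since the $\p_i^R$ characterize membership in $\mathfrak{I}(V)$ via Proposition \ref{schauenburg}/Lemma \ref{Lem5.2} (and the bar map is a bijection on each $V^{\ts n}$), this shows $\overline{x}\in\mathfrak{I}(V)$ whenever $x\in\mathfrak{I}(V)$: indeed $x\in\ker S_n$ forces all long enough $\p$-compositions to vanish on $x$, hence on $\overline{x}$, hence $\overline{x}\in\ker S_n$ by the converse direction of the same characterization.

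Concretely, the cleanest route is: (i) show $\overline{\p_i^R(x)}=\dd_i^R(\overline{x})$ directly from the definition of $\dd_i^R$; (ii) use Lemma \ref{Lem:comm} inductively to prove that $x\in\ker(\p_i^R:T^n\to T^{n-1}\ \forall i)$ implies $\overline{x}\in\ker(\p_i^R\ \forall i)$ — here the scalar $q_{ij}^{-1}$ is harmless because we only need the \emph{vanishing} of these operators, not exact intertwining; (iii) combine with the decomposition $S_n=T_2\cdots T_n$ and Lemma \ref{Lem5.2} (exactly as in the proof of Theorem \ref{Thm:1}, inducting on degree) to conclude $x\in\ker S_n\Rightarrow\overline{x}\in\ker S_n$, i.e. the bar map preserves each graded piece $\ker S_n$ of $\mathfrak{I}(V)$; (iv) since the bar map is an involution it is then an automorphism of $\mathfrak{I}(V)=\bigoplus_{n\geq2}\ker S_n$. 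I expect step (iii) — propagating the one-step statement ``bar preserves $\bigcap_i\ker\p_i^R$'' up the degree filtration to ``bar preserves $\ker S_n$'' — to be the only place requiring care, and it should go through verbatim along the lines of the inductive argument already used for Theorem \ref{Thm:1}, using that $T_m x\in\ker S_{m-1}$ translates via \eqref{**} into $\p_i^R(x)\in\ker S_{m-1}$ for all $i$. The symmetry hypothesis $q_{ij}=q_{ji}$ is not even needed for this proposition (it is used to \emph{define} the bar involution at all), so no extra input beyond Lemma \ref{Lem:comm} and the earlier structural results is required.
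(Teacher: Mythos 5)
Your overall strategy (reduce everything to the right differential operators, use the identity relating $\p_i^R$, the bar map and $\dd_i^R$, and induct on degree) is the same as the paper's, but your step (ii) contains a genuine error that breaks the argument. You claim that $x\in\bigcap_i\ker\p_i^R$ implies $\overline{x}\in\bigcap_i\ker\p_i^R$, and more generally that $\p_{i_1}^R\cdots\p_{i_l}^R(\overline{x})$ equals an invertible scalar times $\overline{\p_{j_1}^R\cdots\p_{j_l}^R(x)}$ for a rearrangement of the indices. Neither is true. What the definitions actually give is $\p_i^R(\overline{x})=\overline{\dd_i^R(x)}$, and iterating, $\p_{i_1}^R\cdots\p_{i_l}^R(\overline{x})=\overline{\dd_{i_1}^R\cdots\dd_{i_l}^R(x)}$: a pure product of $\dd$'s, which Lemma \ref{Lem:comm} --- a \emph{mixed} commutation relation between one $\p$ and one $\dd$ --- cannot convert into a pure product of $\p$'s up to scalar. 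Since $\dd_i^R$ and $\p_i^R$ genuinely differ (on a monomial, $\dd_i^R$ returns the bar-conjugates of the coefficients produced by $\p_i^R$), $\p_i^R(x)=0$ does not force $\dd_i^R(x)=0$, i.e.\ does not force $\p_i^R(\overline{x})=0$. The paper's later proposition $\Delta_nP_nv=(-1)^{n-1}\overline{P_n\overline{v}}$, together with the fact that $\Delta_n$ exchanges right and left pre-relations, shows that the bar image of a right constant is typically a \emph{left} constant rather than a right one; by Remark \ref{leftright} these spaces differ in general, so the claim of step (ii) fails.

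The correct intermediate statement --- the one the paper isolates as a lemma --- is weaker: $\dd_i^R(\mathfrak{I}(V))\subset\mathfrak{I}(V)$. This requires its own induction on degree, using Lemma \ref{Lem:comm} together with Proposition \ref{Cor:diff}, whose criterion is \emph{membership} ($\p_j^R(x)\in\ker S_{m-1}$), not vanishing; only there is the scalar $q_{ij}^{-1}$ genuinely harmless, because an ideal absorbs scalars. With that lemma in hand, the main induction runs essentially as in your step (iii): $\p_i^R(\overline{v})=\overline{\dd_i^R(v)}$ with $\dd_i^R(v)\in\mathfrak{I}(V)$ of lower degree, so its bar image lies in $\mathfrak{I}(V)$ by the induction hypothesis, and Proposition \ref{Cor:diff} concludes. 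So the skeleton of your proof is right, but the key middle step is unjustified as written and the missing lemma carries the real content. Your opening reduction via Theorem \ref{Relr} is never used afterwards and can be dropped.
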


\begin{proof}[Proof]
We let $\mathfrak{I}(V)_n$ denote the set of degree $n$ elements in $\mathfrak{I}(V)$. Since $-$ is an involution, it suffices to show $\overline{\mathfrak{I}(V)}\subset \mathfrak{I}(V)$. 
\begin{lemma}
For any $i\in I$, ${\dd}_i^R(\mathfrak{I}(V))\subset \mathfrak{I}(V)$.
\end{lemma}
\begin{proof}
The proof is executed by induction on the degree of elements in $\mathfrak{I}(V)$.
\par
The case $n=2$ is clear since by Proposition \ref{Prop:deg2}, $\mathfrak{I}(V)_2$ is generated as a vector space by $v_iv_j-q_{ij}v_jv_i$ for some $q_{ij}$ satisfying $q_{ij}q_{ji}=1$; the hypothesis on the braiding matrix forces $q_{ij}=\pm 1$, hence
$${\dd}_k^R(v_iv_j-q_{ij}v_jv_i)=\overline{\p_k^R(v_iv_j-q_{ij}v_jv_i)}$$
is zero since $\p_k^R$ annihilates $\mathfrak{I}(V)_2$.
\par
Take $v\in\mathfrak{I}(V)_n$, by Proposition \ref{Cor:diff}, it suffices to show that for any $j\in I$, $\p_j^R{\dd}_i^R(v)\in \mathfrak{I}(V)$. Applying Lemma \ref{Lem:comm} gives 
$$\p_j^R{\dd}_i^R(v)=q_{ij}^{-1}{\dd}_i^R\p_j^R(v),$$
by Proposition \ref{Cor:diff} again, $\p_j^R(v)\in \mathfrak{I}(V)$ with a lower degree, hence ${\dd}_i^R\p_j^R(v)\in \mathfrak{I}(V)$ by induction hypothesis and therefore $\p_j^R{\dd}_i^R(v)\in\mathfrak{I}(V)$.
\end{proof}

Return to the proof of the proposition: for $v\in\mathfrak{I}(V)_n$, we show that
$$\overline{\p_i^R(\overline{v})}={\dd}_i^R(v).$$
Indeed, we write $v=\sum \alpha_{i_1,\cdots,i_n}v_{i_1}\cdots v_{i_n}$ for some $\alpha_{i_1,\cdots,i_n}\in\mc(q^{\frac{1}{2}})$ where the sum is over $i_1,\cdots,i_n\in I$,
$$\overline{\p_i^R(\overline{v})}=\overline{\sum \overline{\alpha_{i_1,\cdots,i_n}}\p_i^R(v_{i_1}\cdots v_{i_n})}=\sum \alpha_{i_1,\cdots,i_n}\overline{\p_i^R(v_{i_1}\cdots v_{i_n})}={\dd}_i^R(v).$$

The proof is given by induction on the degree of $v\in \mathfrak{I}(V)_n$ to show that for any $i\in I$, $\p_i^R(\overline{v})\in\mathfrak{I}(V)$, then apply Proposition \ref{Cor:diff}.
\par
The case $n=2$ has shown between the lines of the above lemma. For $n\geq 3$, to show that $\p_i^R(\overline{v})\in\mathfrak{I}(V)$, it suffices to verify that $\overline{\p_i^R(\overline{v})}\in\mathfrak{I}(V)$ by the induction hypothesis and the fact that the bar map is an involution. But we have shown $\overline{\p_i^R(\overline{v})}={\dd}_i^R(v)$, which is in $\mathfrak{I}(V)$ by the above lemma. This finishes the proof.
\end{proof}

According to this proposition, the bar involution may pass the quotient to give a $\mc$-linear automorphism of the Nichols algebra $\mathfrak{B}(V)$.
\par
The relation between the bar involution and the action of the Garside element on the image of the Dynkin operator $P_n$ is explained in the following proposition.

\begin{proposition}
For any $v\in T^n(V)$ satisfying $\theta_nv=v$, $\Delta_nP_nv=(-1)^{n-1}\overline{P_n\overline{v}}$.
\end{proposition}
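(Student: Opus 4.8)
The plan is to reduce the identity $\Delta_n P_n v = (-1)^{n-1}\overline{P_n\overline{v}}$ to a computation in the group algebra $\mc[\fb_n]$, using the symmetry of the braiding matrix and the relation $\sigma_i\Delta_n=\Delta_n\sigma_{n-i}$ from Proposition \ref{braididentity}(2). First I would observe that, because the braiding matrix is symmetric, conjugation by a single braiding generator relates $\sigma$ to $\sigma^{-1}$ up to the bar involution: more precisely, on each weight space $\mc[X_{\underline i}]$ the operator $\sigma$ acts as a scalar multiple of a permutation, and bar-conjugating that scalar inverts it. So I would first establish a clean ``bar-conjugation'' rule, something of the form $\overline{\sigma_i x} = \sigma_i^{-1}\overline{x}$ when $x$ lies in an appropriate tensor slot (this is where symmetry $q_{ij}=q_{ji}$ is essential, and it is exactly the mechanism behind Lemma \ref{Lem:comm}). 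Iterating gives $\overline{w\,\overline{v}} = \eta(w)\,v$ for $w\in\mc[\fb_n]$, where $\eta$ is the anti-automorphism (or automorphism, to be checked) of $\mc[\fb_n]$ sending each $\sigma_i\mapsto\sigma_i^{-1}$.

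Next I would rewrite the right-hand side: $\overline{P_n\overline v} = \eta(P_n)\,v$. Recall $P_n=(1-\sigma_{n-1}\cdots\sigma_1)\cdots(1-\sigma_{n-1})$, so $\eta(P_n)=(1-\sigma_1^{-1}\cdots\sigma_{n-1}^{-1})\cdots(1-\sigma_{n-1}^{-1})$ with the factors in the appropriate order. The goal then becomes the purely braid-theoretic identity
$$\Delta_n P_n = (-1)^{n-1}\eta(P_n)$$
as operators on the $\theta_n$-invariant subspace (the hypothesis $\theta_n v = v$, i.e. $\Delta_n^2 v = v$, lets me freely insert $\Delta_n^{-1}=\Delta_n$ acting on $v$, and also freely replace $\sigma_i^{-1}$ appearing inside $\eta(P_n)$ by $\Delta_n\sigma_i\Delta_n^{-1}$-type expressions when convenient). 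I would prove this by pushing $\Delta_n$ through the product $P_n$ factor by factor using $\sigma_i\Delta_n=\Delta_n\sigma_{n-i}$: each factor $(1-\sigma_{n-1}\sigma_{n-2}\cdots\sigma_k)$ gets converted, after moving $\Delta_n$ past it, into $(1-\sigma_1\sigma_2\cdots\sigma_{n-k})\Delta_n$ or its inverse-twisted analogue; the key point is that $\Delta_n(\sigma_{n-1}\cdots\sigma_k) = (\sigma_1\cdots\sigma_{n-k})^{-1}\Delta_n$ or a variant thereof, which on a $\theta_n$-fixed vector is what produces both the reversal of factor order matching $\eta(P_n)$ and the sign. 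Counting: there are $n-1$ factors, and I expect each contributes a $(-1)$ from rewriting $(1-x)$ as $-x(1-x^{-1})$ when we need to flip $x$ to $x^{-1}$, giving the global $(-1)^{n-1}$.

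The main obstacle is bookkeeping the factor order and the exact twists: $\Delta_n$ is conjugation by the longest element, so it reverses the natural order of the ``staircase'' factors in $P_n$, and I must check that this reversal, combined with the $\sigma_i\mapsto\sigma_i^{-1}$ coming from the bar involution, lands exactly on $\eta(P_n)$ rather than on some reordered or wrongly-twisted variant. I would handle this by first proving the auxiliary lemma $\Delta_n T_n = U_n\Delta_n$, $\Delta_n P_n = Q_n\Delta_n$ (already stated in the Balance subsection) and the parallel statement $\Delta_n P_n \Delta_n^{-1} = $ (the reversed product), then matching $Q_n$ against $\eta(P_n)$ up to sign using that on a $\theta_n$-fixed vector $\sigma_i$ and its ``mirror'' conjugate agree. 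A safe fallback, if the direct manipulation gets unwieldy, is induction on $n$: peel off the innermost factor $(1-\sigma_{n-1})$, use $\iota_{n-1}^n$ to embed the degree $n-1$ case, track how $\Delta_n$ relates to $\iota_{n-1}^n(\Delta_{n-1})$ times the extra strand's contribution, and propagate the sign. Either way the arithmetic is routine once the correct twisted/reversed identity in $\mc[\fb_n]$ is pinned down; that pinning-down is the real content.
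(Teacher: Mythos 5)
Your opening reduction is sound and is a genuinely nicer framing than the paper's: since the braiding matrix is symmetric, $\overline{\sigma_i x}=\sigma_i^{-1}\overline{x}$ for all $x$, so $\overline{w\,\overline{v}}=\eta(w)v$ where $\eta$ is the \emph{automorphism} (not anti-automorphism) of $\mc[\fb_n]$ with $\eta(\sigma_i)=\sigma_i^{-1}$, and the proposition becomes the statement that $\Delta_nP_n$ and $(-1)^{n-1}\eta(P_n)$ agree on the $\theta_n$-fixed subspace. The problem is that this braid-algebra identity is the entire content of the proposition, and your sketch of it does not go through as written. Two concrete obstructions: first, $\eta(\sigma_{n-1}\cdots\sigma_k)=\sigma_{n-1}^{-1}\cdots\sigma_k^{-1}$ is \emph{not} $(\sigma_{n-1}\cdots\sigma_k)^{-1}=\sigma_k^{-1}\cdots\sigma_{n-1}^{-1}$, so the trick $(1-x)=-x(1-x^{-1})$ does not produce the factors of $\eta(P_n)$; your displayed formula for $\eta(P_n)$ already conflates the two. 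Second, even granting the sign extraction, you are left with $n-1$ staircase elements interleaved between non-commuting factors, and reassembling them into $\Delta_n$ (or $\theta_n$) while simultaneously converting the remaining factors into those of $\eta(P_n)$ requires commutation identities you have not supplied; pushing $\Delta_n$ through $P_n$ only yields $\Delta_nP_n=Q_n\Delta_n$, which is the lemma already recorded in the paper and still leaves the comparison of $Q_n\Delta_n$ with $(-1)^{n-1}\eta(P_n)$ entirely open.

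The paper sidesteps the product form altogether: it expands $P_n=\sum_{s}(-1)^s\sum_{j_1<\cdots<j_s}E_{j_1,\ldots,j_s}$ with $E_{j_1,\ldots,j_s}=(\sigma_{n-1}\cdots\sigma_{j_1})\cdots(\sigma_{n-1}\cdots\sigma_{j_s})$, and proves the term-by-term complementation identity $\Delta_nE_{j_1,\ldots,j_s}v=\overline{E_{j_1',\ldots,j_t'}v}$ for a monomial $v$, where $\{j_1,\ldots,j_s\}\sqcup\{j_1',\ldots,j_t'\}=\{1,\ldots,n-1\}$. This is a direct scalar computation: both sides are the same reordered monomial times explicit products of $q_{ij}$'s, and the hypothesis $\theta_nv=v$ together with $q_{ij}=q_{ji}$ forces the two scalars to be bar-conjugate. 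Summing over complementary subsets then produces the factor $(-1)^{n-1}$ from $(-1)^s=(-1)^{n-1}(-1)^{n-1-s}$, and linearity gives the general case. If you want to keep your $\eta$-reformulation, the statement you actually need to prove is $\Delta_nE_{j_1,\ldots,j_s}=\theta_n\,\eta(E_{j_1',\ldots,j_t'})$ (an identity between single group elements, verifiable for small $n$), i.e.\ the expanded, subset-indexed form rather than the product form; as it stands the key identity is asserted but not established.
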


\begin{proof}[Proof]
We tackle with the case when $v=v_{i_1}\cdots v_{i_n}$ is a monomial. In this case, the identity to be proved is $\Delta_nP_nv=(-1)^{n-1}\overline{P_nv}$.
\par
For $1\leq j_1<\cdots<j_s\leq n-1$, we denote
$$E_{j_1,\cdots,j_s}=(\s_{n-1}\cdots \s_{j_1})\cdots (\s_{n-1}\cdots \s_{j_s}),$$
then the Dynkin operator $P_n$ can be written as:
$$P_n=\sum_{s=0}^{n-1}(-1)^s \sum_{1\leq j_1<\cdots <j_s\leq n-1}E_{j_1,\cdots ,j_s}.$$

\begin{lemma}
Let $1\leq j_1<\cdots <j_s\leq n-1$ and $1\leq j_1'<\cdots <j_t'\leq n-1$ such that $\{j_1,\cdots,j_s\}$ and $\{j_1'\cdots,j_t'\}$ form a partition of the set $\{1,\cdots,n-1\}$. Then for any $v=v_{i_1}\cdots v_{i_n}$, 
$$\Delta_nE_{j_1,\cdots,j_s}v=\overline{E_{j_1'\cdots,j_t'}v}.$$ 
\end{lemma}

\begin{proof}
To simplify notations, we define 
$$Q_{i_1,\cdots,i_n}^{j_1,\cdots,j_s}=q_{j_s,j_s+1}\cdots q_{j_s,i_n}q_{j_{s-1},j_{s-1}+1}\cdots q_{j_{s-1},i_n}\cdots q_{j_1,j_1+1}\cdots q_{j_1,i_n}.$$
Then the condition $\theta_nv=v$ and the fact that the braiding matrix is symmetric imply
$$Q_{i_1,\cdots,i_n}^{j_1,\cdots,j_s}=\overline{Q_{i_1,\cdots,i_n}^{j_1',\cdots,j_t'}}.
$$
With this notation, 
$$\Delta_n E_{j_1,\cdots,j_s}(v_{i_1}\cdots v_{i_n})=Q_{i_1,\cdots,i_n}^{j_1,\cdots,j_s}v_{j_1}\cdots v_{j_s}v_{j_t'}\cdots v_{j_1'},$$
$$E_{j_1',\cdots,j_t'}(v_{i_1}\cdots v_{i_n})=Q_{i_1,\cdots,i_n}^{j_1',\cdots,j_t'}v_{j_1}\cdots v_{j_s}v_{j_t'}\cdots v_{j_1'}.$$
The lemma can be proved by combining the above formulas.
\end{proof}
We compute the left hand side when $v=v_{i_1}\cdots v_{i_n}$,
\begin{eqnarray*}
\Delta_nP_nv &=& \sum_{s=0}^{n-1}(-1)^s\sum_{1\leq j_1<\cdots<j_s\leq n-1}\Delta_nE_{j_1,\cdots,j_s}v\\
&=& \sum_{s=0}^{n-1}(-1)^s\sum_{1\leq j_1'<\cdots<j_{n-1-s}'\leq n-1}\overline{E_{j_1',\cdots,j_{n-1-s}'}v}\\
&=& (-1)^{n-1}\sum_{t=0}^{n-1}(-1)^t \sum_{1\leq j_1'<\cdots<j_t'\leq n-1}\overline{E_{j_1',\cdots,j_t'}v}=(-1)^{n-1}\overline{P_nv}.
\end{eqnarray*}
It remains to tackle the general case: suppose that $v=\sum a_i v_i$ where $v_i$ are monomials. Applying the above formula gives:
\begin{eqnarray*}
\Delta_nP_nv &=& \sum a_i\Delta_nP_nv_i\\
&=& (-1)^{n-1}\sum a_i\overline{P_n v_i}\\
&=& (-1)^{n-1}\sum \overline{\overline{a_i}P_nv_i}=(-1)^{n-1}\overline{P_n\overline{v}}.
\end{eqnarray*}
\end{proof}

\begin{corollary}
If $v\in T(V)$ satisfying $\overline{v}=v$ and $P_nv$ is a right pre-relation. Then $\overline{P_nv}$ is a left pre-relation.
\end{corollary}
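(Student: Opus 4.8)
The plan is to combine the previous proposition with the two isomorphisms established earlier (the Garside element swapping right and left pre-relations, and the bar involution preserving $\mathfrak{I}(V)$). Suppose $v\in T^n(V)$ satisfies $\overline{v}=v$ and $P_nv$ is a right pre-relation; in particular $T_nP_nv=0$, and by Corollary \ref{Cor:theta} we have $\theta_n(P_nv)=P_nv$, hence also $\theta_nv=v$ after applying the argument of Section \ref{tensorrep} (the scalar by which $\theta_n$ acts on the relevant $\mc[X_{\underline{i}}]$ is invertible). This puts us exactly in the hypotheses of the preceding proposition, which gives $\Delta_nP_nv=(-1)^{n-1}\overline{P_n\overline{v}}$. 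Since $\overline{v}=v$, the right-hand side is $(-1)^{n-1}\overline{P_nv}$.

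Next I would verify that $\overline{P_nv}$ is a left pre-relation, i.e. that it satisfies the two defining conditions. For the first condition, $U_n\overline{P_nv}=(-1)^{n-1}U_n\Delta_nP_nv=(-1)^{n-1}\Delta_nT_nP_nv=0$ by the lemma $U_n\Delta_n=\Delta_nT_n$ and the fact that $T_nP_nv=0$. Moreover $\overline{P_nv}=(-1)^{n-1}\Delta_nP_nv=(-1)^{n-1}Q_n\Delta_nv$ by the lemma $\Delta_nP_n=Q_n\Delta_n$, so $\overline{P_nv}$ lies in the image of $Q_n$, as required. For the second (non-degeneracy) condition $\iota_{n-1}^n(U_{n-1}')w'\neq 0$ where $w'=(-1)^{n-1}\Delta_nv$: here one uses that $\Delta_n$ is invertible on the $\theta_n$-eigenspace together with the commutation relations of Proposition \ref{braididentity} transporting $X_{n-2,n}$ (the right "$T_{n-1}'$") to its left analogue; equivalently, one can observe that the whole situation is symmetric under the Garside isomorphism ${\Rel}_r^n\cong{\Rel}_l^n$ of the earlier corollary, which sends $P_nv$ to $\Delta_nP_nv=(-1)^{n-1}\overline{P_nv}$, so $\overline{P_nv}$ is a scalar multiple of a left pre-relation and hence itself a left pre-relation.

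The only genuine point needing care is the compatibility of the non-degeneracy condition under the Garside twist: one must be sure that condition (2) in the definition of a right pre-relation is carried to condition (2) in the definition of a left pre-relation. This is precisely what the proof of the corollary "$\Delta_n$ induces ${\Rel}_r^n\cong{\Rel}_l^n$" already handles (it checks both conditions), so the cleanest route is to simply invoke that corollary rather than re-deriving it: $\overline{P_nv}=(-1)^{n-1}\Delta_n(P_nv)$ and $\Delta_n$ maps right pre-relations to left pre-relations. Thus the corollary follows at once.

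\begin{proof}
Since $P_nv$ is a right pre-relation, Corollary \ref{Cor:theta} gives $\theta_n(P_nv)=P_nv$, and the argument of Section \ref{tensorrep} then yields $\theta_nv=v$. By the preceding proposition, $\Delta_nP_nv=(-1)^{n-1}\overline{P_n\overline{v}}=(-1)^{n-1}\overline{P_nv}$, using $\overline{v}=v$. Hence $\overline{P_nv}=(-1)^{n-1}\Delta_n(P_nv)$ is, up to the sign $(-1)^{n-1}$, the image of the right pre-relation $P_nv$ under the isomorphism $\Delta_n\colon {\Rel}_r^n\to{\Rel}_l^n$ of the corollary above; therefore $\overline{P_nv}$ is a left pre-relation.
\end{proof}
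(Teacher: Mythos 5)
Your proof is correct and follows exactly the route the paper intends (the corollary is stated without proof, immediately after the proposition $\Delta_nP_nv=(-1)^{n-1}\overline{P_n\overline{v}}$ and the corollary that $\Delta_n$ maps ${\Rel}_r^n$ isomorphically onto ${\Rel}_l^n$): you use $\theta_nv=v$ to invoke the proposition, then identify $\overline{P_nv}$ with $(-1)^{n-1}\Delta_n(P_nv)$ and apply the Garside isomorphism. Your closing observation that the cleanest path is to cite that isomorphism rather than re-verify both defining conditions is precisely the right call.
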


For instance, $\overline{v}=v$ holds when $v$ is a monomial.

\section{On the specialization problem}\label{Sec5.7}

Recall that the field $\mc$ is of characteristic $0$.

\subsection{A result on Kac-Moody Lie algebras}

Let $C$ be an arbitrary matrix. The Kac-Moody Lie algebra associated to $C$ is defined by: $\g(C)=\widetilde{\g}(C)/\mathfrak{r}$, where $\widetilde{\g}(C)$ is the Lie algebra with Chevalley generators $e_i,f_i,h_i$ for $i\in I$ and relations with respect to a realization $(\mathfrak{h},\Pi,\Pi^\vee)$ of $C$ and $\mathfrak{r}$ is the unique maximal ideal in $\widetilde{\g}(C)$ intersecting $\mathfrak{h}$ trivially (see \cite{Kac90}, Chapter 1 for details). Moreover, we have the following decomposition as subalgebras
$$\widetilde{\g}(C)=\widetilde{\mathfrak{n}}_-\oplus\mathfrak{h}\oplus\widetilde{\mathfrak{n}}_+.$$ 
It gives $\mathfrak{r}=\mathfrak{r_+}\oplus\mathfrak{r}_-$ as a direct sum of ideals, where $\mathfrak{r}_+=\mathfrak{r}\cap\wt{\mathfrak{n}}_+$ and $\mathfrak{r}_-=\mathfrak{r}\cap\wt{\mathfrak{n}}_-$. We denote the quotients by $\mathfrak{n}_-=\widetilde{\mathfrak{n}}_-/\mathfrak{r}_-$ and $\mathfrak{n}_+=\widetilde{\mathfrak{n}}_+/\mathfrak{r}_+$.
\par
For these Lie algebras, we let $U(\mathfrak{r}_\pm)$, $U(\mathfrak{n}_\pm)$, $U(\widetilde{\mathfrak{n}}_\pm)$, $U(\mathfrak{g})$ and $U(\widetilde{\g})$ denote the corresponding enveloping algebras. The following theorem and proposition should be known by experts in enveloping algebras. We provide their proofs for the absence of a proper reference.

\begin{theorem}\label{Thm:KM}
Let $x\in U(\wt{\mathfrak{n}}_-)$. There exists an equivalence between:
\begin{enumerate}
\item for any $i\in I$, $[e_i,x]\in U(\mathfrak{r}_-)$;
\item $x\in U(\mathfrak{r}_-)$.
\end{enumerate}
\end{theorem}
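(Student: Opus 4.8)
The plan is to use the standard $\mathfrak{sl}_2$-theory inside $\widetilde{\g}(C)$ together with the characterization of $\mathfrak{r}_-$ as the largest ideal of $\widetilde{\g}(C)$ meeting $\mathfrak{h}$ trivially. The direction (2)$\Rightarrow$(1) is immediate: $\mathfrak{r}_-$ is an ideal of $\widetilde{\g}(C)$, hence $U(\mathfrak{r}_-)$ absorbs brackets with $e_i$, so this needs only one line. The real content is (1)$\Rightarrow$(2), and here the natural strategy is to exhibit, from an element $x\in U(\widetilde{\mathfrak{n}}_-)$ satisfying $[e_i,x]\in U(\mathfrak{r}_-)$ for all $i$, a genuine \emph{Lie} ideal of $\widetilde{\g}(C)$ that contains the relevant part of $x$ and meets $\mathfrak{h}$ trivially, forcing it into $\mathfrak{r}_-$.

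First I would reduce to homogeneous elements with respect to the root-space grading of $U(\widetilde{\mathfrak{n}}_-)$ by $Q_- = \sum_i \mz_{\leq 0}\alpha_i$, since both conditions are graded; then argue by induction on the height of the weight. For the inductive step the key observation is the PBW-type decomposition $U(\widetilde{\mathfrak{n}}_-) = \mc \oplus \bigoplus_i U(\widetilde{\mathfrak{n}}_-) f_i$ together with the commutation formula $[e_i, y f_j] = [e_i,y]f_j + \delta_{ij}\, y\, \alpha_i^\vee \pmod{\text{lower order}}$ inside $U(\widetilde{\g}(C))$ (here $\alpha_i^\vee = h_i$). Writing $x = \sum_j x_j f_j$ with $x_j\in U(\widetilde{\mathfrak{n}}_-)$ of strictly smaller weight, the hypothesis $[e_i,x]\in U(\mathfrak{r}_-)$ should, after projecting onto the appropriate PBW component, yield on the one hand $[e_i,x_j]\in U(\mathfrak{r}_-)$ for all $i,j$ — so that the inductive hypothesis gives $x_j\in U(\mathfrak{r}_-)$, whence $x_j f_j \in U(\mathfrak{r}_-)$ since $U(\mathfrak{r}_-)$ is a right ideal in $U(\widetilde{\mathfrak{n}}_-)$ — and on the other hand a constraint matching the "diagonal" terms $x_i h_i$ against something in $U(\mathfrak{r}_-)$; since $U(\mathfrak{r}_-)\subset U(\widetilde{\mathfrak{n}}_-)$ contains no element involving $\mathfrak{h}$, one concludes the diagonal contribution must separately vanish or already lie in $U(\mathfrak{r}_-)$. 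Assembling these gives $x\in U(\mathfrak{r}_-)$.

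An alternative, perhaps cleaner, route is to avoid PBW bookkeeping and instead work at the Lie-algebra level first: show that $\mathfrak{a} := \{y\in\widetilde{\mathfrak{n}}_- : [e_i,y]\in\mathfrak{r}_-\ \forall i\}$ is an ideal of $\widetilde{\g}(C)$ (it is visibly stable under $\mathrm{ad}\,f_i$ and $\mathrm{ad}\,\mathfrak{h}$, and stability under $\mathrm{ad}\,e_i$ follows from the Jacobi identity using that $[e_i,e_j]$-strings act suitably and $\mathfrak{r}_-$ is $\mathrm{ad}\,\widetilde{\mathfrak{n}}_+$-stable), and that it meets $\mathfrak{h}$ trivially, hence $\mathfrak{a}\subseteq\mathfrak{r}_-$; then bootstrap from the Lie algebra to the enveloping algebra by the same weight induction but now invoking $\mathfrak{a}\subseteq\mathfrak{r}_-$ rather than re-proving it inductively. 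This mirrors exactly the pattern of Proposition \ref{Cor:diff} ("invariance under integration") on the Nichols-algebra side, with $e_i$ playing the role of $\p_i^R$ and $U(\mathfrak{r}_-)$ the role of $R^m$, which is presumably why the theorem is stated in this form.

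\textbf{Main obstacle.} The delicate point is controlling the "diagonal" terms $x_i h_i$ that appear when $e_i$ is bracketed past an $f_i$: one must be sure that the congruence computations modulo lower-order PBW terms are exact and that no cross-terms between different $i$ contaminate the argument, and — crucially — that an element of $U(\widetilde{\mathfrak{n}}_-)\otimes 1$ cannot conspire with $\mathfrak{h}$-terms to land in $U(\mathfrak{r}_-)$. This last fact is really the enveloping-algebra shadow of "$\mathfrak{r}\cap\mathfrak{h}=0$" and will need the PBW theorem for $U(\widetilde{\g}(C))$ relative to the triangular decomposition to make rigorous; getting that separation statement right is where the proof will stand or fall.
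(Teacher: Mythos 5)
Your direction (2)$\Rightarrow$(1) is fine, but both of your routes for (1)$\Rightarrow$(2) contain a step that fails, and the failure is the same in both: you treat membership in $U(\mathfrak{r}_-)$ as an ideal-theoretic condition, when $U(\mathfrak{r}_-)$ is only a subalgebra of $U(\wt{\mathfrak{n}}_-)$ (in particular $1\in U(\mathfrak{r}_-)$). In the first route, the inference ``$x_j\in U(\mathfrak{r}_-)$, whence $x_jf_j\in U(\mathfrak{r}_-)$ since $U(\mathfrak{r}_-)$ is a right ideal'' is false: $1\in U(\mathfrak{r}_-)$ but $f_j\notin U(\mathfrak{r}_-)$. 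Since the base case of your induction is the scalars, which lie in $U(\mathfrak{r}_-)$ vacuously, the invariant ``$x_j\in U(\mathfrak{r}_-)$'' carries no information at the bottom and the scheme cannot distinguish $f_j$ (for which the conclusion must fail) from a genuine element of $\mathfrak{r}_-U(\wt{\mathfrak{n}}_-)$; what you actually need to track is membership in the left ideal generated by $\mathfrak{r}_-$, not in the subalgebra $U(\mathfrak{r}_-)$. In the second route, $\mathfrak{a}$ is not ``visibly'' $\mathrm{ad}\,f_j$-stable: Jacobi gives $[e_i,[f_j,y]]=\delta_{ij}[h_i,y]+[f_j,[e_i,y]]$; the second term lies in $\mathfrak{r}_-$ because $\mathfrak{r}_-$ is an ideal, but the first is a nonzero scalar multiple of $y$ itself, so stability of $\mathfrak{a}$ presupposes $y\in\mathfrak{r}_-$ --- the argument is circular.

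The idea you are missing is the one the paper isolates as Proposition \ref{Prop:constant}: if $x\in U(\mathfrak{n}_-)$ (the quotient algebra) satisfies $[e_i,x]\in\mc$ for all $i$, then $x$ is a constant. This is the enveloping-algebra analogue of Kac's Lemma 1.5 and needs its own induction, via a PBW basis of $U(\mathfrak{n}_-)$ ordered by height and an analysis of the top power of the maximal basis vector occurring in $x$. With this in hand, the paper's reduction is organized differently from yours: by PBW, $U(\wt{\mathfrak{n}}_-)$ is a free $U(\mathfrak{r}_-)$-module, and one inducts on the ``partial degree'' (height in the $U(\mathfrak{r}_-)$ factor, with $U(\mathfrak{n}_-)$ placed in degree $0$). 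Writing the top component as $x_l=\sum r_kn_k$ with the $r_k\in U(\mathfrak{r}_-)$ linearly independent and $n_k\in U(\mathfrak{n}_-)$, the top partial-degree part of $[e_i,x]$ is $\sum r_k[e_i,n_k]$, so the hypothesis forces $[e_i,n_k]\in U(\mathfrak{r}_-)$, hence $[e_i,n_k]\in\mc$; Proposition \ref{Prop:constant} then makes the $n_k$ constants, so $x_l\in U(\mathfrak{r}_-)$, and one passes to $x-x_l$. Your instinct that everything hinges on a separation statement coming from $\mathfrak{r}\cap\mathfrak{h}=0$ is right, but the correct quantity to induct on is the $U(\mathfrak{r}_-)$-module degree rather than the weight of $x$, and the constant-detection proposition is the lemma that has to be proved first.
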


The proof of this theorem occupies the rest of this subsection.

\begin{proposition}\label{Prop:constant}
Let $x\in U(\mathfrak{n}_-)$ such that for any $i\in I$, $[e_i,x]\in\mc$. Then $x\in\mc$ is a constant.
\end{proposition}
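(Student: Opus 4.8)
The plan is to show that an element $x \in U(\mathfrak{n}_-)$ whose adjoint action by every $e_i$ lands in $\mc$ must itself be a scalar. First I would invoke the triangular decomposition of $\g(C) = \mathfrak{n}_- \oplus \mathfrak{h} \oplus \mathfrak{n}_+$ and the resulting PBW basis of $U(\mathfrak{n}_-)$, writing $x = \sum_{k \geq 0} x_k$ with $x_k$ homogeneous of degree $-k$ with respect to the root lattice grading (so $x_0 \in \mc$). Since the bracket with $e_i$ is homogeneous of degree $+\alpha_i$, the hypothesis $[e_i, x] \in \mc$ forces $[e_i, x_k] = 0$ for all $k \geq 2$ and all $i$ (these brackets have nonzero negative degree $-k + \alpha_i$, hence cannot be scalars unless zero), while $[e_i, x_1]$ contributes the scalar part. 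So it suffices to prove that $\bigcap_{i \in I} \ker(\operatorname{ad} e_i) \cap U(\mathfrak{n}_-)_{-k} = 0$ for $k \geq 1$; equivalently, the only elements of $U(\mathfrak{n}_-)$ of strictly negative degree killed by all $\operatorname{ad} e_i$ are zero.

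The key step is then to identify $U(\mathfrak{n}_-)$, as a module over $\mathfrak{n}_+$ (acting by the adjoint action, which here means: $e_i$ acts as the differential operator determined by $[e_i, f_j] = \delta_{ij} h_i$ composed with the $\mathfrak{h}$-action), with the appropriate Verma-type or contragredient structure where lowest-weight theory applies. More concretely, I would use that $U(\mathfrak{n}_-)$ is the graded dual (or, up to the antipode, isomorphic) to the universal enveloping algebra structure on which $\mathfrak{n}_+$ acts locally nilpotently, and that the operators $\operatorname{ad} e_i$ restricted to $U(\mathfrak{n}_-)$ are precisely the Kostant-type ``left derivations'' $\partial_i$ characterized by $\partial_i(f_j) = \delta_{ij}$ and a twisted Leibniz rule. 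The classical fact — which goes back to the proof that $\mathfrak{n}_-$ is freely generated modulo the Serre-type relations, and is exactly the Lie-algebra shadow of the Taylor Lemma used earlier in this paper — is that an element of $U(\mathfrak{n}_-)$ annihilated by all $\partial_i$ and of positive degree must vanish. I would cite Kac's book \cite{Kac90} (the arguments around the Gabber--Kac theorem and Proposition 3.6 there, on the nondegeneracy of the contravariant form) for this nondegeneracy statement, which is the engine of the whole argument.

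The main obstacle I anticipate is making precise the claim that ``$[e_i, -]$ acting on $U(\mathfrak{n}_-)$ is the skew-derivation $\partial_i$'' and that these $\partial_i$ jointly have trivial kernel in positive degree — because $\mathfrak{r}_-$ has already been quotiented out, this is genuinely the statement that the Kac-Moody $\mathfrak{n}_-$ has no ``extra'' primitive-like elements beyond the generators, which is morally the Gabber--Kac theorem. So rather than reprove nondegeneracy, I would reduce to it cleanly: show that if $x \in U(\mathfrak{n}_-)_{-k}$, $k \geq 1$, satisfies $[e_i, x] = 0$ for all $i$, then pairing $x$ against $U(\mathfrak{n}_+)_{k}$ via the contravariant (Shapovalov-type) form gives zero, because that form can be computed by iteratively applying the $\partial_i = \operatorname{ad} e_i$ operators down to degree $0$; nondegeneracy of the form on $U(\mathfrak{n}_\pm)$ for Kac-Moody algebras then yields $x = 0$. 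Combined with the degree analysis of the first paragraph, this gives $x = x_0 \in \mc$, completing the proof of Proposition \ref{Prop:constant}.

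Finally, a remark on robustness: the degree-$1$ part needs a tiny separate check — if $x_1 = \sum_i c_i f_i$ then $[e_j, x_1] = c_j h_j \bmod (\text{lower})$, and for this to be a scalar in $U(\mathfrak{n}_-)$ we in fact need $[e_j, x_1] \in \mc$ to hold after the full computation, which forces the $h_j$-terms to cancel against nothing, hence $c_j = 0$; so actually $x_1 = 0$ as well and $x = x_0$. I would present this as the base case of the induction before running the contravariant-form argument for $k \geq 2$, so the logical structure is: (i) grade $x$; (ii) kill degrees $\geq 2$ by nondegeneracy of the contravariant form / Gabber--Kac; (iii) kill degree $1$ by the explicit $h_i$ computation; (iv) conclude $x \in \mc$.
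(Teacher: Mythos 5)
Your reduction to homogeneous components is correct and, as far as it goes, cleaner than the paper's: grading by the root lattice, $[e_i,x_{-\beta}]$ lies in weight $\alpha_i-\beta$, so for height of $\beta$ at least $2$ it must vanish outright rather than merely be a scalar, and the height-one piece dies because $[e_j,\sum_i c_if_i]=c_jh_j$ lies in $\mc$ only if $c_j=0$. The gap is precisely at the step you yourself call the engine. You reduce the remaining claim (no element of $U(\mathfrak{n}_-)$ of nonzero weight is annihilated by all $[e_i,\cdot\,]$) to the nondegeneracy of the contravariant (Shapovalov-type) pairing $U(\mathfrak{n}_+)_\beta\times U(\mathfrak{n}_-)_{-\beta}\ra U(\mathfrak{h})$, citing \cite{Kac90} and the Gabber--Kac theorem. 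That nondegeneracy --- equivalently the generic irreducibility of Verma modules, i.e.\ the non-vanishing of the Shapovalov determinant --- is established in the literature only for \emph{symmetrizable} generalized Cartan matrices: the known proofs go through the invariant bilinear form or the Casimir operator, and the Gabber--Kac theorem is itself a symmetrizable-only result (the paper explicitly warns that for non-symmetrizable $C$ the Serre relations need not exhaust $\mathfrak{r}$). Since this proposition is stated for an arbitrary matrix $C$ and is used exactly in the non-symmetrizable situation, the citation does not cover the case at hand. Moreover the statement you invoke is strictly \emph{stronger} than the one to be proved: you discard the full hypothesis $[e_i,x]=0$ in favour of the weaker consequence that $x$ pairs to zero with $U(\mathfrak{n}_+)_\beta$, so the reduction runs in the wrong direction even granting the citation.

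The paper's own proof needs much less. Its only external input is the Lie-algebra-level fact (Lemma 1.5 of \cite{Kac90}) that a nonzero $a\in\mathfrak{n}_-$ cannot satisfy $[e_i,a]=0$ for all $i\in I$; this holds for an arbitrary matrix $C$ because such an $a$ would generate an ideal of $\g(C)$ contained in $\mathfrak{n}_-$, hence meeting $\mathfrak{h}$ trivially, contradicting the maximality of $\mathfrak{r}$. It then bootstraps this to $U(\mathfrak{n}_-)$ by an elementary induction on height using a PBW basis ordered by height: isolate the largest basis vector $f_\beta$ occurring in $x$, write $x=\sum_s r_sf_\beta^s$, and read off the top power of $f_\beta$ in $[e_i,x]$. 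To repair your argument, keep your graded reduction as the first step but replace the appeal to nondegeneracy of the contravariant form by such a PBW induction (or any argument whose only input is the arbitrary-$C$ Lemma 1.5).
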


\begin{proof}
Given $x\in U(\mathfrak{n}_-)$ not be a constant, we search for an index $t\in I$ such that $[e_t,x]\notin\mc$.
\par
We start by showing that it suffices to consider those $x$ which are homogeneous with respect to the height gradation. Write $x=x_l+\cdots+x_1+x_0$ such that $x_i$ is of height $i$, since $[e_k,x_s]$ is either of height $s-1$ or of height $0$, to show that $[e_t,x]\notin\mc$, it suffices to consider $[e_t,x_l]$.
\par
We apply induction on the height: if $x$ is of height $1$, then it is in the vector space generated by $f_i$ for $i\in I$. This case is clear.
\par
Suppose that a totally ordered basis $\{f_\gamma\}_{\gamma\in\Gamma}$ of $\mathfrak{n}_-$ is chosen such that elements of less heights are smaller. Let $f_\beta$ denote the maximal basis element among those appearing in monomials of $x$. By PBW theorem,
$x=\sum_{s=0}^l r_sf_\beta^s$
where $f_\beta$ does not appear in monomials of $r_s$. Hence
$$[e_i,x]=\sum_{s=0}^l [e_i,r_s]f_\beta^s+\sum_{s=0}^l r_s[e_i,f_\beta^s]$$
where the term containing $f_\beta^l$ is $[e_i,r_l]$. There are three cases:
\begin{enumerate}
\item $r_l\in\mc$ and all $r_s$ are zero. It suffices to apply Lemma 1.5 in \cite{Kac90}.
\item $r_l\in\mc$ and there exists some maximal $0\leq k\leq l-1$ such that $r_k\neq 0$. In this case, $r_{k}\notin\mc$ since $x$ is homogeneous and the highest power in $f_\beta$ in $[e_i,x]$ is $[e_i,r_{l-1}+lf_\beta]f_\beta^{l-1}$ if $k=l-1$ and otherwise is $l[e_i,f_\beta]f_\beta^{l-1}$. For the former, by induction we can always find some $e_t$ such that $[e_t,r_{l-1}+lf_\beta]\notin\mc$; for the latter it suffices to apply Lemma 1.5 in \cite{Kac90}.
\item $r_l\notin\mc$. Since $r_s$ is homogeneous and of a smaller height, the induction can be applied to give some $e_t$ such that $[e_t,r_l]\neq 0$, therefore $[e_t,x]\neq 0$.
\end{enumerate}
\end{proof}

\begin{proof}[Proof of theorem]
(2) implies (1) is clear since $[e_i,\cdot]$ is a derivation and $\mathfrak{r}_-$ is an ideal in $\widetilde{\mathfrak{n}}_-$.
\par
We suppose that (1) holds and introduce another $\mathbb{N}$-gradation on $U(\wt{\mathfrak{n}}_-)$: by PBW theorem, $U(\wt{\mathfrak{n}}_-)$ is a free $U(\mathfrak{r}_-)$-module (see \cite{Dixmier}, Proposition 2.2.7). We define the partial degree on $U(\wt{\mathfrak{n}}_-)$ by taking the height gradation on $U(\mathfrak{r}_-)$ and letting elements in $U(\mathfrak{n}_-)$ be of degree $0$. Then $x\in U(\wt{\mathfrak{n}}_-)$ is of partial degree $0$ if and only if $x\in U(\mathfrak{n}_-)$.
\par
The proof will be effectuated by induction on the biggest partial degree $l$ among components of $x$.
\par
If $l=0$, $x$ is in $U(\mathfrak{n}_-)$, $[e_i,x]\in U(\mathfrak{r}_-)$ implies that $[e_i,x]\in\mc$. By Proposition \ref{Prop:constant}, $x$ is a constant therefore in $U(\mathfrak{r}_-)$.
\par
In general, let $l$ be the maximal partial degree among components of $x$, we write $x=x_l+x_{l-1}+\cdots+x_1+x_0$ where $x_i\in U(\wt{\mathfrak{n}}_-)$ is of partial degree $i$. We write $x_l=\sum r_kn_k$ for some $r_k\in U(\mathfrak{r}_-)$ and $n_k\in U(\mathfrak{n}_-)$ such that these $r_k$ are linearly independent. In $[e_i,x]$, since the partial degree of $[e_i,r_k]$ is less than that of $r_k$, the component of maximal partial degree is given by $\sum r_k[e_i,n_k]$; as $[e_i,x]\in U(\mathfrak{r}_-)$, it forces $[e_i,n_k]\in U(\mathfrak{r}_-)$ and hence $[e_i,n_k]\in\mc$. This shows that for any $i\in I$ and any $k$, $[e_i,n_k]\in\mc$; by Proposition \ref{Prop:constant}, $n_k$ are constants and $x_l\in U(\mathfrak{r}_-)$. Finally we consider $x-x_l$: it has lower partial degree and satisfies $[e_i,x-x_l]\in U(\mathfrak{r}_-)$. By induction hypothesis, $x-x_l\in U(\mathfrak{r}_-)$, hence $x\in U(\mathfrak{r}_-)$.
\end{proof}

If moreover $C$ is a generalized Cartan matrix, some elements in $\mathfrak{r}$ are discovered in Section 3.3 of \cite{Kac90}: in $\mathfrak{g}(C)$, for $i\neq j$,
\begin{equation}\label{Serre}
(\mathrm{ad}e_i)^{1-c_{ij}}(e_j)=0,\, \, \,  (\mathrm{ad}f_i)^{1-c_{ij}}(f_j)=0.
\end{equation}
If the matrix $C$ is not symmetrizable, the ideal generated by these relations may not exhaust $\mathfrak{r}$.

\subsection{Specialization (I): general definition and counterexample.}

We follow the specialization procedure in \cite{HK02}. Let $C$ be a generalized Cartan matrix, $\mathcal{A}=\mc[q^{\frac{1}{2}},q^{-\frac{1}{2}}]$ and $\mathcal{A}_1$ be the localization of $\mc[q^{\frac{1}{2}}]$ at $(q^{\frac{1}{2}}-1)$.
\par
When the braiding matrix $A$ is of form $(q^{c_{ij}})_{1\leq i,j\leq N}$ for a generalized Cartan matrix $C=(c_{ij})_{1\leq i,j\leq N}$, we will denote the Hopf algebra $T_q(A)$ by $T_q(C)$ and $D_q(A)$ by $D_q(C)$.
\par
We start by defining an $\mathcal{A}_1$-form of $T_q(C)$: let $T_{\mathcal{A}_1}(C)$ be the $\mathcal{A}_1$-subalgebra of $T_q(C)$ generated by 
$$E_i,\ \  F_i,\ \  K_i^{\pm 1}\ \  \mathrm{and}\ \ [K_i;0]=\frac{K_i-K_i^{-1}}{q-q^{-1}}$$
for any $i\in I$. It inherits a Hopf algebra structure from that of $T_q(C)$. We let $T_{\mathcal{A}_1}^{<0}(C)$ (resp. $T_{\mathcal{A}_1}^{>0}(C)$) denote the subalgebra of $T_{\mathcal{A}_1}(C)$ generated by $F_i$ (resp. $E_i$) for $i\in I$.
\par
Since $(q^{\frac{1}{2}}-1)$ is a maximal ideal in $\mathcal{A}_1$, $\mc$ admits an $\mathcal{A}_1$-module structure via $\mathcal{A}_1/(q^{\frac{1}{2}}-1)\cong \mc$, given by evaluating $q^{\frac{1}{2}}$ to $1$. We define $T_1(C)=T_{\mathcal{A}_1}(C)\ts_{\mathcal{A}_1}\mc$, there exists a natural algebra morphism $\widetilde{\s}:T_{\mathcal{A}_1}(C)\ra T_1(C)$, which is called the specialization map.
\par
For $i\in I$, we let $e_i$, $f_i$ and $h_i$ denote the images of $E_i$, $F_i$ and $\frac{K_i-1}{q-1}$ under the map $\wt{\s}$. Then $K_i^{\pm 1}$ are sent to $1$ and $[K_i;0]$ has image $h_i$ under $\wt{\s}$. Relations in $T_{\mathcal{A}_1}(C)$ are specialized to relations in $T_1(C)$;
$$[e_i,f_j]=\wt{\s}([E_i,F_j])=\delta_{ij}\wt{\s}\left([K_i;0]\right)=\delta_{ij}h_i;$$
$$[h_i,e_j]=\wt{\s}([[K_i;0],E_j])=\wt{\s}\left(\frac{(1-q^{-c_{ij}})K_i-(1-q^{c_{ij}})K_i^{-1}}{q-q^{-1}}E_j\right)=c_{ij}e_j;$$
and similarly $[h_i,f_j]=-c_{ij}f_j$ and $[h_i,h_j]=0$.
\par
The following facts hold:
\begin{enumerate}
\item The specialization map $\widetilde{\s}:T_{\mathcal{A}_1}(C)\ra T_1(C)\cong U(\wt{\g}(C))$ is a Hopf algebra morphism. When composed with the projection $U(\wt{\g}(C))\ra U(\g(C))$, it gives a Hopf algebra morphism $\s:T_{\mathcal{A}_1}(C)\ra U(\g(C))$, which is also called the specialization map.
\item The restrictions of $\s$ give the following specialization maps $T_{\mathcal{A}_1}^{<0}(C)\ra U(\mathfrak{n}_-(C))$ and $T_{\mathcal{A}_1}^{>0}(C)\ra U(\mathfrak{n}_+(C))$.
\end{enumerate}

To obtain a true specialization map of the quantum group, the morphism $\s$ should pass through the quotient by defining ideals.

\begin{example}\label{Ex:main}
We consider the following non-symmetrizable generalized Cartan matrix
$$C=\begin{bmatrix} 2 & -2 & -1 \\ -1 & 2 & -1\\ -3 & -1 & 2\end{bmatrix}.$$
In the braided tensor Hopf algebra of diagonal type associated to this matrix, we want to find some particular pre-relations: it is easy to show that $\theta_4(F_3^3F_1)=F_3^3F_1$. Recall that $T_4'=(1-\s_3^2\s_2\s_1)(1-\s_3^2\s_2)(1-\s_3^2)$; since $1-\s_3^2\s_2$ and $1-\s_3^2$ act as non-zero scalars on $F_3^3F_1$ and $1-\s_3^2\s_2\s_1$ acts as $0$ on it,
$$T_4P_4(F_3^3F_1)=T_4'(F_3^3F_1)=0.$$
Moreover, since $\iota_3^4(T_3')=(1-\s_3^2\s_2)(1-\s_3^2)$ acts as a non-zero scalar on $F_3^3F_1$, by definition, $P_4(F_3^3F_1)$ is a right pre-relation of degree $4$ where:
$$P_4(F_3^3F_1)=F_3^3F_1-(q^{-3}+q^{-1}+q)F_3^2F_1F_3+(q^{-4}+q^{-2}+1)F_3F_1F_3^2-q^{-3}F_1F_3^3.$$
If the specialization map to the enveloping algebra of the Kac-Moody Lie algebra associated to $C$ were well-defined, this element would be specialized to 
$$[f_3,[f_3,[f_3,f_1]]]=f_3^3f_1-3f_3^2f_1f_3+3f_3f_1f_3^2-f_1f_3^3$$ 
in $U(\wt{\mathfrak{n}}_-)$. We show that it is not contained in $U(\mathfrak{r}_-)$ so does not give $0$, contradicts the definition of the Kac-Moody Lie algebra.
\par
The successive adjoint actions of $e_3$ give:
$$[e_3,f_3^3f_1-3f_3^2f_1f_3+3f_3f_1f_3^2-f_1f_3^3]=3(f_3^2f_1-2f_3f_1f_3+f_1f_3^2),$$
$$[e_3,f_3^2f_1-2f_3f_1f_3+f_1f_3^2]=4(f_3f_1-f_1f_3),$$
$$[e_3,f_3f_1-f_1f_3]=3f_1.$$
If $[f_3,[f_3,[f_3,f_1]]]$ were in $U(\mathfrak{r}_-)$, so is $f_1$ hence $[e_1,f_1]=h_1$ according to Theorem \ref{Thm:KM}. This is impossible since by definition of the Kac-Moody Lie algebra, $\mathfrak{r}_-\cap \mathfrak{h}=\{0\}$.
\par
As a conclusion, this example shows that the specialization map may not be well-defined if the matrix is not symmetric.
\end{example}

\subsection{Specialization (II): the quantum group case.}
Let $C$ be a generalized Cartan matrix and $\overline{C}$ be the associated averaged matrix. To have a well-defined specialization map, we need to pass to the $q$-enveloping algebra associated to this averaged matrix. We suppose moreover that the matrix $C$ is non-symmetrizable as otherwise there would be no problem.
\par
Recall that $U_q(C):=D_q(\overline{C})$ is the quotient of $T_q(\overline{C})$ by its defining ideals and $U_q^{<0}(C)$ is the subalgebra of $U_q(C)$ generated by $F_i$ for $i\in I$. $U_q(C)$ admits an $\mathcal{A}_1$-form since the defining ideals are given by the kernel of the total symmetrization map $S_n$, which preserves both $T_{\mathcal{A}_1}^{<0}(\overline{C})$ and $T_{\mathcal{A}_1}^{>0}(\overline{C})$. This $\mathcal{A}_1$-form of $U_q(C)$ is generated as an $\mathcal{A}_1$-module by $E_i$, $F_i$, $K_i^{\pm 1}$ and $[K_i;0]$ for $i\in I$, we let $U_{\mathcal{A}_1}(C)$ denote it. Since operators $T_n$, $P_n$, $T_n'$, $U_n$, $Q_n$, $U_n'$, $\Delta_n$ and $\theta_n$ preserve the subalgebra $T_{\mathcal{A}_1}(\overline{C})$ of $T_q(\overline{C})$, left and right pre-relations, left and right constants are well-defined over $\mathcal{A}_1$.

\begin{theorem}\label{Thm:3}
The specialization map $\s:T_{\mathcal{A}_1}(\overline{C})\ra U(\g(\overline{C}))$ passes the quotient to give a surjective map $\s:U_{\mathcal{A}_1}(C)\ra U(\g(\overline{C}))$.
\end{theorem}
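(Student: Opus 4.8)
The plan is to show that the specialization map $\s : T_{\mathcal{A}_1}(\overline{C}) \ra U(\g(\overline{C}))$ kills the defining ideal of $U_q(C) = N_q(\overline{C})$, so that it factors through $U_{\mathcal{A}_1}(C)$; surjectivity is then immediate since the images of the $E_i$, $F_i$, $[K_i;0]$ generate $U(\g(\overline{C}))$ (indeed they hit the Chevalley generators $e_i$, $f_i$, $h_i$, as computed just before the statement). By the triangular decomposition of both $T_{\mathcal{A}_1}(\overline{C})$ and $U(\g(\overline{C}))$, and the fact that the pairing $\vp$ has radical exactly the defining ideal, it suffices to treat the negative part: I want to show that every left (equivalently right) pre-relation, specialized at $q^{1/2}=1$, lands in $U(\mathfrak{r}_-)$ (recall $\mathfrak{n}_- = \widetilde{\mathfrak{n}}_-/\mathfrak{r}_-$, so $U(\mathfrak{n}_-(\overline{C})) = U(\widetilde{\mathfrak{n}}_-)/U(\widetilde{\mathfrak{n}}_-)\mathfrak{r}_-$). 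By Theorem \ref{Relr} the pre-relations generate the whole defining ideal as a Hopf ideal, so this is enough.

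The heart of the argument is an induction on degree using the ``invariance under integration'' criterion. Here is the key point: for a right pre-relation $v = P_n w \in T^n(V)$ of degree $n$, Corollary \ref{Cor:theta} gives $\theta_n v = v$, and by the Taylor Lemma / Proposition \ref{Cor:diff} the element $v$ lies in $R^n = \ker S_n$ iff each $\p_i^R(v) \in R^{n-1}$. On the Lie algebra side I have the exactly parallel statement: Theorem \ref{Thm:KM} says $x \in U(\widetilde{\mathfrak{n}}_-)$ lies in $U(\mathfrak{r}_-)$ iff $[e_i, x] \in U(\mathfrak{r}_-)$ for all $i$. So the strategy is to check that $\s$ intertwines $\p_i^R$ with $\operatorname{ad} e_i$ up to an invertible scalar (this is the specialization of the commutation relation $[E_i, F_j] = \delta_{ij}(K_i - K_i^{-1})/(q - q^{-1})$, which at $q^{1/2}=1$ becomes $[e_i, f_j] = \delta_{ij} h_i$; more precisely $\p_i^R$ specializes to a derivation agreeing with $\operatorname{ad} e_i$ on generators up to the relevant $q$-factors, which all tend to $1$), and then run a joint induction: if $v$ is a pre-relation of degree $n$ and each $\p_i^R(v)$ is a sum of pre-relations of degree $n-1$ (which holds by the last lemma of Section \ref{Sec5.4} together with Theorem \ref{Relr}), then $\s(\p_i^R(v)) \in U(\mathfrak{r}_-)$ by induction, hence $[e_i, \s(v)] \in U(\mathfrak{r}_-)$ for all $i$, hence $\s(v) \in U(\mathfrak{r}_-)$ by Theorem \ref{Thm:KM}. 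The base case $n = 2$ is Example \ref{Ex:deg2}: a degree-$2$ pre-relation is $v_sv_t - \overline q_{st} v_t v_s$ with $\overline q_{st}\overline q_{ts} = 1$, i.e. $\overline c_{st} + \overline c_{ts} = 0$; since $\overline{C}$ is symmetric this forces $\overline c_{st} = 0$, so $\overline q_{st} = 1$, and $v$ specializes to $[f_s, f_t]$ which is in $\mathfrak{r}_-$ precisely because $\overline c_{st} = \overline c_{ts} = 0$ (Serre relation, or: the bracket of two commuting simple root vectors vanishes in $\mathfrak{n}_-$).

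The step I expect to be the main obstacle is making precise and correct the claim that $\s$ carries $\p_i^R$ (a braided/coproduct-defined operator on $T(V)$ over $\mathcal{A}_1$) to $\operatorname{ad} e_i$ on $U(\widetilde{\mathfrak{n}}_-)$ after specialization, and that the various structure constants $q_{j,i_n} = q^{\,\overline c_{j i_n}}$ appearing in the recursion $\p_j^R(v_{i_1}\cdots v_{i_n}) = q_{j,i_n}\p_j^R(v_{i_1}\cdots v_{i_{n-1}})v_{i_n} + v_{i_1}\cdots v_{i_{n-1}}\p_j^R(v_{i_n})$ all specialize to $1$, so that $\wt\s \circ \p_i^R = \operatorname{ad}(e_i) \circ \wt\s$ holds on the nose over $\mathcal{A}_1/(q^{1/2}-1)$. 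One must also be careful that $U_{\mathcal{A}_1}(C)$ is genuinely the quotient of $T_{\mathcal{A}_1}(\overline{C})$ by the $\mathcal{A}_1$-span of pre-relations (using that $S_n$, $P_n$, $T_n'$ preserve the $\mathcal{A}_1$-forms, as noted before the statement) and that no torsion phenomena over $\mathcal{A}_1$ obstruct the factorization. Once the intertwining identity is nailed down, the induction closes cleanly and surjectivity is formal.
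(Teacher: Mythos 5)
Your overall architecture (factor $\s$ through the quotient by showing the generators of the defining ideal specialize into $U(\mathfrak{r}_\pm)$, reduce to the negative part, apply Theorem \ref{Thm:KM}, and induct on degree) matches the paper, but the step you yourself flag as the main obstacle is where the argument genuinely breaks. The claimed intertwining $\wt{\s}\circ\p_i^R=\operatorname{ad}(e_i)\circ\wt{\s}$ (even up to an invertible scalar) is false. The correct relation, which is the first lemma of the paper's proof, is the two-sided divided-difference formula
$$[E_i,w]=\frac{K_i\p_i^L(w)-\p_i^R(w)K_i^{-1}}{q-q^{-1}},$$
valid for $w\in T_{\mathcal{A}_1}^{<0}(\overline{C})$ and requiring the symmetry $q_{ij}=q_{ji}$; because of the division by $q-q^{-1}$, the specialization of $[E_i,\cdot]$ is not the specialization of $\p_i^R$ (already for $w=F_i$: $\p_i^R(F_i)$ is a scalar while $[e_i,f_i]=h_i$). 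Worse, for a right pre-relation $v$ one has $T_nv=0$, hence $\p_i^R(v)=0$ for \emph{all} $i$ by Lemma \ref{Lem5.2}; so your inductive input ``$\s(\p_i^R(v))\in U(\mathfrak{r}_-)$'' is vacuously true, and combined with your intertwining claim it would yield $[e_i,\s(v)]=0$ for every right constant. Example \ref{Ex:main} is precisely a counterexample to that conclusion: three applications of $[e_3,\cdot]$ to the specialized pre-relation produce $3f_1\neq 0$. Note also that your inductive step nowhere uses the symmetry of $\overline{C}$, so if it were correct it would establish well-definedness of the specialization for an arbitrary generalized Cartan matrix, which Example \ref{Ex:main} refutes.

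The repair --- and this is what the paper does --- is to run the induction through the \emph{left} differential applied to \emph{right constants}. For a right constant $w$ the displayed formula collapses to $[E_i,w]=K_i\p_i^L\bigl(\frac{w}{q-q^{-1}}\bigr)$; since $\p_i^L$ and $\p_j^R$ commute (Lemma \ref{Lemma:comm}), $\p_i^L(w)$ is again a right constant of strictly lower degree, so induction together with Theorem \ref{Thm:KM} gives $\wt{\s}(w)\in U(\mathfrak{r}_-)$. Accordingly the paper uses the constants of Theorem \ref{Thm:1} as the generating set rather than the pre-relations of Theorem \ref{Relr}. Your base case and the surjectivity argument are fine; the missing ingredient is the commutation lemma, which is also exactly where the symmetry of $\overline{C}$ enters.
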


The proof of this theorem will occupy the rest of this subsection. We start by the following lemma (see also Lemma 4.15 in \cite{H10}).

\begin{lemma}
For any $w\in T_{\mathcal{A}_1}^{<0}(\overline{C})$ and any $i\in I$,
$$[E_i,w]=\frac{K_i\p_i^L(w)-\p_i^R(w)K_i^{-1}}{q-q^{-1}}=\frac{{\dd}_i^R(w)K_i-\p_i^R(w)K_i^{-1}}{q-q^{-1}}\in T_{\mathcal{A}_1}^{<0}(\overline{C}).$$
\end{lemma}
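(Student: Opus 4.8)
The statement to prove is the commutator formula
$$[E_i,w]=\frac{K_i\p_i^L(w)-\p_i^R(w)K_i^{-1}}{q-q^{-1}}=\frac{{\dd}_i^R(w)K_i-\p_i^R(w)K_i^{-1}}{q-q^{-1}}$$
for $w\in T_{\mathcal{A}_1}^{<0}(\overline{C})$, together with the assertion that the right-hand side lies in $T_{\mathcal{A}_1}^{<0}(\overline{C})$. The plan is to proceed by induction on the degree (length) of a monomial $w=F_{i_1}\cdots F_{i_n}$, since everything is $\mathcal{A}_1$-linear in $w$ and it therefore suffices to treat monomials.

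First I would recall that the pairing $\vp$ together with the quantum double multiplication gives, for the generators, $[E_i,F_j]=\delta_{ij}\frac{K_i-K_i^{-1}}{q-q^{-1}}$, which after rescaling by $-(q-q^{-1})$ matches $\vp(E_i,F_j)=-\delta_{ij}/(q-q^{-1})$; this is the base case $n=1$, using that $\p_i^L(F_j)=\p_i^R(F_j)=\delta_{ij}$ and $K_iF_iK_i^{-1}=q_{ii}^{-1}F_i=q^{-\overline{c}_{ii}}F_i=q^{-2}F_i$ so that $K_i\delta_{ij}-\delta_{ij}K_i^{-1}$ is the right numerator. For the inductive step, write $w=w'F_{i_n}$ with $w'$ of degree $n-1$, and expand $[E_i,w'F_{i_n}]=[E_i,w']F_{i_n}+w'[E_i,F_{i_n}]$; but one must be careful that $[E_i,w']$ does not commute past $F_{i_n}$ trivially — rather one uses that $E_i$ is $(1,K_i'^{-1})=(1,K_i^{-1})$-skew-primitive, so the Leibniz-type rule in the double reads $[E_i, xy] = [E_i,x]\,y + (\text{something involving }K_i^{-1}x K_i)\,[E_i,y]$ with the appropriate $K$-conjugation. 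Concretely, the commutation relation $K_i F_j K_i^{-1} = q_{ij}^{-1} F_j$ and the recursion already recorded in the paper,
$$\p_i^R(v_{i_1}\cdots v_{i_n})=q_{i,i_n}\,\p_i^R(v_{i_1}\cdots v_{i_{n-1}})v_{i_n}+v_{i_1}\cdots v_{i_{n-1}}\,\p_i^R(v_{i_n}),$$
and its bar-twisted analogue for ${\dd}_i^R$, are exactly the identities one needs to match the two sides after collecting the $q_{i,i_n}$ and $q_{i,i_n}^{-1}$ factors that come out of conjugating $K_i^{\pm1}$ past $F_{i_n}$. The equality of the two displayed expressions for the numerator then reduces to the identity $\overline{\p_i^R(w)}={\dd}_i^R(w)$ combined with $K_i\p_i^L(w)=\overline{\p_i^R(w)}K_i$ on monomials, i.e. to checking that $\p_i^L$ and $\p_i^R$ differ by exactly the braiding scalar that the bar involution sends to its inverse — which, because the braiding matrix $(q^{\overline{c}_{ij}})$ is symmetric, is precisely the content one gets from comparing the left and right Leibniz recursions.

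The closure statement, that ${\dd}_i^R(w)K_i-\p_i^R(w)K_i^{-1}$ divided by $q-q^{-1}$ lies in $T_{\mathcal{A}_1}^{<0}(\overline{C})$ rather than merely in $T_q^{<0}$ tensored with the torus, then follows because $[E_i,w]$ a priori lives in the $\mathcal{A}_1$-subalgebra generated by the $E$'s, $F$'s, $K^{\pm1}$'s and $[K_i;0]$'s — but degree considerations in the $\mathbb{Z}^N$-grading force the result of $[E_i,\cdot]$ applied to an element of $T_{\mathcal{A}_1}^{<0}$ to have no $E$-component, and the explicit formula shows the $K$'s appear only through conjugation on something already in $T_{\mathcal{A}_1}^{<0}$; the only subtlety is the division by $q-q^{-1}$, which is handled exactly as for $[K_i;0]$, since the numerator is manifestly divisible by $K_i-K_i^{-1}$-type combinations after one pushes the $K_i^{\pm1}$ to one side.

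\textbf{Main obstacle.} I expect the bookkeeping in the inductive step to be the genuine difficulty: one has to be scrupulous about \emph{which} $K$-conjugation appears when commuting $E_i$ past a product, and to verify that the scalars $q_{i,i_n}=q^{\overline{c}_{i i_n}}$ produced by these conjugations are exactly absorbed by the coefficients in the recursions for $\p_i^R$ and ${\dd}_i^R$, using symmetry of $\overline{C}$ in the crucial place. Once the correct skew-derivation identity for $[E_i,-]$ on $T_q^{<0}$ is pinned down, matching it termwise against the two differential-operator recursions is a routine (if slightly tedious) verification; the conceptual risk is entirely in getting the $K$-twists and the bar-twist consistent.
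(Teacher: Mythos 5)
Your plan is correct and coincides with the route the paper itself indicates: the paper offers no detailed argument, remarking only that the formula ``can be proved either by induction or by verifying directly on a monomial'' and that the symmetry of the braiding matrix is necessary. Your induction on monomial length, using the Leibniz recursions for $\p_i^R$ and ${\dd}_i^R$ and isolating the symmetry of $\overline{C}$ as the point where $K_i\p_i^L(w)={\dd}_i^R(w)K_i$ is verified, is exactly that argument made explicit.
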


This formula can be proved either by induction or by verifying directly on a monomial; notice that the symmetric condition on the braiding matrix is necessary.
\par
Recall that $\widetilde{\s}:T_q(\overline{C})\ra U(\widetilde{\g}(\overline{C}))$ is the specialization map.

\begin{lemma}
Let $w\in T_{\mathcal{A}_1}^{<0}(\overline{C})$ be a right constant of degree $n$. Then $\widetilde{\s}(w)\in U(\mathfrak{r}_-)$.
\end{lemma}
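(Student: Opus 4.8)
The plan is to show that a right constant $w$ of degree $n$ in $T_{\mathcal{A}_1}^{<0}(\overline{C})$ specializes into $U(\mathfrak{r}_-)$, by using the characterization of $\mathfrak{r}_-$ provided by Theorem \ref{Thm:KM}: an element $x\in U(\widetilde{\mathfrak{n}}_-)$ lies in $U(\mathfrak{r}_-)$ as soon as $[e_i,x]\in U(\mathfrak{r}_-)$ for every $i\in I$. So I would argue by induction on the degree $n$ of $w$, the key being that the right differential operators $\p_i^R$ behave well under specialization and that, by the previous lemma, $[E_i,w]$ is controlled by $\p_i^R(w)$ and $\dd_i^R(w)$.

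First I would record the base case. The smallest degree in which a nonzero right constant occurs is $n=2$: by Example \ref{Ex:deg2} (together with Proposition \ref{Prop:deg2}), $\Con_2^R$ is spanned by the elements $F_sF_t-q_{st}F_tF_s$ with $q_{st}q_{ts}=1$; under the symmetric-matrix hypothesis this forces $q_{st}=\pm 1$, and in the relevant $q^{\overline{c}_{st}}$ situation one checks $q_{st}=1$, so $w$ specializes to $[f_s,f_t]$. Since the quantized Serre-type relation of degree $2$ reflects $\overline{c}_{st}=0$, hence $c_{st}=c_{ts}=0$, the bracket $[f_s,f_t]$ vanishes in $\mathfrak{n}_-(\overline{C})$, i.e.\ lies in $\mathfrak{r}_-$. (More uniformly: degree-$2$ constants are precisely the genuine relations already present in $U(\mathfrak{n}_-)$, so their specializations are $0$.)

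For the inductive step, take $w$ a right constant of degree $n\ge 3$, so $T_n w=0$ and hence, by Lemma \ref{Lem5.2}, $\p_i^R(w)=0$ for all $i\in I$. By the lemma just above the statement,
\begin{equation*}
[E_i,w]=\frac{{\dd}_i^R(w)K_i-\p_i^R(w)K_i^{-1}}{q-q^{-1}}=\frac{{\dd}_i^R(w)K_i}{q-q^{-1}},
\end{equation*}
which is an element of $T_{\mathcal{A}_1}^{<0}(\overline{C})$ times $K_i$. Applying the specialization map $\widetilde{\s}$ and using $\widetilde{\s}(K_i)=1$ together with the derivation property, $[e_i,\widetilde{\s}(w)]=\widetilde{\s}([E_i,w])$ equals the specialization of $\dd_i^R(w)/(q-q^{-1})$. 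Now I would invoke the fact (proved in Section \ref{Sec:Bar}) that $\dd_i^R$ sends $\mathfrak{I}(V)$ to $\mathfrak{I}(V)$ and, more to the point here, that $\dd_i^R$ lowers degree and sends right constants to right constants of degree $n-1$ — this is exactly Lemma \ref{Lemma:comm}(1) combined with the coideal property of $\Con^R_{\leq m}$ together with the commutation relation $\p_j^R{\dd}_i^R=q_{ij}^{-1}{\dd}_i^R\p_j^R$ of Lemma \ref{Lem:comm}, which shows $\p_j^R\dd_i^R(w)=0$ for all $j$ whenever $\p_j^R(w)=0$. Thus $\dd_i^R(w)$ is a right constant of degree $n-1$, the factor $1/(q-q^{-1})$ is harmless after checking integrality over $\mathcal{A}_1$ (the relevant differences of the form $F_*F_*-q^{\pm k}F_*F_*$ are divisible by $q-q^{-1}$ because the exponents come in symmetric pairs — this is where the averaging is used), and by the induction hypothesis its specialization lies in $U(\mathfrak{r}_-)$. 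Hence $[e_i,\widetilde{\s}(w)]\in U(\mathfrak{r}_-)$ for every $i$, and Theorem \ref{Thm:KM} gives $\widetilde{\s}(w)\in U(\mathfrak{r}_-)$.

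The main obstacle I expect is the integrality/divisibility bookkeeping: one must make sure that $\dd_i^R(w)/(q-q^{-1})$ genuinely lies in $T_{\mathcal{A}_1}^{<0}(\overline{C})$ so that applying $\widetilde{\s}$ makes sense and matches $[e_i,\widetilde{\s}(w)]$, rather than merely in $T_q^{<0}(\overline{C})$. This is precisely the place where the symmetry of $\overline{C}$ (equivalently, the hypothesis $q_{ij}=q_{ji}$ and that $\theta_n$ acts as the identity on the $\mathcal{A}_1$-span of the relevant $\mathbb{K}[X_{\underline{i}}]$) is indispensable: it guarantees the coefficients of $\dd_i^R(w)$ are, up to the overall denominator, honest elements of $\mathcal{A}_1$. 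Once this divisibility is in hand, the inductive argument runs smoothly and, combined with the surjectivity of $\s:T_{\mathcal{A}_1}(\overline{C})\ra U(\g(\overline{C}))$ already established and the fact (Theorem \ref{Relr}) that pre-relations — in particular all right constants — generate the defining ideal, yields that $\s$ descends to the desired surjection $U_{\mathcal{A}_1}(C)\ra U(\g(\overline{C}))$.
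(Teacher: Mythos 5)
Your proof is correct and follows essentially the same route as the paper's: induction on the degree, reduction via Theorem \ref{Thm:KM} to showing $[e_i,\wt{\s}(w)]\in U(\mathfrak{r}_-)$, and the commutator formula for $[E_i,w]$ with the $\p_i^R(w)$ term vanishing because $w$ is a right constant. The only cosmetic difference is that you keep the ${\dd}_i^R(w)K_i$ form of that formula and invoke Lemma \ref{Lem:comm} to see that ${\dd}_i^R(w)$ is again a right constant, whereas the paper uses the equivalent $K_i\p_i^L(w)$ form together with the commutativity of $\p_i^L$ and $\p_j^R$; your explicit attention to the $\mathcal{A}_1$-integrality of the quotient by $q-q^{-1}$ flags a point the paper passes over silently.
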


\begin{proof}
By Theorem \ref{Thm:KM}, to show $\widetilde{\s}(w)\in U(\mathfrak{r}_-)$, it suffices to verify that for any $i\in I$, $$\wt{\s}([E_i,w])=[e_i,\wt{\s}(w)]\in U(\mathfrak{r}_-).$$
We apply induction on the degree $n$ of the right constant $w$.
\par
The case $n=2$ is clear since all constants of degree $2$ are computed in Proposition \ref{Prop:deg2}. For general $n\geq 3$, by the above lemma and the fact that $\p_i^R(w)=0$ for any $i\in I$,
$$[E_i,w]=K_i\p_i^L\left(\frac{w}{q-q^{-1}}\right)\in T_{\mathcal{A}_1}^{<0}(\overline{C}).$$
Since $\p_i^L$ and $\p_i^R$ commute, $\p_i^L(w)$ is annihilated by $T_{n-1}$ and of degree at most $n-1$. By induction hypothesis, $\p_i^L(\frac{w}{q-q^{-1}})$ is specialized to $U(\mathfrak{r}_-)$, hence
$$\wt{\s}([E_i,w])=\wt{\s}\left(K_i\p_i^L\left(\frac{w}{q-q^{-1}}\right)\right)\in U(\mathfrak{r}_-).$$
\end{proof}

\begin{proof}[Proof of theorem]
We have proved in the above lemma that right constants are specialized to $U(\mathfrak{r}_-)$ under $\widetilde{\s}$. A similar argument can be applied to left constants to show that their specializations are in $U(\mathfrak{r}_+)$. We obtain therefore a well-defined algebra map $\s:U_{\mathcal{A}_1}(C)\ra U(\g(\overline{C}))$ and the surjectivity is clear.
\end{proof}

\subsection{Specialization (III): the Nichols algebra case.}
Let $C$ be a generalized Cartan matrix, $A=(q^{c_{ij}})_{1\leq i,j\leq N}$ and $D^{<0}(C)$ be the Nichols algebra of braiding matrix $A$ with respect to a basis $F_1,\cdots,F_N$, which is the subalgebra of $D^{\leq 0}(A)$ generated by $F_i$ for $i\in I$.

\begin{theorem}
There exists a surjective algebra morphism $\vp:D^{<0}(C)\ra U(\mathfrak{n}_-(\overline{C}))$ sending $v_i$ to $f_i$.
\end{theorem}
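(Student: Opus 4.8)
The plan is to transport the specialization map of Theorem \ref{Thm:3} from the averaged matrix back to $C$, using the Andruskiewitsch--Schneider isomorphism of Proposition \ref{Prop:AS}. By construction $D^{<0}(C)$ is the Nichols algebra $\mathfrak{N}(V_A)$ of diagonal type attached to $C$ (its defining relations being the right pre-relations, cf. Theorem \ref{Relr}), while $N_q^{<0}(\overline{C})=U_q^{<0}(C)$ is the Nichols algebra $\mathfrak{N}(V_{A'})$ attached to $\overline{C}$ by the same recipe. Since $\overline{C}$ is symmetric, $c_{ij}+c_{ji}=\overline{c}_{ij}+\overline{c}_{ji}$ for all $i,j$, so the braiding matrices $A=(q_{ij})$ and $A'=(q_{ij}')$ satisfy $q_{ij}q_{ji}=q_{ij}'q_{ji}'$. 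Hence Proposition \ref{Prop:AS} applies and furnishes a linear isomorphism $\psi\colon\mathfrak{N}(V_A)\ra\mathfrak{N}(V_{A'})$ with $\psi(v_i)=v_i'$. I would then set $\vp:=\s\circ\psi$, where $\s\colon N_{q,\mathcal{A}_1}^{<0}(\overline{C})\ra U(\mathfrak{n}_-(\overline{C}))$ is the restriction to negative parts of the specialization map of Theorem \ref{Thm:3}; this restriction is surjective, since $\s(F_i)=f_i$ and the $f_i$ generate $U(\mathfrak{n}_-(\overline{C}))$.

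Two things must be checked before this makes sense. First, integrality: exactly as in the discussion preceding Theorem \ref{Thm:3} (the operators $S_n$ are defined over $\mathcal{A}_1$, which is a discrete valuation ring), $D^{<0}(C)$ and $N_q^{<0}(\overline{C})$ carry $\mathcal{A}_1$-forms $D^{<0}_{\mathcal{A}_1}(C)$ and $N_{q,\mathcal{A}_1}^{<0}(\overline{C})$, free over $\mathcal{A}_1$ in each degree; and on a monomial $v_{i_1}\cdots v_{i_n}$ the map $\psi$ acts by multiplication by a product of the scalars $q_{kl}(q_{kl}')^{-1}=q^{(c_{kl}-c_{lk})/2}$, each a power of $q^{1/2}$ and hence a unit of $\mathcal{A}_1$ (no power of $q^{1/2}$ lies in the maximal ideal $(q^{1/2}-1)$). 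Thus $\psi$ restricts to an isomorphism of $\mathcal{A}_1$-modules $D^{<0}_{\mathcal{A}_1}(C)\ra N_{q,\mathcal{A}_1}^{<0}(\overline{C})$. Second, the defect from multiplicativity: by Proposition \ref{Prop:AS}(2), extended along the monomial basis, for homogeneous $x,y$ of weights $\mu,\nu\in\mz^N$ one has $\psi(xy)=\lambda(\mu,\nu)\,\psi(x)\psi(y)$, where $(\mu,\nu)\mapsto\lambda(\mu,\nu)$ is a bicharacter whose values are products of the scalars $q^{\pm(c_{kl}-c_{lk})/2}$ (it is precisely the hypothesis $q_{ij}q_{ji}=q_{ij}'q_{ji}'$ that makes $\lambda$ well defined); in particular $\lambda(\mu,\nu)\in\mathcal{A}_1^\times$ and $\s(\lambda(\mu,\nu))=1$.

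Granting this, $\vp=\s\circ\psi$ is an algebra morphism: for homogeneous $x,y$,
$$\vp(xy)=\s\big(\lambda(\deg x,\deg y)\,\psi(x)\psi(y)\big)=\s(\lambda(\deg x,\deg y))\,\vp(x)\vp(y)=\vp(x)\vp(y),$$
since $\s$ is multiplicative and kills the twisting scalar, and this extends by $\mathcal{A}_1$-linearity. It is surjective because $\psi$ is an isomorphism and $\s$ is onto $U(\mathfrak{n}_-(\overline{C}))$, and $\vp(v_i)=\s(v_i')=f_i$. If one prefers a morphism out of $D^{<0}(C)$ itself rather than its $\mathcal{A}_1$-form, one reads $\vp$ off after applying $-\ts_{\mathcal{A}_1}\mc$, exactly as in Theorem \ref{Thm:3}.

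The \emph{main obstacle} is the second half of the middle paragraph: making precise, from Proposition \ref{Prop:AS} (equivalently, from the $2$-cocycle description of $\psi$ in \cite{AS02}), that the failure of $\psi$ to be multiplicative is governed by a bicharacter whose values lie in $\mathcal{A}_1^\times$ and specialize to $1$ -- this is what forces $\s$ to kill the discrepancy and hence makes $\vp$ an algebra map rather than merely a linear one. Once this is in hand, the remaining points -- integrality of the $\mathcal{A}_1$-forms and surjectivity of $\vp$ -- are routine.
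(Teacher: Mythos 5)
Your proposal is correct and follows essentially the same route as the paper: compose the Andruskiewitsch--Schneider linear isomorphism $\psi$ of Proposition \ref{Prop:AS} with the (restriction of the) specialization map $\s$ of Theorem \ref{Thm:3}, and observe that the twisting scalars, being powers of $q^{1/2}$, are units of $\mathcal{A}_1$ that specialize to $1$. In fact you are more careful than the paper, which only verifies multiplicativity on products of two generators $F_iF_j$; your bicharacter formulation of the defect of $\psi$ from multiplicativity is exactly what is needed to extend that check to arbitrary products.
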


\begin{proof}
We let $D^{<0}(\overline{C})$ denote the Nichols algebra of diagonal type of braiding matrix $(q^{\overline{c}_{ij}})_{1\leq i,j\leq N}$ with respect to basis $w_1,\cdots,w_N$. By Proposition \ref{Prop:AS}, there exists a linear isomorphism $\psi:D^{<0}(C)\ra D^{<0}(\overline{C})$ sending $F_i$ to $w_i$. Composing with the restriction of the specialization map $\s$ to the negative part of $U_q(C)$ gives linear surjection $\vp:D^{<0}(C)\ra U(\mathfrak{n}_-(\overline{C}))$.
\par
It remains to show that $\vp$ is an algebra morphism: for $1\leq i,j\leq N$, when $i\leq j$, 
$$\vp(F_iF_j)=\s\circ\psi(F_iF_j)=\s(q^{\frac{1}{2}(c_{ji}-c_{ij})}w_iw_j)=f_if_j;$$
if $i>j$,
$$\vp(F_iF_j)=\s\circ\psi(F_iF_j)=\s(w_iw_j)=f_if_j.$$
\end{proof}

\section{Application}\label{Sec5.8}

It is natural to ask for the size of $\Rel_r$, we will relate it to the integral points of some quadratic forms.

\subsection{General calculation}
Let $A=(a_{ij})_{1\leq i,j\leq N}\in M_N(\mathbb{Z})$ be a generalized Cartan matrix. We consider the element $v_{\underline{i}}$ for $\underline{i}=(1^{m_1},\cdots,N^{m_N})$: the action of the central element $\theta_m$ where $m=m_1+\cdots+m_N$ gives
$$\theta_m(v_{\underline{i}})=q^\lambda v_{\underline{i}}$$
where
$$\lambda=\sum_{k=1}^N 2m_k(m_k-1)-\sum_{p=1}^N\sum_{q<p}(a_{pq}+a_{qp})m_pm_q.$$
So there exists a pre-relation in $\mc[X_{\underline{i}}]$ only if $\lambda=0$. To find these pre-relations, it suffices to consider the integral solutions of this quadratic form.

\subsection{Study of the quadratic form}
The above computation motivates the study the following quadratic forms:
$$Q(x_1,\cdots,x_n)=\sum_{i=1}^n x_i^2-\sum_{i<j}b_{ij}x_ix_j,$$
$$S(x_1,\cdots,x_n)=\sum_{i=1}^n(x_i-1)^2,$$
where $b_{ij}=a_{ij}+a_{ji}$ are non-negative integers as in the last subsection. 
\par
Let $m\leq n$ be an integer (not necessary positive) and $C_m$ be the intersection of the following two varieties 
$$Q(x_1,\cdots,x_n)=m,\,\,\, S(x_1,\cdots,x_n)=n-m.$$
Let $E(C_m)$ be the set of integral points on $C_m$ and $E=\bigcup_{m\leq n}E(C_m)$. Then the set of all integral solutions of $\lambda=0$ is the same as $E$.

\begin{proposition}
If the quadratic form $Q(x_1,\cdots,x_n)$ is positive semi-definite, $E$ is a finite set.
\end{proposition}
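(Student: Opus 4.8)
The plan is to show that under the semi-positive definiteness hypothesis, the set $E$ is bounded, hence finite since it consists of integral points. I would first reduce to controlling the two defining equations $Q(x_1,\dots,x_n)=m$ and $S(x_1,\dots,x_n)=n-m$ simultaneously. The key observation is that $S(x_1,\dots,x_n)=\sum_i(x_i-1)^2$ is a positive definite form centered at the point $(1,\dots,1)$, so for each fixed $m$ the condition $S(\underline{x})=n-m$ forces $\underline{x}$ to lie on a Euclidean sphere of radius $\sqrt{n-m}$ around $(1,\dots,1)$; in particular $m\leq n$ is automatic for a real solution, and $E(C_m)=\emptyset$ unless $n-m\geq 0$. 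So the union $E=\bigcup_{m\leq n}E(C_m)$ is really indexed by $0\leq n-m$, and one must still rule out arbitrarily large $n-m$.

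The crucial step is to use semi-positive definiteness of $Q$ to bound $n-m$ from above. Since $Q$ is semi-positive definite, $Q(\underline{x})\geq 0$ for all real $\underline{x}$, so any point of $C_m$ has $m=Q(\underline{x})\geq 0$; combined with $m\leq n$ this already confines $m$ to the finite range $0\leq m\leq n$. Hence $E=\bigcup_{m=0}^{n}E(C_m)$ is a finite union, and it suffices to prove each $E(C_m)$ is finite. But $E(C_m)$ is contained in the sphere $S(\underline{x})=n-m$, which is a compact subset of $\mathbb{R}^n$; a compact set contains only finitely many integral points. Therefore each $E(C_m)$ is finite, and so is $E$.

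The main obstacle, and the point deserving care, is the interplay between the range of $m$ and the compactness argument: one must be sure that the indexing set of $m$ is genuinely finite before invoking compactness of each level set, since a priori $m$ ranges over all integers $\leq n$. This is exactly where semi-positive definiteness of $Q$ is indispensable — without it $m=Q(\underline{x})$ could be an arbitrarily large negative integer, the corresponding sphere $S(\underline{x})=n-m$ would have arbitrarily large radius, and $E$ could be infinite. I would therefore structure the write-up as: (i) $S$ positive definite $\Rightarrow$ each level set is compact $\Rightarrow$ finitely many lattice points per level; (ii) $Q$ semi-positive definite $\Rightarrow$ $m\geq 0$ on $C_m$, hence only finitely many nonempty levels; (iii) conclude $E$ is a finite union of finite sets. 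One should also note in passing that the identification of $E$ with the integral solutions of $\lambda=0$ was established in the previous subsection, so no further work is needed there.
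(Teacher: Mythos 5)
Your proof is correct and follows essentially the same route as the paper: semi-positive definiteness of $Q$ forces $0\leq m\leq n$ so that only finitely many levels $C_m$ are nonempty, and each level lies on the compact sphere $S(\underline{x})=n-m$, which contains only finitely many lattice points. Your write-up simply makes explicit the step $m=Q(\underline{x})\geq 0$ that the paper leaves implicit.
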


\begin{proof}[Proof]
If $Q(x_1,\cdots,x_n)$ is semi-positive definite, $E$ is a finite union of $E(C_m)$ for $0\leq m\leq n$. For each $m$, as $S(x_1,\cdots,x_n)=n-m$ is compact, so is its intersection with $Q(x_1,\cdots,x_n)=m$. The finiteness of $E(C_m)$ and $E$ are clear. 
\end{proof}

\begin{corollary}
If the quadratic form $Q(x_1,\cdots,x_n)$ is semi-positive definite, the defining ideal $\mathfrak{I}(V)$ is finitely generated.
\end{corollary}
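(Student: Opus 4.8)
The plan is to combine the preceding Proposition on the finiteness of $E$ with Theorem \ref{Relr}, which asserts that the Hopf ideal generated by $\Rel_r$ is exactly $\mathfrak{I}(V)$. First I would observe that by Corollary \ref{Cor:theta}, every right pre-relation of degree $n$ lies in $\mathcal{H}=(V^{\ts n})^{\theta_n}$, hence in some $\mc[X_{\underline{i}}]$ with $\theta_n(v_{\underline{i}})=v_{\underline{i}}$; equivalently, writing $\underline{i}=(1^{m_1},\cdots,N^{m_N})$, the scalar $\lambda=\lambda(\underline{i})$ computed in the General calculation subsection must vanish. So $\Rel_r=\bigoplus_{\underline{i}}\bigl(\mc[X_{\underline{i}}]\cap\Rel_r\bigr)$, the sum being over those $\underline{i}$ with $\lambda(\underline{i})=0$, and the multidegrees of such $\underline{i}$ are precisely the points of $E$ (viewing $(m_1,\dots,m_N)$ as an element of $\mathbb{N}^N\subset\mathbb{Z}^N$ with $x_k=m_k$).

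Next I would invoke the hypothesis that $Q$ is semi-positive definite: by the Proposition just proved, $E$ is a finite set. Since each $\mc[X_{\underline{i}}]=\mc[\fb_n].v_{\underline{i}}$ is a finite-dimensional $\mc$-vector space (it is a subspace of $V^{\ts n}$ for $n=\sum m_k$), the subspace $\mc[X_{\underline{i}}]\cap\Rel_r$ is finite-dimensional, and the direct sum over the finitely many relevant $\underline{i}$ is therefore a finite-dimensional $\mc$-vector space. Pick any finite $\mc$-basis $\{r_1,\dots,r_t\}$ of $\Rel_r$.

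Finally, by Theorem \ref{Relr} the Hopf ideal generated by $\Rel_r$ equals $\mathfrak{I}(V)$; since $\Rel_r=\Span_{\mc}\{r_1,\dots,r_t\}$, the Hopf ideal generated by the finite set $\{r_1,\dots,r_t\}$ is already all of $\mathfrak{I}(V)$, so $\mathfrak{I}(V)$ is finitely generated as a Hopf ideal (and a fortiori the corresponding ideal of $T(V)$ obtained after bosonization is finitely generated in the usual sense). I do not expect any serious obstacle here: the only point requiring a moment's care is the bookkeeping identification between integral solutions of $\lambda=0$ and the set $E$, which is exactly the content of the sentence preceding the statement, together with noting that the constraint $\lambda=0$ with $Q$ semi-positive forces the relevant $\underline{i}$ into a bounded region so that only finitely many $\mc[X_{\underline{i}}]$ contribute. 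All the heavy lifting — that pre-relations generate $\mathfrak{I}(V)$, and that $E$ is finite — has already been done.
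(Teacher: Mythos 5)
Your proposal is correct and follows essentially the same route as the paper: the paper's (very terse) proof likewise combines the finiteness of $E$ with the finite-dimensionality of each $\mc[X_{\underline{i}}]$ to conclude that only finitely many pre-relations are needed, with Theorem \ref{Relr} supplying the generation statement. You have merely made explicit the intermediate steps (Corollary \ref{Cor:theta} placing pre-relations inside $\mathcal{H}$, and the identification of the relevant multidegrees with points of $E$) that the paper leaves implicit.
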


\begin{proof}[Proof]
By above proposition, there are only a finite number of indices $\underline{i}$ such that $\mc[X_{\underline{i}}]$ contains right pre-relations; moreover, each $\mc[X_{\underline{i}}]$ is finite dimensional. 
\end{proof}

\end{document}